\pgfplotsset{compat=1.11}
\theoremstyle{plain}
\newtheorem{theorem}{Theorem}[section]
\newtheorem{corollary}[theorem]{Corollary}
\newtheorem{proposition}[theorem]{Proposition}
\newtheorem{lemma}[theorem]{Lemma}
\theoremstyle{definition}
\newtheorem{definition}[theorem]{Definition}
\newtheorem{remark}[theorem]{Remark}
\newtheorem*{theorem*}{Theorem}
\newtheorem*{definition*}{Definition}
\newtheorem*{corollary*}{Corollary}
\newtheorem*{remark*}{Remark}
\newtheorem*{thm*}{Theorem}
\newtheorem*{conjecture*}{Conjecture}
\newcommand{\definedas}{\mathrel{\raise.095ex\hbox{\rm :}\mkern-5.2mu=}}
\newcommand{\asdefined}{\mathrel{=\mkern-5.2mu\raise.095ex\hbox{\rm :}}}
\newcommand{\R}{\mathbb{R}}
\newcommand{\diver}{\operatorname{div}}
\newcommand{\tr}{\operatorname{tr}}
\newcounter{flabelcounter}
\title[]{Foliations by Critical Surfaces of the Hawking Energy in Asymptotically Flat Initial Data Sets}
\author[Pe\~nuela Diaz]{Alejandro Pe\~nuela Diaz}
\address{ University of Potsdam, 14476 Potsdam, Germany}
\email{alejandro.penuela.diaz@uni-potsdam.de}
\begin{document}
\begin{abstract}
Area-constrained critical surfaces for the Hawking quasi-local energy (“Hawking surfaces”) provide a natural setting for that energy: they enjoy positivity and rigidity properties, as shown in earlier work \cite{rigidiaz}. We construct large-scale foliations at infinity by Hawking surfaces in asymptotically Schwarzschild initial data sets. Using a Lyapunov–Schmidt reduction within the Willmore foliation framework of Eichmair–Koerber \cite{eichko}, we prove existence and uniqueness of the foliation and study its coordinate center. Under the dominant energy condition, we show that along the leaves of the foliation, the Hawking energy is positive and converges to the ADM energy in the large-sphere limit; moreover, subject to an explicit integral constraint, it is monotone along the foliation. Under weaker assumptions we construct an on–center family of Hawking surfaces that, while not necessarily a foliation, still enjoys positivity and the large–sphere limit. Finally, we obtain a rigidity statement and verify that our hypotheses hold in a broad class of data, initial data sets with harmonic or York asymptotics, thereby demonstrating the robustness of Hawking surfaces as a quasi-local energy tool in dynamical spacetimes. 
\end{abstract}
\maketitle
\section{Introduction and Main Results}

A fundamental problem in classical general relativity is the search for a good definition of quasi-local energy. While for an isolated system (asymptotically flat), we can measure the total energy of that system through the ADM energy, there is no canonical way to assign an energy or mass content to an arbitrary bounded region. For example, given a closed 2-sphere $\Sigma$ sitting inside an initial data slice (spacelike manifold), one cannot directly read off how much mass or energy lies in the enclosed volume. Mathematically, this problem becomes one of finding a functional, built only from the intrinsic and extrinsic geometry of $\Sigma$, that exhibits the key physical properties one expects of energy.

Over the years many definitions of quasi-local energy have been proposed (see \cite{Living} for a comprehensive review), each with distinct strengths and limitations. To be physically acceptable, these definitions must satisfy certain physical conditions, particularly under the dominant energy condition, a realistic model for matter. These conditions include:
\begin{enumerate}[label=(\arabic*), ref=(\arabic*)]
    \item\label{item:nonneg}  Nonnegativity: The energy measure must always be nonnegative.

     \item\label{item:rigidity} Rigidity: The energy measure should vanish if and only if the enclosed region is flat. This ensures that quasi-local energy distinguishes between flat and curved spacetimes. 

     \item\label{item:monotone} Monotonicity: The quasi-local energy should increase monotonically when evaluated on a sequence of expanding spheres.

    \item\label{item:large} Large-sphere limit:  As the quasi-local energy is evaluated on increasingly large spheres, it should asymptotically approach the ADM energy, representing the total energy of the system. 

    \item\label{item:small} Small-sphere limit:  On spheres shrinking to a point along a light cone, the quasi‐local energy (after rescaling) should converge to the Einstein tensor evaluated on the  timelike direction defining the cut. This reflects the fact that it captures the local energy density.
\end{enumerate}

These requirements translate into precise mathematical conditions that any viable quasi-local energy functional must satisfy, ensuring both rigorous consistency and genuine physical relevance.

In this paper we work in the initial data set setting, this means that we consider a smooth $3$-dimensional Riemannian manifold $(M,g)$, equipped with a symmetric $2$-tensor $k$, we refer to the triple $(M,g,k)$ as an initial data set. In the context of general relativity, an initial data set represents a spacelike hypersurface with second fundamental form $k$ in a 4-dimensional spacetime (Lorentzian manifold).

Among quasi-local candidates, a prominent one is the one described by Hawking in \cite{Hawma}, the so-called \emph{Hawking energy},  which can be written for a surface $\Sigma \subset M $ as
\begin{equation}\label{hawkingmass2.intro}
    \mathcal{E}(\Sigma) = \sqrt{\frac{|\Sigma|}{16 \pi}} \left( 1- \frac{1}{16 \pi} \int_\Sigma H^2 - P^2 d\mu \right),
\end{equation}
where $H$ is the mean curvature of the surface $\Sigma$ and   $P=\tr_{g_{\Sigma}} k$ is the trace of the tensor $k$ with respect to the metric induced on $\Sigma$, that is  $P= \tr_\Sigma k= \tr k -k(\nu,\nu)$, where $\nu$ is the outward  normal to $\Sigma$ in $M$.

The Hawking energy is a natural and relatively simple quasi-local energy functional: it captures correct local behavior (e.g., the small‐sphere limit, as shown by Horowitz and Schmidt \cite{Haw},  item  \ref{item:small}   of the list above) but fails to be nonnegative on arbitrary surfaces. For instance, in Euclidean space  every non-round 2-sphere has strictly negative Hawking energy, so one must single out a distinguished family of surfaces if one wants positivity.

The first landmark result in this direction is due to Christodoulou–Yau \cite{Chriyau}, who proved that, in the time‐symmetric ($k=0$) setting under the dominant energy condition, the Hawking energy is nonnegative on stable constant mean curvature spheres. This suggests seeking a canonical class of surfaces, beyond the time-symmetric case, on which positivity (and ultimately rigidity, monotonicity, and the large-sphere limit) can be restored.

A natural approach is to adopt a variational framework: for fixed area,  studying (\ref{hawkingmass2.intro}) is equivalent to studying the Hawking functional 
\begin{equation}\label{hawfun}
    \mathcal{H}(\Sigma)= \frac{1}{4} \int_\Sigma H^2 - P^2 d\mu
\end{equation}
We consider area-constrained critical surfaces of this functional. In case $k=0$, in a totally geodesic hypersurface, the Hawking functional reduces to the Willmore functional 
\begin{equation*}
    \mathcal{W}(\Sigma)= \frac{1}{4} \int_\Sigma H^2  d\mu 
\end{equation*}
and the critical surfaces of this functional subject to the constraint that $|\Sigma| $ be fixed are the area-constrained Willmore surfaces which we call here for simplicity just \emph{Willmore surfaces}. These surfaces are characterized by the following Euler-Lagrange equation with the Lagrange parameter $\lambda$.
\begin{equation}\label{Willeq}
    0= \lambda H  +\Delta^\Sigma H + H|\mathring{B}|^2+ H \mathrm{Ric}(\nu, \nu), 
\end{equation}
where $\mathring{B}$   is the traceless part of the second
fundamental form $B$ of $\Sigma$ in $M$, that is $ \mathring{B} = B- \frac{1}{2} H g_\Sigma$ with norm  $|\mathring{B}|^2 = \mathring{B}_{ij}\, g_\Sigma^{ip}\, g_\Sigma^{jq}\, \mathring{B}_{pq}$,  $\mathrm{Ric}$ is the Ricci curvature of $M$, $\nu$ is the outward normal to $\Sigma$  and $\Delta^\Sigma $ is the Laplace-Beltrami operator on $\Sigma$.

Area-constrained Willmore surfaces have been extensively studied, and in the context of general relativity, a key development was made in \cite{willflat} by Lamm, Metzger, and Schulze. They proved that an asymptotically Schwarzschild manifold admits a unique foliation near infinity by Willmore spheres—a “foliation at infinity’’ covering the end outside a compact set. In their framework, these surfaces are singled out as optimal for evaluating the Hawking energy since under nonnegative scalar curvature (dominant energy condition for $k=0$), the Hawking energy is nonnegative on Willmore surfaces, and it is monotonically nondecreasing along the foliation toward infinity.

Koerber \cite{Thomas} further showed that the leaves of the Willmore foliation are strict local area-preserving maximizers of the Hawking energy. Later, Eichmair and Koerber refined the construction of the foliation by Willmore spheres at infinity in~\cite{eichko}, employing a different approach based on a Lyapunov--Schmidt reduction procedure, a method that we will also use in this paper. Moreover, in~\cite{willcen}, they investigated the notion of center of mass naturally associated with this foliation.

In \cite{willflat} Lamm--Metzger--Schulze construct the foliation  through a direct perturbative analysis of large coordinate spheres in the Schwarzschild metric: the authors explicitly compute the first and second variations of the Willmore functional, derive a priori curvature and position estimates, and establish the invertibility of the linearized operator via comparison with the Schwarzschild case, allowing the use of the implicit function theorem to obtain existence and uniqueness under small perturbations of the metric. In contrast, Eichmair--Koerber \cite{eichko} use a more abstract functional-analytic approach based on Lyapunov--Schmidt reduction, treating the area-constrained Willmore equation as a finite-dimensional critical point problem. By perturbing large spheres $S_\lambda(\lambda \xi)$ to nearby approximate solutions and identifying true solutions as critical points of a reduced functional $G_\lambda(\xi)$, they avoid the need for smallness assumptions, requiring only mild asymptotic conditions on the scalar curvature. This method yields broader existence and optimal uniqueness results, extending the foliation theory to a more general class of asymptotically Schwarzschild manifolds.

In the dynamical setting ($k\neq 0$), the natural generalization of area-constrained Willmore surfaces is the area-constrained critical surfaces of the Hawking functional (\ref{hawfun}).   
\begin{definition}
We call area-constrained critical surfaces of the Hawking functional $ \int_\Sigma H^2 -P^2 d\mu$, \emph{Hawking surfaces}. These surfaces are characterized by the equation 
 \begin{equation}\label{eulag}
\begin{split}
   0=& \lambda H  +\Delta^\Sigma H + H|\mathring{B}|^2+ H \mathrm{Ric}(\nu, \nu)+P( \nabla_\nu \tr k - \nabla_\nu k(\nu,\nu )) - 2P \diver_\Sigma (k(\cdot, \nu))\\ & +\frac{1}{2}H P^2 - 2k (\nabla^\Sigma P, \nu )
    \end{split}
\end{equation}
for some real parameter $\lambda$.
\end{definition}
Previously studied in \cite{Alex, diaz2023local, rigidiaz}, these surfaces are well adapted to the Hawking energy: they have positivity and rigidity properties under the dominant energy condition \cite{rigidiaz} (items \ref{item:nonneg} and \ref{item:rigidity} of list above).

In the dynamical setting, local foliations by Hawking surfaces near a point were constructed in~\cite{diaz2023local}, where the small-sphere limit was also analyzed. At the other extreme, Friedrich’s Ph.D. thesis \cite{Friedrich2020}, building on the Willmore foliation framework of Lamm, Metzger, and Schulze \cite{willflat}, established large-scale foliations near infinity using the same perturbative and implicit-function-theorem method developed in their work.

Beyond monotonicity and the large-sphere limit, an important complementary question for foliations at infinity is whether such a foliation also recovers a natural center of mass. In the time–symmetric case, Huisken–Yau showed that the CMC foliation yields the Hamiltonian center of mass \cite{HY}, and Eichmair–Koerber under more restrictive assumptions proved the analogous statement for the Willmore foliation \cite{willcen}. This prompts the question: under what additional symmetry or decay conditions does a foliation by Hawking surfaces become asymptotically centered, and does its coordinate center coincide, in the dynamical setting, with the spacetime  (STCMC) center of mass introduced by Cederbaum and Sakovich \cite{STCMC}?

This paper has two goals: to extend the Eichmair–Koerber foliation \cite{eichko} to dynamical data, and to establish the main physical properties of the Hawking energy along the resulting surfaces. We construct global foliations by Hawking surfaces in asymptotically Schwarzschild ends and analyze their principal features: existence, uniqueness, and asymptotic center. Along these foliations, the Hawking energy is positive, satisfies the large–sphere limit, admits a rigidity statement, and under further technical hypotheses is monotone. Under weaker assumptions, we also construct an on–center family of Hawking surfaces, which is not a foliation in general, but still satisfies positivity and the large–sphere limit. Taken together with the positivity and rigidity theory of \cite{rigidiaz} and the small–sphere limit of \cite{Haw}, our results show that, subject to technical assumptions, the Hawking energy satisfies the five main physical criteria for a quasi-local energy: four on Hawking surfaces and the small-sphere limit along light-cone cuts.
\subsection{Main results}

We restrict our analysis to initial data sets $(M,g,k)$ and construct the surfaces in the asymptotically Schwarzschild setting. Note that these  manifolds  can be physically interpreted as models for isolated astrophysical
systems.

\textbf{Notation and asymptotic setting.} We  denote by $\delta$ and $\rho$ the Euclidean metric and  radial direction $\frac{x}{|x|}$ respectively. For functions \(f,g\), we write
\[
f(x)=\mathcal{O}(g(x))\quad(x\to\infty)
\]
if there are constants \(C,\hat{\delta}>0\) such that \(|f(x)|\le C\,|g(x)|\) for all \(x\ge\hat{\delta}\), and
\[
f(x)=o(g(x))\quad(x\to\infty)
\]
if \(\lim_{x\to\infty}\frac{f(x)}{g(x)}=0\)
(equivalently: for every \(\varepsilon>0\) there exists \(\hat{\delta}>0\) with \(|f(x)|\le\varepsilon\,|g(x)|\) for all \(x\ge\hat{\delta}\)).
 Also, \(f(x)\le \mathcal{O}(g(x))\) means that there exist \(C,\hat{\delta}>0\) such that \(f(x)\le C\,|g(x)|\) for all \(x\ge\hat{\delta}\).

We denote the even and odd parts of $f$ by
\[
f^{\mathrm{odd}}(x) := \tfrac{1}{2}(f(x) - f(-x)), \qquad
f^{\mathrm{even}}(x) := \tfrac{1}{2}(f(x) + f(-x)).
\]
and for tensors we take even/odd parts \emph{componentwise} in the chosen chart.  In what follows, all estimates are expressed in these coordinates, and all even/odd decompositions and $\mathcal{O}$–bounds are understood with respect to the asymptotically flat chart from the following definition.

\begin{definition}\label{asymptoticsc}
Let \((M,g,k)\) be a complete initial data set for the Einstein equations.  We say \((M,g,k)\) is \(\mathcal C^4\)-\emph{asymptotically Schwarzschild} of mass \(m>0\) if there exists a compact set \(K\subset M\) and a diffeomorphism
\[
\Phi: M\setminus K \;\longrightarrow\; \{\,x\in\R^3: |x|>R\}
\]
for some \(R>0\), such that in the pulled–back Cartesian coordinates \(x=(x^1,x^2,x^3)\) on \(\{\,|x|>R\}\) the metric and second fundamental form satisfy
\[
g_{ij}(x)
=\Bigl(1+\tfrac{m}{2|x|}\Bigr)^{4}\,\delta_{ij}
\;+\;\sigma_{ij}(x),
\qquad
\partial^J\sigma_{ij}(x)
=\mathcal{O}\bigl(|x|^{-2-|J|}\bigr)
\quad(|J|\le4),
\]
 and
\[
k_{ij}(x)
=\mathcal{O}\bigl(|x|^{-2}\bigr),
\qquad
\partial^J k_{ij}(x)
=\mathcal{O}\bigl(|x|^{-2-|J|}\bigr)
\quad(|J|\le2).
\]
where $\delta$ denotes the Euclidean metric and $\sigma$ is a $2$-tensor. We call \(K\) (or its image under \(\Phi\)) the \emph{interior region} of \(M\).\end{definition}
The foliations that we are considering are composed of on-center spheres.
\begin{definition}
   In an asymptotically Schwarzschild or  flat manifold, we  say that a surface $\Sigma \subset M $ is \emph{on-center} if it bounds a
compact region that contains the interior region of the manifold. 
\end{definition}
\noindent\textbf{Theorem \ref{exisfoli}.}
   \emph{Let $(M,g,k)$ be $ \mathcal{C}^4$-asymptotic to Schwarzschild with mass $m>0$. There exist constants $ \Tilde{\eta}_i>0$, i=1,2,3 such that if for $\eta_i< \Tilde{\eta}_i $ it holds 
\begin{equation}
    x^i \partial_i (|x|^2 \mathrm{Sc}) \leq \mathcal{O} (\eta_1 |x|^{-2}),\quad  \mathrm{Sc}^{\text{odd}} = \mathcal{O} (\eta_2 |x|^{-4})
\end{equation}
and 
\begin{equation}
k_{ij}(x) = \mathcal{O}(\eta_3 |x|^{-2}) , \quad (\nabla k)_{ij}(x)= \mathcal{O}(\eta_3 |x|^{-3}).
\end{equation}
 Then there exists a compact set \(K \subset M\), a number \(\lambda_0 > 0\),  and on-center stable area-constrained critical
spheres \(\Sigma(\lambda)\) of the Hawking functional  with parameter \(\lambda \in (0, \lambda_0)\), such that \(M \setminus K\) is foliated by the family
\(\{\Sigma(\lambda) : \lambda \in (0, \lambda_0)\}\).}

 We also obtain a uniqueness result for the previous foliation similar to the one obtained in \cite{eichko}.
 
\noindent\textbf{Theorem \ref{uniqueness}.} \emph{Under the same assumptions as the previous theorem. There exists a small constant \(\epsilon_0 > 0\) and a compact set
\(K \subset M\) which depend only on \((M, g, k)\) such that the following holds. For every \(\Tilde{\delta} > 0\), there exists a
large constant \(r_0 > 1\) such that every area-constrained critical sphere \(\Sigma \subset M \setminus K\) of the Hawking functional with
\[
|\Sigma| > 4 \pi r_0^2, \quad \quad \Tilde{\delta} r(\Sigma) < R(\Sigma), \quad \Tilde{\delta} R(\Sigma) < r(\Sigma)  \quad \text{and}\quad \int_{\Sigma} |\mathring{B}|^2 \, d\mu < \epsilon_0 
\]
 where $R(\Sigma):=\sup_{x \in \Sigma } |x|$ and $r(\Sigma)$ is the area radius, belongs to the foliation from Theorem \ref{exisfoli}.}

We can also relax our assumptions on $\mathrm{Sc}$ and $k$, obtaining a family of surfaces that is not necessarily a foliation, i.e., the surfaces may intersect with each other. 

 \noindent\textbf{Theorem \ref{family}.} \emph{Let $(M,g,k)$ be $ \mathcal{C}^4$-asymptotic to Schwarzschild with mass $m>0$. There exist constants $ \Tilde{\eta}_i>0$, i=1,2 such that if for $\eta_i< \Tilde{\eta}_i $ it holds 
     \begin{equation}
         \mathrm{Sc} \geq -\mathcal{O} (\eta_1 |x|^{-4}).
     \end{equation}
  Then there exists a compact set \(K \subset M\), a number \(\lambda_0 > 0\), and a family
\(\{\Sigma(\lambda) : \lambda \in (0, \lambda_0)\}\) of on-center stable area-constrained critical
spheres \(\Sigma(\lambda)\) of the Hawking functional,  satisfying (\ref{eulag}) with parameter \(\lambda\) such that as $\lambda \to 0$, $|\Sigma(\lambda)|\to \infty$ and   $ R(\lambda) \to \infty$, where $R(\lambda):=\sup_{x \in \Sigma(\lambda) } |x|$.} 

\emph{If additionally it holds,
\begin{equation}
     x^i \partial_i (|x|^2 \mathrm{Sc}) \leq \mathcal{O} (\eta_1 |x|^{-2}),
\end{equation}
\begin{equation}
 k(\cdot, \rho) = \mathcal{O} ( \eta_2| x|^{-2}), \quad \pi (\rho, \rho) = \mathcal{O} (\eta_2  | x|^{-2})\quad \text{and} \quad (\nabla_{\rho} \pi) (\rho, \rho)= \mathcal{O} (\eta_2| x|^{-3}),
\end{equation}
where $\pi = k - \tr k\, g$ and $\rho= \frac{x^i}{|x|}\partial_i$. Then the family of surfaces is unique in the sense of Theorem \ref{uniqueness}.}

In Theorems \ref{largesphere}, \ref{positivity} we prove that the  Hawking energy is nonnegative and satisfies the large–sphere limit ($ \mathcal{E}(\Sigma_r)\to m $ as $r \to \infty$) for both the foliation of Theorem~\ref{exisfoli} and the on–center family of Theorem~\ref{family}.

 In a similar way to \cite{willcen} for Willmore foliations, we also study the center of the foliations constructed above. Because the foliation is composed of on-center spheres, any natural geometric center lies in the compact interior region \(K\) from Definition \ref{asymptoticsc}, which is not visible through the asymptotic coordinate chart. For this reason, we introduce instead the \emph{coordinate center} of the foliation, defined at infinity via the ambient chart.

A closed, oriented \(2\)-surface \(\Sigma \hookrightarrow \mathbb{R}^3\) has a Euclidean coordinate center \(\vec{c}(\Sigma)\) defined by
\begin{equation}
    \vec{c}(\Sigma) := \frac{1}{|\Sigma|_\delta} \int_{\Sigma} \vec{x} \, d\mu_\delta, \label{eq:euclidean_center}
\end{equation}
where \(d\mu_\delta\) and \(|\Sigma|_\delta\) are the Euclidean area element and area, respectively. Given an asymptotically flat coordinate chart \(\vec{x}: M \setminus B \to \mathbb{R}^3 \setminus B_R(0)\), this definition extends to closed, oriented \(2\)-surfaces \(\Sigma \subset M \setminus B\) by pushing \(\Sigma\) forward to \(\mathbb{R}^3\) and setting \(\vec{c}(\Sigma):=\vec{c}(\vec{x}(\Sigma))\). We call \(\vec{c}(\Sigma)\) the Euclidean center of \(\Sigma\) with respect to \(\vec{x}\).
\begin{definition}[\textbf{Coordinate center of a foliation}]
    Let \((M, g, k)\) be an asymptotically flat initial data set, and fix an asymptotically flat chart \(\vec{x}: M \setminus B \to \mathbb{R}^3 \setminus B_R(0)\). Let \(\{\Sigma_\sigma\}_{\sigma>\sigma_0}\) be a foliation of the end \(M \setminus B\) by closed, oriented surfaces with area radius \(r(\Sigma_\sigma):=\sqrt{|\Sigma_\sigma|/4\pi} \to \infty\) as \(\sigma \to \infty\). Denote by \(\vec{c}(\Sigma_\sigma)\) the Euclidean center of \(\Sigma_\sigma\) with respect to \(\vec{x}\). Then the coordinate center \(\vec{C}=(C^1,C^2,C^3)\) of the foliation (with respect to \(\vec{x}\)) is
    \[
        \vec{C} := \lim_{\sigma\to\infty} \vec{c}(\Sigma_\sigma),
    \]
    provided the limit exists. Otherwise, we say the coordinate center of the foliation diverges (with respect to \(\vec{x}\)).
\end{definition}

\noindent\textbf{Theorem \ref{centerfolia}.} \emph{Let $(M,g,k)$ be $ \mathcal{C}^4$-asymptotic to Schwarzschild with mass $m>0$, satisfying the conditions of the foliation  Theorem \ref{exisfoli}.  Furthermore, assume that 
   \begin{equation*}
   \mathrm{Sc}^{\text{odd}} =\mathcal{O} ( |x|^{-5}), \quad  |k^{\text{even}}| + |x||(\nabla k)^{\text{odd}}| = \mathcal{O} (|x|^{-3})
\end{equation*}
and 
\begin{equation*}
    x^i \partial_i (|x|^2 \mathrm{Sc}) \leq \mathcal{O}( |x|^{-3}),
\end{equation*}
Then there exists an on-center foliation by Hawking surfaces, and its center is given by 
\begin{equation*}\begin{split}
    C_{f}=&  \lim_{r \to \infty}\frac{1}{128 \pi} r^3 \int_{S_r(r \xi(r))} \left( \mathrm{Sc} +r \Big( P( \nabla_\nu \tr k - \nabla_\nu k(\nu,\nu )) - 2 \diver_\Sigma (P k(\cdot, \nu))  +\frac{1}{2}H P^2 \Big)\right) \nu \, d\mu_{\delta}\\
    &+ C_H
    \end{split}
\end{equation*}
provided the limit converges, where $C_H$ is the Hamiltonian center of mass, and the spheres $S_r(r \xi(r))$ are introduced in the proof of the previous foliation construction theorem.} 

Moreover, in Corollaries \ref{corocenter1} and \ref{corocenter2} we identify special cases where the foliation’s center converges more directly, and can be computed without reference to the intermediate spheres $S_r(r \xi(r))$ used in the construction. In particular, we find that the center of the  foliation is more sensitive to the asymptotic distribution of the scalar curvature \(\mathrm{Sc}\) and of the second fundamental form \(k\) than the STCMC center of Cederbaum–Sakovich, and it need not coincide with the latter in general.

In Section \ref{rigifoligene} we study the nonnegativity and monotonicity of the Hawking energy along the foliation and prove the following  result.

\noindent\textbf{Theorem \ref{monotinic0}.} \emph{Let $(M,g,k)$ be an initial data set  satisfying the dominant energy condition and let \( \Sigma \) be a Hawking surface with positive mean curvature satisfying  $\int_\Sigma f \, d\mu \leq 0$ for
    \begin{equation*}
        f:= \left( \frac{P}{H}\right)^2|k|^2+ \frac{1}{2 }(\tr k)^2    - \frac{3}{4} P^2- \frac{P}{H}( \nabla_\nu \tr k - \nabla_\nu k(\nu,\nu )) -  \frac{1}{2} |k|^2  -\frac{1}{2} |\mathring{B}|^2 -|J|.
    \end{equation*}
     Then if \( F : \Sigma \times [0, \varepsilon) \rightarrow M \) is a variation with initial velocity \( \frac{\partial F}{\partial s} \Big|_{s=0} = \alpha \nu \) and $\int_{\Sigma} \alpha H \, d\mu \geq 0,$ then 
\[
\frac{d}{ds} \mathcal{E}(F(\Sigma, s)) \geq 0.
\]}

In the harmonic and York asymptotic regimes, we verify that the assumptions on $k$ from our existence/monotonicity theory hold; in particular, the integrability condition $\int_\Sigma f\,d\mu \le 0$ is satisfied along the foliation. Consequently, in Theorems~\ref{monocosho}–\ref{monoyork} we show that the Hawking energy is nondecreasing along the foliation by Hawking surfaces for any manifold with these asymptotics.

Finally,  we obtain a rigidity result for   foliations by Hawking surfaces.

\noindent\textbf{Theorem \ref{rigigeneralfoli}.} \emph{Let $(M,g,k)$ be an asymptotically flat  initial data set satisfying the dominant energy condition. If the initial data set possesses an on-center foliation by Hawking surfaces   and one  surface $\Sigma$ of the foliation satisfies:}

    \begin{enumerate}[label=(\roman*)]
    
    \item \emph{The Hawking energy of the surface is zero.}
    
    \item \emph{Outside of $\Sigma$  $(M,g)$ has nonnegative scalar curvature.} 

    \item \emph{There exists a constant $0\leq\beta <\frac{1}{2}$ such that $\int_\Sigma f_\beta -\lambda \, d\mu \leq 0$  for 
 \begin{equation*}
       f_\beta:= \left( \frac{P}{H}\right)^2|k|^2+ \frac{1}{2 }(\tr k)^2   - \frac{3}{4} P^2- \frac{P}{H}( \nabla_\nu \tr k - \nabla_\nu k(\nu,\nu ))  -  \beta( |k|^2 + |\mathring{B}|^2 +2|J|).
    \end{equation*} }
\end{enumerate}    
\emph{Then $(M,g,k)$ is isometric to a spacelike hypersurface in  Minkowski spacetime.}

Note that here we only assume asymptotic flatness, a weaker hypothesis than asymptotic Schwarzschild, see Remark \ref{remarkrigidi}. The hypothesis of this Theorem hold for harmonic and York asymptotics, see  Corollary \ref{rigiharmoyor}.

In conclusion, our analysis shows that, under the dominant energy condition, the Hawking energy on Hawking surfaces has the expected quasi-local features in a broad class of initial data sets. Monotonicity, however,  currently holds only for foliations under an added integrability condition satisfied, for instance, in harmonic and York asymptotics. Moreover, as noted in \cite{rigidiaz}, the rigidity results may rely on unnecessarily strong hypotheses, inducing an “excess positivity”. Relaxing these constraints could yield a sharper characterization.

\vspace{0.305 cm}
Some of the material in Sections 2–4 first appeared in the author’s Ph.D. thesis \cite{PenuelaThesis}.
\vspace{0.4 cm}

 \section{Foliation construction and large-sphere limit }\label{large foliations}

\subsection{Foliation construction}

We  assume that our manifold is asymptotically 
Schwarzschild, and by scaling, we  assume $m=2$. Additionally, we  assume during this section that
\begin{equation}
    x^i \partial_i (|x|^2 \mathrm{Sc} ) \leq \mathcal{O}(\eta_1 |x|^{-2}), \quad  \mathrm{Sc}(x)-\mathrm{Sc}(-x)= \mathcal{O}(\eta_2 |x|^{-4})
\end{equation}
for some small constant $ \eta_1$, $\eta_2$ to be fixed later. Also assume that  
\begin{equation}\label{assumpi}
k_{ij}(x) = \mathcal{O}(\eta_3 |x|^{-2}) , \quad (\nabla k)_{ij}(x)= \mathcal{O}(\eta_3 |x|^{-3}) 
\end{equation}
for some small constants $ \eta_3$ to be fixed later. Note that  $x^i \partial_i (|x|^2 \mathrm{Sc} ) \leq \mathcal{O}(\eta_1 |x|^{-2})$ is a condition on the radial derivative of $|x|^2 \mathrm{Sc}$ and is equivalent to $\rho (|x|^2 \mathrm{Sc} ) \leq \mathcal{O}(\eta_1 |x|^{-3})$, where $\rho=\frac{x^i}{|x|}\partial_i $ is the radial vector in Euclidean space.

We rewrite the Euler–Lagrange equation \eqref{eulag} as 
\begin{equation}\label{shorteug}
    H\lambda+ W_1 + W_2=0.
\end{equation}
where
\begin{equation}\label{explaineulag}
   W_1 := \Delta^\Sigma H + H|\mathring{B}|^2 + H\,\mathrm{Ric}(\nu,\nu),
\quad
W_2 := P(\nabla_\nu \tr k - \nabla_\nu k(\nu,\nu))
      - 2\,\operatorname{div}_\Sigma\!\big(P\,k(\cdot,\nu)\big)
      + \tfrac12 H P^2, 
\end{equation}
and $W_1$ is the (standard) Willmore operator.

For a normal variation $ \frac{d F}{d t} = \alpha \nu $ one has the first-variation identity
\begin{equation}\label{firstvary}
\begin{split}
    \frac{d }{d t}\left( \int_\Sigma P^2 \,d\mu \right)&= \int_\Sigma 2 P \alpha ( \nabla_{\nu} \tr k - \nabla_{\nu} k(\nu,\nu )) +4 P    k(\nabla^\Sigma \alpha, \nu )
   +\alpha H P^2  d\mu\\
   &= \int_\Sigma   2 W_2 \alpha  d\mu
   \end{split}
\end{equation}
where we have used that $ \frac{d }{d t} P= \alpha ( \nabla_{\nu} \tr k - \nabla_{\nu} k(\nu,\nu )) +2     k(\nabla^\Sigma \alpha, \nu ) $. If we now take a second normal variation  $ \frac{d F}{d s} = \Tilde{\alpha} \nu$ and
vary the right hand side of (\ref{firstvary}), we obtain, after grouping some terms, the mixed second variation
\begin{equation}\label{secondvary}
    \begin{split}
   \frac{d }{d s}\frac{d }{d t}\left( \int_\Sigma P^2 \,d\mu \right)=\int_\Sigma &P^2 \alpha L_J \Tilde{\alpha} + 2  \left[ 2     k(\nabla^\Sigma \alpha,  \nu )- \alpha \nabla_\nu\pi(\nu,\nu) \right] \left[ 2    k(\nabla^\Sigma \Tilde{\alpha}, \nu ) -\Tilde{\alpha} \nabla_\nu\pi(\nu,\nu)\right] \\
&+ 2 P \alpha \left[   \nabla_{\nabla^\Sigma \Tilde{\alpha}}\pi(\nu,\nu) - \Tilde{\alpha} (\nabla_\nu\nabla_\nu\pi)(\nu,\nu)  +2 (\nabla_{\nu} k)(\nabla^\Sigma \Tilde{\alpha},\nu ) \right] \\    
   & +\Tilde{\alpha} H P \left[  4     k(\nabla^\Sigma \alpha, \nu )- 2  \alpha \nabla_\nu\pi(\nu,\nu)
   +\alpha H P  \right] + 4 P \Tilde{\alpha} \nabla_{\nu  }k(\nabla^\Sigma \alpha, \nu)  \\
   &+ 2 P \frac{\partial \alpha }{\partial s}  ( \frac{P}{2} H- \nabla_\nu\pi(\nu,\nu)) +4 Pk (\frac{\partial}{\partial s} \nabla^\Sigma \alpha, \nu) -4 P k(\nabla^\Sigma \alpha, \nabla^\Sigma \Tilde{\alpha} )\, d\mu\\
    \end{split}
\end{equation}
where $L_{J} \Tilde{\alpha}:= \frac{d }{d s}H=  - \Delta^\Sigma \Tilde{\alpha} - \left(|B|^2 + \mathrm{Ric} (\nu, \nu )\right)  \Tilde{\alpha} $ is the Jacobi operator, and  we set $\nabla\pi(\nu,\nu)=(\nabla k)(\nu,\nu ) -\nabla \tr k $ and $\nabla_\nu\nabla_\nu\pi(\nu,\nu)=(\nabla_\nu\nabla_\nu k)(\nu,\nu ) -\nabla_\nu\nabla_\nu \tr k $.
We denote this variation by 
\begin{equation}
   \frac{d }{d s}\frac{d }{d t}\left( \int_\Sigma P^2 \,d\mu \right):= \int_\Sigma 2 \mathcal{D}W_2(\alpha, \Tilde{\alpha} ) \,d\mu, 
\end{equation}
where we denote $\mathcal{D}W_2(\alpha, \Tilde{\alpha} ) :=  \frac{d }{d s} (W_2 \alpha)+ \alpha \Tilde{\alpha}H^2P^2  $.

We follow the notation of \cite{eichko}. Given $\xi \in \mathbb{R}^3$ and $r>0$ we denote by $S_{\xi, r}$ the sphere  
$$S_{\xi, r}= S_r (r \xi)= \{ 
x \in \mathbb{R}^3 : |x-r \xi|= r   \}$$
Given $u \in \mathcal{C}^4(S_{\xi, r}) $, we define the map
$$\Phi^u_{\xi, r}:S_{\xi, r} \longrightarrow \mathbb{R}^3 : x\mapsto  \Phi^u_{\xi, r}(x) =x +u(x)(r^{-1} x - \xi), $$

where  $r^{-1} x - \xi$ is the Euclidean unit normal to $ S_{\xi, r}$, and we denote by $\Sigma_{\xi, r }(u)= \Phi^u_{\xi, r}(S_{\xi, r} )$ the Euclidean graph of $u$ over $S_{\xi, r}$.

We want to  generalize to the initial data set setting  the  foliation constructed by Eichmair and Koerber in the following theorem.
\begin{theorem}[{\cite[Theorem 5]{eichko}}]    
     Let $(M, g)$ be $\mathcal{C}^4$-asymptotic to Schwarzschild with mass $m > 0$ and suppose that the scalar curvature $\mathrm{Sc}$ satisfies 
\[
x^i \partial_i\left(|x|^2 \mathrm{Sc}\right) \leq o(|x|^{-2})\quad  \text{and } \quad \mathrm{Sc}(x) - \mathrm{Sc}(-x) = o(|x|^{-4}).
\]
There exists a compact set $K \subset M$, a number $\lambda_0 > 0$, and stable, on-center, area-constrained critical Willmore spheres  $\Sigma(\lambda)$, $\lambda \in (0, \lambda_0)$,  with parameter $\lambda$ such that $M \setminus K$ is foliated by the family $\{\Sigma(\lambda) : \lambda \in (0, \lambda_0)\}$. 
 \end{theorem}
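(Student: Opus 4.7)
The plan is to carry out a Lyapunov--Schmidt reduction adapted to the Willmore Euler--Lagrange equation, parameterizing candidate leaves as Euclidean graphs over approximately round coordinate spheres and then splitting the equation into a nondegenerate part (solved by the implicit function theorem) and a finite-dimensional bifurcation part that selects the correct center. For each $r\gg 1$ and each $\xi\in\mathbb{R}^3$ with $|\xi|$ small, I would use the map $\Phi^u_{\xi,r}$ already introduced to view graphs $\Sigma_{\xi,r}(u)$ as perturbations of $S_{\xi,r}$, with $u$ in a weighted H\"older space modelled on $\mathcal{C}^{4,\alpha}(\mathbb{S}^2)$, and couple the Lagrange parameter to the area radius by the natural Schwarzschild scaling $\lambda=\lambda(r)\sim r^{-2}$. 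The Willmore equation $\lambda H+W_1=0$ then becomes a nonlinear operator equation $F(r,\xi,u)=0$ on $\mathbb{S}^2$ after pullback by $\Phi^u_{\xi,r}$.

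Next, I would linearize $F$ at $u=0$ in exact Schwarzschild. The linearization $L_r$ is a fourth-order elliptic operator whose principal part on a coordinate sphere of radius $r$ behaves, after rescaling, like $\Delta_{\mathbb{S}^2}(\Delta_{\mathbb{S}^2}+2)$, whose $L^2$-kernel is spanned by the three first spherical harmonics $x^i/|x|$. These correspond to infinitesimal translations of the center $\xi$. Setting $K:=\operatorname{span}\{x^1/|x|,x^2/|x|,x^3/|x|\}$, one checks that $L_r|_{K^\perp}$ has a uniformly bounded inverse in the chosen weighted norms. Decomposing
\begin{equation*}
\Pi_{K^\perp}F(r,\xi,u)=0\qquad\text{and}\qquad \Pi_K F(r,\xi,u)=0,
\end{equation*}
I would solve the first (auxiliary) equation for $u=u(r,\xi)\in K^\perp$ by the implicit function theorem, with quantitative decay $\|u\|_{\mathcal{C}^{4,\alpha}}=\mathcal{O}(r^{-1})$ and smooth dependence on $\xi$; the estimate uses the invertibility of $L_r|_{K^\perp}$ together with standard control on the nonlinear remainder coming from $\Phi^u_{\xi,r}$.

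The heart of the proof is the three-dimensional bifurcation equation $\Pi_K F(r,\xi,u(r,\xi))=0$. A direct expansion shows that, up to an appropriate power of $r$, it takes the schematic form
\begin{equation*}
c\int_{S_{\xi,r}}\mathrm{Sc}\,\frac{x-r\xi}{r}\,d\mu_{\delta}+E(r,\xi)=0,
\end{equation*}
for a nonzero universal constant $c$ and an error $E(r,\xi)$ controlled by $u(r,\xi)$ and its derivatives. The symmetry hypothesis $\mathrm{Sc}(x)-\mathrm{Sc}(-x)=o(|x|^{-4})$ kills the would-be leading contribution of the odd part of $\mathrm{Sc}$, while the one-sided radial-derivative bound $x^i\partial_i(|x|^2\mathrm{Sc})\le o(|x|^{-2})$ supplies the Lipschitz estimate in $\xi$ needed to run a contraction-mapping argument on a small ball in $\mathbb{R}^3$. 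This produces a unique $\xi=\xi(r)\to 0$ for each large $r$, and one then sets $\Sigma(\lambda):=\Sigma_{\xi(r),r}(u(r,\xi(r)))$ with $\lambda=\lambda(r)$.

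To conclude, I would verify the foliation property by computing the lapse $\langle\partial_r\Phi,\nu\rangle$ from the Schwarzschild background and checking positivity for $r$ large, using that $\xi'(r)$ contributes only lower-order terms; and stability by noting that at a critical point the second variation reduces to $L_r$ restricted to $K^\perp$, which has positive spectrum by the construction. The main obstacle throughout is the bifurcation analysis: because the reduced system is weakly coercive, its solvability is governed by delicate cancellations between the asymptotics of $\mathrm{Sc}$ and the geometry of the model spheres, so extracting the correct leading order of $\Pi_K F$ and matching it to exactly the stated hypotheses on $\mathrm{Sc}$ is the crucial step. It is also the step that, in the Hawking generalization to be pursued next, forces the additional smallness assumptions on $k$ and $\nabla k$ and later dictates the form of the foliation's coordinate center.
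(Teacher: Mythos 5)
The decisive step in your plan—the finite-dimensional bifurcation equation—does not work as described, and this is exactly where your route diverges from what the hypotheses can support. You propose to write the projected equation schematically as $c\int_{S_{\xi,r}}\mathrm{Sc}\,\nu\,d\mu_\delta+E(r,\xi)=0$ and solve it by contraction, claiming that the bound $x^i\partial_i(|x|^2\mathrm{Sc})\le o(|x|^{-2})$ supplies the Lipschitz estimate in $\xi$. It cannot: this hypothesis is a \emph{one-sided} inequality, so it only yields sign/lower-bound information on radial derivatives of the scalar-curvature contribution (e.g.\ $\xi^i\partial_i G_{r,2}\ge -o(1)$ and $\bar D^2 G_{r,2}\ge -(o(1)+c|\xi|)\,\mathrm{Id}$), never a two-sided Lipschitz or $C^1$ bound in $\xi$. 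Moreover, for a $\mathcal{C}^4$-asymptotically Schwarzschild end one only has $\mathrm{Sc}=\mathcal{O}(|x|^{-4})$, so the term $r^2\int_{S_{\xi,r}}\mathrm{Sc}\,\nu\,d\mu_\delta$ is of size $\mathcal{O}(1)$, not perturbatively small; the asymptotic evenness kills it only at $\xi=0$, not on off-center spheres. Hence there is no small parameter against which to contract, and no Lipschitz control with which to contract. The argument that actually closes this step (in Eichmair--Koerber, and in the paper's $k\neq 0$ extension) is variational: one shows $\Sigma_{\xi,r}$ solves the equation exactly iff $\xi$ is a critical point of the reduced function $G_r(\xi)=\lambda^2\bigl(\int_{\Sigma_{\xi,r}}H^2\,d\mu-16\pi+64\pi\lambda^{-1}\bigr)$, expands $G_r=G_1+G_{r,2}+\mathcal{O}(r^{-1})$, and uses strict convexity of $G_1$ near the origin, the one-sided hypothesis to make $G_r$ increasing in radial directions away from the origin, and the blow-up of $G_1$ as $|\xi|\to1$ to produce a unique interior minimizer $\xi(r)$. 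This variational structure is also what later gives the quantitative control $\bar D^2G_r\ge\tau\,\mathrm{Id}$ used to estimate $\xi'(r)$ and verify both the foliation property and uniqueness; your contraction scheme provides no substitute for it.

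Two further points in your setup need repair. First, the kernel of the model operator $\Delta_{\mathbb{S}^2}(\Delta_{\mathbb{S}^2}+2)$ contains the constants $\Lambda_0$ as well as the first harmonics $\Lambda_1$; the constant direction is handled in the reduction by carrying the area constraint $|\Sigma_{\xi,r}(u)|=4\pi r^2$ and the Lagrange multiplier $\lambda$ as additional unknowns (this is precisely the structure of the map $T(u,\lambda,g,k)$ in the paper's lemma). Prescribing $\lambda=\lambda(r)$ in advance, as you do, leaves a nearly degenerate constant mode, so the claimed uniformly bounded inverse on the complement of the span of the first harmonics alone is not available. Second, the scaling you assign is off: for area-constrained Willmore spheres in an asymptotically Schwarzschild end the multiplier satisfies $\lambda\sim 2m\,r^{-3}$ (cf.\ the expansion $\lambda=4r^{-3}+\mathcal{O}(r^{-4})$ after normalizing $m=2$), not $\lambda\sim r^{-2}$.
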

Our foliation construction follows the strategy of Eichmair–Koerber \cite{eichko}, in particular their use of a Lyapunov–Schmidt reduction. It differs from the Lyapunov–Schmidt reduction of the local foliations in \cite{diaz2023local} in that we project the equation only once, directly onto the complement of the kernel, rather than using an iterative projection scheme. Below, we give a brief outline of the construction in \cite{eichko}.

$ i)$ \textbf{Perturbation of spheres.} 
    Begin with the large coordinate spheres
    $S_r(r\xi)$, $ |\xi|<1$. An application of the implicit function theorem produces a nearby surface \(\Sigma_{r,\xi}\) of fixed area \(4\pi r^2\) which satisfies the area‐constrained Willmore  equation up to the span of the first spherical harmonics.

$ii)$ \textbf{Reduction to a finite‐dimensional problem.} 
    It is shown that \(\Sigma_{r,\xi}\) genuinely solves the constrained Willmore equation exactly if and only if \(\xi\) is a critical point of the  function
    \[
      G_r(\xi)
      = \lambda^2\Bigl(\!\int_{\Sigma_{r,\xi}}H^2\,d\mu - 16\pi + 64\pi\lambda^{-1}\Bigr).
    \]

$iii)$ \textbf{Convexity and existence of a minimizer.} 
    A detailed expansion of \(G_r\) in the asymptotic regime reveals it to be strictly convex for large \(r\), with a unique minimizer \(\xi(r)\) close to the origin.  The surfaces \(\Sigma_{r,\xi(r)}\) then assemble into a global foliation by Willmore spheres of the end.

$iv)$ \textbf{Uniqueness of the foliation.} 
    Finally, uniqueness follows by combining the uniqueness clause in the Implicit Function Theorem with the fact that for each \(r\) the only critical point of \(G_r\) is \(\xi(r)\).

In our setting we follow the same overall strategy, but now keep track of the extra terms involving the second fundamental form $k$.  A careful check of each step in \cite{eichko} shows that their foliation continues to exist   if one relaxed  the conditions on the scalar curvature by changing the little $o$ bounds by a big $\mathcal{O}$ bound with a small constant $\eta_i$.  Therefore, in our results  we consider such  $\mathcal{O}$-type decays.

Let $\alpha \in (0, 1)$ and $\gamma > 1$. We denote by $\mathcal{G}_1$ the space of $\mathcal{C}^{3,\alpha}$-Riemannian metrics and by $\mathcal{G}_2$ the space of $\mathcal{C}^{3,\alpha}$ symmetric 2-tensors on
$$\{y \in \mathbb{R}^3: \gamma^{-1} \leq |y| \leq \gamma\}$$

with the $\mathcal{C}^{3,\alpha}$-topology. Let $\Lambda_0(S_1(0))$ and $\Lambda_1(S_1(0))$ be the constants and first spherical harmonics
viewed as subspaces of $\mathcal{C}^{4,\alpha}(S_1(0))$, respectively. We use the symbol $\bot$ to denote the $L^2(S_1(0))$-orthogonal complements of these spaces. Now we derive the $k\neq 0$ generalization of the result \cite[Lemma 16]{eichko}.
\begin{lemma}
There exist open neighborhoods $\mathcal{U}_1$ of $\delta \in  \mathcal{G}_1$, $\mathcal{U}_2$ of $0 \in  \mathcal{G}_2$,  $\mathcal{V}$ of $0 \in \Lambda_1(S_1(0))^\bot$, and $I$ of $0 \subset \mathbb{R}$ as
well as smooth maps $u : \mathcal{U}_1 \times \mathcal{U}_2 \longrightarrow \mathcal{V}$ and $\lambda : \mathcal{U}_1 \times \mathcal{U}_2 \longrightarrow I$ such that for every $(g,k)\in\mathcal U_1\times\mathcal U_2$ the surface $\Sigma_{0,1}(u(g,k))$ has area equal to $4 \pi$ and satisfies
\begin{equation}\label{eulag1}
 \lambda H  +\Delta^\Sigma H + H|\mathring{B}|^2+ H \mathrm{Ric}(\nu, \nu)+P( \nabla_\nu \tr k - \nabla_\nu k(\nu,\nu )) - 2\diver_\Sigma (Pk(\cdot, \nu)) +\frac{1}{2}H P^2 \in \Lambda_1(S_1(0)).
\end{equation}  
Here, all geometric quantities are computed with respect to the surface $\Sigma_{0,1}(u(g))$, the metric $g$, and the tensor $k$. Moreover, if $(g_0,k_0) \in \mathcal{U}_1 \times \mathcal{U}_2 $,  $u_0 \in \mathcal{V} $ and $\lambda_0 \in I$ are such that $\Sigma_{0,1}(u_0)$ satisfies (\ref{eulag1}) with respect to $g_0$ and $k_0$, and has area equal to $4 \pi$, then $u_0 = u(g_0, k_0)$ and $\lambda_0 = \lambda (g_0, k_0)$.
\end{lemma}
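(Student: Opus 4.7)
My plan is to set this up as a direct application of the Implicit Function Theorem, following the strategy of \cite[Lemma 16]{eichko} but carrying the extra $k$-dependent terms along as lower-order perturbations. Concretely, I would define a map
\[
F:\mathcal U_1\times\mathcal U_2\times\mathcal V\times I\longrightarrow \Lambda_1(S_1(0))^\bot\times\mathbb R,
\]
where $F(g,k,u,\lambda)$ has first component equal to the $L^2(S_1(0))$-orthogonal projection onto $\Lambda_1(S_1(0))^\bot$ of the Euler--Lagrange expression
\[
\lambda H+\Delta^\Sigma H+H|\mathring B|^2+H\,\mathrm{Ric}(\nu,\nu)+P(\nabla_\nu\tr k-\nabla_\nu k(\nu,\nu))-2\diver_\Sigma(Pk(\cdot,\nu))+\tfrac12 HP^2
\]
computed on $\Sigma_{0,1}(u)$ with respect to $(g,k)$, and second component equal to the area deficit $|\Sigma_{0,1}(u)|_g-4\pi$. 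Smoothness of $F$ in the $\mathcal C^{3,\alpha}$-topologies is standard, since all ingredients are local polynomial expressions in $u$ and its derivatives with coefficients smoothly depending on $g,k$. At the base point we have $F(\delta,0,0,0)=0$: on $S_1(0)\subset\mathbb R^3$ with Euclidean metric, $H=2$, $\mathring B=0$, $\mathrm{Ric}=0$, and every summand of $W_2$ vanishes because $k=0$, while the area equals $4\pi$.

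The heart of the proof is to verify that the partial derivative $D_{(u,\lambda)}F$ at $(\delta,0,0,0)$ is an isomorphism $\Lambda_1(S_1(0))^\bot\times\mathbb R\to\Lambda_1(S_1(0))^\bot\times\mathbb R$. Two observations trivialize the contribution of $W_2$: every term of $W_2$ is at least linear in $k$ with coefficients that are themselves at least linear in $k$ (since $P=\tr_\Sigma k$), so $W_2$ is quadratic in $k$ as a polynomial in the tensor; its $u$-derivative therefore vanishes identically when $k=0$. Since also $\lambda=0$ at the base point, the linearization reduces to that of the pure Willmore operator, which on the round unit sphere is the classical calculation
\[
\delta_u W_1 = \Delta^{S_1}(\delta_u H)=-\Delta^{S_1}(\Delta^{S_1}+2)u,
\]
using $\delta_u H=-\Delta^{S_1}u-2u$ and the quadratic vanishing of $|\mathring B|^2$ on the round sphere. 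The $\lambda$-derivative is $H=2$, and the $u$-derivative of area is $\int_{S_1(0)} u\,d\mu_{S_1}$, up to a constant.

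To see the isomorphism, decompose $v\in\Lambda_1(S_1(0))^\bot$ uniquely as $v=v_0+v'$ with $v_0\in\Lambda_0$ constant and $v'\in\bigoplus_{\ell\ge2}\Lambda_\ell$. Using that the spherical-harmonic eigenvalues of $-\Delta^{S_1}(\Delta^{S_1}+2)$ are $\ell(\ell+1)(\ell(\ell+1)-2)$, the operator kills $\Lambda_0\oplus\Lambda_1$ and is a topological isomorphism on the $\mathcal C^{3,\alpha}$-closure of $\bigoplus_{\ell\ge 2}\Lambda_\ell$. Hence $D_{(u,\lambda)}F\cdot(v,\mu)=\bigl(-\Delta^{S_1}(\Delta^{S_1}+2)v'+2\mu,\;c\,v_0\bigr)$ (for an explicit nonzero constant $c$), which is triangular with respect to the decomposition $\Lambda_0\oplus\Lambda_{\ge 2}$ and invertible: the second equation solves for $v_0$, the constant mode of the first equation solves for $\mu$, and the higher-mode part is inverted by $(-\Delta^{S_1}(\Delta^{S_1}+2))^{-1}$ on $\Lambda_{\ge 2}$.

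Having verified the hypotheses, the Implicit Function Theorem in Banach spaces produces open neighborhoods $\mathcal U_1,\mathcal U_2,\mathcal V,I$ and smooth maps $(g,k)\mapsto u(g,k)\in\mathcal V$, $(g,k)\mapsto\lambda(g,k)\in I$ such that $F(g,k,u(g,k),\lambda(g,k))=0$, which is exactly the statement that $\Sigma_{0,1}(u(g,k))$ has area $4\pi$ and satisfies the Euler--Lagrange expression modulo $\Lambda_1(S_1(0))$. The uniqueness clause follows immediately from the local uniqueness in the IFT. The only genuinely delicate part is the bookkeeping in Step~3; once one notices that $W_2$ is quadratic in the $k$-tensor and the $\lambda H$ term is quadratic in $(u,\lambda)$ at the base point, the linearization collapses to the Willmore case treated in \cite{eichko} and the isomorphism property is automatic.
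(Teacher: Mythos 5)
Your proposal is correct and follows essentially the same route as the paper: an Implicit Function Theorem argument for the map whose components are the $\Lambda_1(S_1(0))^\bot$-projection of the Euler--Lagrange expression and the area, with the key observations that $W_2$ contributes nothing to the $(u,\lambda)$-linearization at $(g,k)=(\delta,0)$ and that the resulting operator is triangular with blocks $\pm\Delta^{\mathbb{S}^2}(\Delta^{\mathbb{S}^2}+2)$ (kernel $\Lambda_0\oplus\Lambda_1$, isomorphism on $\Lambda_{\ell\ge2}$), the constant $H=2$ in the $\lambda$-slot, and $8\pi P_{\Lambda_0}$ from the first variation of area. The only deviation is an overall sign on the fourth-order operator compared with the paper's displayed $-\Delta^{\mathbb{S}^2}(-\Delta^{\mathbb{S}^2}-2)$ (your sign actually matches the expansion used later in Lemma \ref{expansquafoli}), and this is immaterial since neither the kernel nor the invertibility argument is affected.
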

\begin{proof}
The argument follows the same general scheme as in the time-symmetric case (\(k=0\)),
but since it constitutes a central step in the analysis we present a complete version here for clarity.

   Let $\Tilde{\mathcal{V}} $ be a neighborhood of $0 \in \Lambda_1(S_1(0))^\bot $,  $\Tilde{\mathcal{U}}_1 $ a neighbourhood of the Euclidean metric $\delta \in  \mathcal{G}_1$, and $\Tilde{\mathcal{U}}_2 $ a neighborhood of $0 \in  \mathcal{G}_2$, such that the map 
   \begin{equation}
       T: \Tilde{\mathcal{V}} \times \mathbb{R} \times \Tilde{\mathcal{U}}_1 \times \Tilde{\mathcal{U}}_2   \longrightarrow \Lambda_1(S_1(0))^\bot \times  \mathbb{R}
   \end{equation}
   given by 
   \begin{equation}
       T(u,\lambda, g, k)= \left( P_{\Lambda_1(S_1(0))^\bot}(W_1(g)+H \lambda +W_2(g,k)), |\Sigma|   \right)
   \end{equation}
where $P_{\Lambda_1(S_1(0))^\bot}$ denotes the orthogonal projection to $ \Lambda_1(S_1(0))^\bot$, is well-defined and smooth.
   
The derivative in the direction of the first variable at point $(0,0,\delta, 0) $ is 
$$(dT)_{(0,0,\delta, 0)} ( (u,0, 0,0)) = (  -\Delta^{\mathbb{S}^2} (- \Delta^{\mathbb{S}^2} -2)u   ,\,  8 \pi P_{\Lambda_0(S_1(0))} u  ) $$
and the derivative in the direction of the second variable at point $(0,0,\delta, 0) $ is 
$$(dT)_{(0,0,\delta, 0)} ( (0,\lambda, 0,0)) = (2 \lambda, 0 ) $$
Then the differential $dT$ is 
\begin{equation}
dT(u,\lambda) = 
    \begin{pmatrix}
 -\Delta^{\mathbb{S}^2} (- \Delta^{\mathbb{S}^2} -2)  & 2  \\
 8 \pi P_{\Lambda_0(S_1(0))}  & 0 
\end{pmatrix}
\begin{pmatrix}
u \\
\lambda \\
\end{pmatrix}
\end{equation}
If we manage to see that $dT$ is an isomorphism, then by the implicit function theorem, we would have the result.  Now to see the $dT$ is injective, suppose that there are $u_1,u_2 \in  \Tilde{\mathcal{V}} \subset \Lambda_1(S_1(0))^\bot $ and $ \lambda_1, \lambda_2 \in \mathbb{R} $ such that 
\begin{equation}
    \begin{pmatrix}
 -\Delta^{\mathbb{S}^2} (- \Delta^{\mathbb{S}^2} -2)  & 2  \\
 8 \pi P_{\Lambda_0(S_1(0))}  & 0 
\end{pmatrix}
\begin{pmatrix}
u_1 \\
\lambda_1 \\
\end{pmatrix}
=
\begin{pmatrix}
 -\Delta^{\mathbb{S}^2} (- \Delta^{\mathbb{S}^2} -2)  & 2  \\
 8 \pi P_{\Lambda_0(S_1(0))}  & 0 
\end{pmatrix}
\begin{pmatrix}
u_2 \\
\lambda_2 \\
\end{pmatrix}
\end{equation}
this implies 
$$  8 \pi P_{\Lambda_0(S_1(0))}u_1=  8 \pi P_{\Lambda_0(S_1(0))}u_2, \quad    -\Delta^{\mathbb{S}^2} (- \Delta^{\mathbb{S}^2} -2) u_1 +2\lambda_1=  -\Delta^{\mathbb{S}^2} (- \Delta^{\mathbb{S}^2} -2) u_2 +2\lambda_2  $$

Combining the first equation with the second we obtain.
$$   -\Delta^{\mathbb{S}^2} (- \Delta^{\mathbb{S}^2} -2) (\Tilde{u}_1- \Tilde{u}_2) = 2(\lambda_2 -\lambda_1) $$
where we denote $\Tilde{u}:=P_{(\Lambda_0(S_1(0)) \times \Lambda_1(S_1(0)))^\bot}(u) $ the orthogonal projection of $u$ to $(\Lambda_0(S_1(0)) \times \Lambda_1(S_1(0)))^\bot$, we also denote $\Lambda_i(S_1(0))$  by $\Lambda_i$ .

 Since $-\Delta^{\mathbb{S}^2} (- \Delta^{\mathbb{S}^2} -2):(\Lambda_0 \times \Lambda_1)^\bot \to (\Lambda_0 \times \Lambda_1)^\bot $ is an isomorphism and $\Tilde{u}_1- \Tilde{u}_2 \in (\Lambda_0 \times \Lambda_1)^\bot $, then $2(\lambda_2 -\lambda_1) \in (\Lambda_0 \times \Lambda_1)^\bot$, and with this we have  that $\lambda_1= \lambda_2$ and $u_1 =u_2 $. 

 Now to show that  $dT$ is surjective  consider a given  $u_1 \in  \Tilde{\mathcal{V}} \subset \Lambda_1^\bot $ and $ \lambda_1 \in \mathbb{R} $.  By choosing  $u_2 \in  \Tilde{\mathcal{V}} \subset \Lambda_1^\bot $ and $ \lambda_2 \in \mathbb{R} $ such that  $\lambda_2 = \frac{1}{2} P_{\Lambda_0}(u_1) $,  $P_{(\Lambda_0 \times \Lambda_1)^\bot}(u_2)=(-\Delta^{\mathbb{S}^2} (- \Delta^{\mathbb{S}^2} -2))^{-1} P_{(\Lambda_0 \times \Lambda_1)^\bot}(u_1) $ and  $P_{\Lambda_0}(u_1)=P_{\Lambda_0}(u_1)$, it holds $dT(u_2,\lambda_2)= (u_1,\lambda_1)^T$. Then $dT$ is an isomorphism and by the implicit function theorem we have the result.
\end{proof}
We denote $ \Lambda_0(S_{\xi,r})$ 
and $ \Lambda_1(S_{\xi,r})$ to be the constants and first spherical harmonics viewed as subspaces of  $\mathcal{C}^{4,\alpha}(S_{\xi, r})$, respectively.

Consider the map $ \Theta_{\xi, r}(y)= r( \xi +y ) $,  like in \cite{diaz2023local} or \cite{main1local}, where local foliations in initial data sets were constructed,  we consider rescaled data    $g_{\xi, r} = r^{-2} \Theta^*_{\xi, r} g$ and $ k_{\xi, r} = r^{-1} \Theta^*_{\xi, r} k$ on $M$. Then on this rescaled manifold, we define the function
\begin{equation}\label{resc}
\begin{split}
    \Psi(r, \xi , \lambda, u)=&r^2 \lambda H_{r,\xi}  +\Delta_{r,\xi}^\Sigma H_{r,\xi} + H_{r,\xi}|\mathring{B}_{r,\xi}|^2+ H_{r,\xi} \,\mathrm{Ric}_{r,\xi}(\nu_{r,\xi}, \nu_{r,\xi})+\frac{1}{2}H_{r,\xi} P_{r,\xi}^2\\&+P_{r,\xi}( \nabla_{\nu_{r,\xi}} \tr k_{r,\xi} - \nabla_{\nu_{r,\xi}} k_{r,\xi}(\nu_{r,\xi},\nu_{r,\xi} ))  - 2\diver_\Sigma (P_{r,\xi}  k_{r,\xi}(\cdot, \nu_{r,\xi}))
    \end{split}
\end{equation}
where the right hand side is evaluated on $\Sigma_{\xi, r }(u)= \Phi^u_{\xi, r}(S_{\xi, r} )$ with respect to $g_{\xi,r}$  and $k_{\xi, r}$.  Since  $g_{\xi,r}$ and $k_{r,\xi}$  are conformal to $g$ and $k$ in the coordinates given by  $ \Theta_{\xi, r}$  and using how the different terms on  (\ref{resc}) change under this conformal transformation (for instance, $H_{r,\xi}= r H$, $\nu_{r,\xi}= r \nu $, $P_{r,\tau}= r P $ etc) one obtains  $ \Psi(r, \xi , \lambda, u)= r^3 (W_1+W_2 +\lambda H)(\Theta^*_{\xi, r} g, \Theta^*_{\xi, r} k, \Sigma)$. Hence,  if $\Psi(r, \xi , \lambda, u)= r^3 (W_1+W_2+\lambda H)(\Theta^*_{\xi, r} g, \Theta^*_{\xi, r} k, \Sigma) \in \Lambda_1(S_1(0)) $, then $(W_1+W_2+\lambda H)(g,  k, \Sigma) \in \Lambda_1(S_{\xi,r}) $. In particular,  by the previous lemma, we have:

\begin{proposition}\label{17EK}
    There  are constants $r_0 > 1$, $c > 1$, and $\epsilon > 0$ depending on $g$, $k$ and $\Tilde{\delta} \in (0, 1/2)$ such that for every $ \xi \in \mathbb{R}^3$ with $|\xi| < 1 - \Tilde{\delta} $ and every $r > r_0$ there exist \(u_{\xi,r} \in \mathcal{C}^{\infty}(S_{\xi,r})\)
and \(\lambda_{\xi,r} \in \mathbb{R}\) such that the following hold. The surface
\(\Sigma_{\xi,r} = \Sigma_{\xi,r}(u_{\xi,r})\)
has the properties
\[
W_1 +W_2+ \lambda_{\xi,r} H \in \Lambda_1(S_{\xi,r}) \quad \text{and} \quad |\Sigma_{\xi,r}| = 4 \pi r^2.
\]
There holds \(u_{\xi,r} \perp \Lambda_1(S_{\xi,r})\) and
\[
|u_{\xi,r}| + r | ^{S_{\xi,r}}\nabla u_{\xi,r}| + r^2 |^{S_{\xi,r}}\nabla^2 u_{\xi,r}| + r^3 |^{S_{\xi,r}}\nabla^3 u_{\xi,r}| + r^4 |^{S_{\xi,r}}\nabla^4 u_{\xi,r}| < c, \quad r^3 |\lambda_{\xi,r}| < c
\]
Moreover, if \(\lambda \in \mathbb{R}\) and \(\Sigma_{\xi,r}(u)\) with \(u \perp \Lambda_1(S_{\xi,r})\) are such that
\[
W_1 +W_2+ \lambda H \in \Lambda_1(S_{\xi,r}), \quad |\Sigma_{\xi,r}(u)| = 4 \pi r^2,
\]
and
\[
|u| + r |^{S_{\xi,r}}\nabla u| + r^2 |^{S_{\xi,r}}\nabla^2 u| + r^3 |^{S_{\xi,r}}\nabla^3 u| + r^4 |^{S_{\xi,r}}\nabla^4 u| < \epsilon r,\quad r^3 |\lambda| < \epsilon r,
\]
then \(u = u_{\xi,r}\) and \(\lambda = \lambda_{\xi,r}\). Furthermore it holds that \[
(\Bar{D}u)|_{(\xi, r)} = \mathcal{O} (r^{-1}) \quad \text{and} \quad u'|_{(\xi, r)} = \mathcal{O} (r^{-2}),
\]
where \(\Bar{D}\) and the dash indicate differentiation with respect to the parameters \(\xi\) and \(r\), respectively.
\end{proposition}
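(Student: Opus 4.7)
The strategy, following Eichmair--Koerber \cite{eichko}, is to reduce the proposition to the preceding lemma by applying it on the unit sphere $S_1(0)$ to the rescaled data $(g_{\xi,r},k_{\xi,r})$ and then transporting the solution back through the homothety $\Theta_{\xi,r}(y)=r(\xi+y)$. The bridge is exactly the identification already recorded in the text: $\Psi(r,\xi,\lambda,u)\in\Lambda_1(S_1(0))$ if and only if $(W_1+W_2+\lambda H)(g,k,\Sigma_{\xi,r}(u))\in\Lambda_1(S_{\xi,r})$.

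First, I would verify that the rescaled data lie in the implicit-function neighborhood $\mathcal{U}_1\times\mathcal{U}_2$ produced by the previous lemma. Choosing $\gamma>(1-\tilde{\delta})^{-1}$ keeps the reference annulus $\{\gamma^{-1}\le|y|\le\gamma\}$ uniformly away from $\{y=-\xi\}$ for every $|\xi|<1-\tilde{\delta}$; the Schwarzschild asymptotics combined with \eqref{assumpi} then give $\|g_{\xi,r}-\delta\|_{\mathcal{C}^{3,\alpha}}+\|k_{\xi,r}\|_{\mathcal{C}^{3,\alpha}}=\mathcal{O}(r^{-1})$ on this annulus. Applying the previous lemma yields $\tilde{u}_{\xi,r}\in\Lambda_1(S_1(0))^\perp$ and $\tilde{\lambda}_{\xi,r}\in\mathbb{R}$ solving the projected equation with unit-sphere area $4\pi$; smoothness of the solution map at $(\delta,0)$, where $(u,\lambda)$ both vanish, upgrades this to $\|\tilde{u}_{\xi,r}\|_{\mathcal{C}^{4,\alpha}}+|\tilde{\lambda}_{\xi,r}|=\mathcal{O}(r^{-1})$.

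Next, I would pull back through $\Theta_{\xi,r}$ and track the scaling weights. Since $\Theta_{\xi,r}$ is a Euclidean dilation of ratio $r$, the normal-graph identification gives $u_{\xi,r}(x)=r\,\tilde{u}_{\xi,r}(r^{-1}(x-r\xi))$, and each Euclidean derivative adds a factor $r^{-1}$; combined with the $\mathcal{O}(r^{-1})$ control of $\tilde{u}_{\xi,r}$ this produces the weighted bound $|u_{\xi,r}|+r|{^\delta\nabla u_{\xi,r}}|+\cdots+r^4|{^\delta\nabla^4 u_{\xi,r}}|=\mathcal{O}(1)$. The identity $\Psi=r^3(W_1+W_2+\lambda H)$ forces $\lambda_{\xi,r}=r^{-2}\tilde{\lambda}_{\xi,r}$, whence $r^3|\lambda_{\xi,r}|=r|\tilde{\lambda}_{\xi,r}|=\mathcal{O}(1)$; the area condition $|\Sigma_{\xi,r}|=4\pi r^2$ and the orthogonality $u_{\xi,r}\perp\Lambda_1(S_{\xi,r})$ are immediate from their unit-sphere counterparts. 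Uniqueness in the claimed smallness window follows by rescaling any competing $(u,\lambda)$ back to $(\tilde{u},\tilde{\lambda})\in\mathcal{V}\times I$ and invoking the uniqueness clause of the previous lemma. For the parameter derivatives, I would differentiate $(g_{\xi,r},k_{\xi,r})$ in $\xi$ and $r$: the $\xi$-derivative brings $r\,\partial g=\mathcal{O}(r^{-1})$ since $\partial g=\mathcal{O}(|x|^{-2})$, while the $r$-derivative brings $(\xi+y)\cdot\partial g=\mathcal{O}(r^{-2})$; the chain rule, combined with smoothness of $(u,\lambda)$ in $(g,k)$, then yields $\bar{D}u|_{(\xi,r)}=\mathcal{O}(r^{-1})$ and $u'|_{(\xi,r)}=\mathcal{O}(r^{-2})$.

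The work here is essentially organizational: one has to verify that the single neighborhood $\mathcal{U}_1\times\mathcal{U}_2$ from the previous lemma is wide enough to contain $(g_{\xi,r},k_{\xi,r})$ uniformly over $|\xi|<1-\tilde{\delta}$ and $r>r_0$, and that every weighted derivative estimate is carried consistently across the homothety. No smallness of the parameters $\eta_i$ is used at this step—these will enter only the subsequent analysis of the reduction function $G_r$.
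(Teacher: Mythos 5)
Your route is the same as the paper's: Proposition \ref{17EK} is obtained there exactly by rescaling with $\Theta_{\xi,r}$, using the identity $\Psi(r,\xi,\lambda,u)=r^3(W_1+W_2+\lambda H)(\Theta^*_{\xi,r}g,\Theta^*_{\xi,r}k,\Sigma)$, and invoking the preceding implicit-function-theorem lemma on $S_1(0)$; your verification that $(g_{\xi,r},k_{\xi,r})$ lies in $\mathcal{U}_1\times\mathcal{U}_2$ uniformly in $|\xi|<1-\Tilde{\delta}$, the scaling bookkeeping $u_{\xi,r}=r\,\Tilde{u}_{\xi,r}$, $\lambda_{\xi,r}=r^{-2}\Tilde{\lambda}_{\xi,r}$, and the uniqueness-by-rescaling argument are precisely the details the paper leaves implicit, and they are correct. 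One small point you inherit from the paper: the decay hypotheses control only two derivatives of $k$, so the rescaled tensor $k_{\xi,r}$ is bounded in $\mathcal{C}^{2,\alpha}$ rather than $\mathcal{C}^{3,\alpha}$; since $W_2$ involves only one derivative of $k$ this is harmless, but the function space $\mathcal{G}_2$ should be read accordingly.

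The one step that does not go through as written is the final clause on the parameter derivatives. With your own identification $u_{\xi,r}(x)=r\,\Tilde{u}_{\xi,r}(r^{-1}(x-r\xi))$, smooth dependence of the solution map gives $\Bar{D}_\xi\Tilde{u}=\mathcal{O}(r^{-1})$ (since $\partial_\xi g_{\xi,r}=\mathcal{O}(r^{-1})$ and $\partial_\xi k_{\xi,r}=\mathcal{O}(r^{-1})$) and $\partial_r\Tilde{u}=\mathcal{O}(r^{-2})$; hence for the graph function $u$ that satisfies the first display of the proposition one only gets $\Bar{D}u=r\,\Bar{D}\Tilde{u}=\mathcal{O}(1)$ and $u'=\Tilde{u}+r\,\partial_r\Tilde{u}=\mathcal{O}(r^{-1})$, one power of $r$ weaker than what you assert. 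In fact $\Bar{D}u=\mathcal{O}(r^{-1})$ cannot hold for that $u$ uniformly in $|\xi|<1-\Tilde{\delta}$: by Lemma \ref{expansquafoli} the $\mathcal{O}(1)$ part of $u$ contains the $\xi$-dependent Legendre series, whose $\xi$-derivative is of size $\mathcal{O}(|\xi|)$, so the stronger bound only returns when $|\xi|=\mathcal{O}(r^{-1})$. So either the derivative estimates are to be understood for the rescaled (unit-sphere) function $\Tilde{u}=u/r$, in which case your chain-rule computation does give $\mathcal{O}(r^{-1})$ and $\mathcal{O}(r^{-2})$ but you must say so explicitly, or they refer to $u$ itself, in which case your argument does not deliver them and one needs either the restriction to small $|\xi|$ or the finer analysis of \cite{eichko}. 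As it stands, the factor-of-$r$ bookkeeping in this last step is inconsistent with the normalization used in the rest of your proof, and this is the only genuine gap.
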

We denote  \(u_{\xi,r}\) by \(u\), \(\lambda_{\xi,r}\) by \(\lambda\), and \(\Lambda_\ell(S_{\xi,r})\) by \(\Lambda_\ell\)
for \(\ell = 0, 1\). Now just as in \cite[Lemma 19]{eichko} we obtain.
\begin{lemma}
    There holds 
    $P_{\Lambda_0} u = -2 + \mathcal{O}(r^{-1}) $  if  $|\xi|<1- \Tilde{\delta}$.
\end{lemma}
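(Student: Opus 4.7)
The plan is to read off the identity from the area constraint $|\Sigma_{\xi,r}(u_{\xi,r})|_g = 4\pi r^2$ supplied by Proposition \ref{17EK}. Since the area functional depends on neither $k$ nor the Lagrange multiplier $\lambda$, this is precisely the computation of \cite[Lemma 19]{eichko}; I record the key bookkeeping.

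First, using the conformal structure $g_{ij} = \phi^4 \delta_{ij} + \sigma_{ij}$ with $\phi = 1 + 1/|x|$ (recall $m=2$) and $\sigma_{ij} = \mathcal{O}(|x|^{-2})$, together with the normal graph parametrization $\Phi^u_{\xi,r}(x) = x + u(x)\,n(x)$ where $n(x) = r^{-1}(x - r\xi)$, the uniform bounds of Proposition \ref{17EK} force all contributions of $\sigma_{ij}$, of $|\nabla u|^2$, and of the higher Taylor coefficients of $\phi^4$ into an $\mathcal{O}(1)$ remainder upon integration. The resulting leading expansion is
\begin{equation*}
  |\Sigma_{\xi,r}(u)|_g = \int_{S_{\xi,r}} \bigl(1 + 4/|x|\bigr)\bigl(1 + 2u/r\bigr)\, d\mu_{S_{\xi,r}} + \mathcal{O}(1).
\end{equation*}

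The key analytic input is the Newton-potential identity
\begin{equation*}
  \int_{S_{\xi,r}} \frac{1}{|x|}\, d\mu_{S_{\xi,r}} = 4\pi r \qquad\text{for all } |\xi| < 1,
\end{equation*}
valid because the origin lies inside $S_{\xi,r}$. Writing $c := P_{\Lambda_0} u$ and $u = c + v$ with $v$ of zero mean on $S_{\xi,r}$, the displayed expansion becomes
\begin{equation*}
  4\pi r^2 + 16\pi r + 8\pi r\, c + \mathcal{O}(1) = 4\pi r^2,
\end{equation*}
from which $c = -2 + \mathcal{O}(r^{-1})$.

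I do not expect any real obstacle. The only careful point is verifying that each non-leading contribution---the $\sigma$-term, the cross product $(u/r)\cdot(1/|x|)$, the quadratic $u^2/r^2$, and the area-element correction from $|\nabla u|^2/r^2$---survives at most at order $\mathcal{O}(1)$ after integration over a sphere of area $4\pi r^2$. This is guaranteed by the asymptotically Schwarzschild decay of Definition \ref{asymptoticsc} and the uniform bounds $|u| + r|\nabla u| + r^2|\nabla^2 u| < c$ from Proposition \ref{17EK}. Because none of the $k$-dependent terms enter the area functional or the graph ansatz $\Phi^u_{\xi,r}$, the Willmore-case argument transfers without modification.
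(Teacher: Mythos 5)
Your argument is correct and is essentially the paper's own: the paper simply invokes \cite[Lemma 19]{eichko}, whose proof is exactly this expansion of the area constraint $|\Sigma_{\xi,r}(u)|=4\pi r^2$ using $\phi^4=1+4/|x|+\mathcal{O}(|x|^{-2})$ (with $m=2$), the Newton-potential identity $\int_{S_{\xi,r}}|x|^{-1}\,d\mu=4\pi r$ for $|\xi|<1$, and the uniform bounds on $u$ from Proposition \ref{17EK}, with the observation that $k$ never enters the area functional. No gaps; the bookkeeping of the $\sigma$-, cross-, and gradient-terms at order $\mathcal{O}(1)$ is exactly what is needed.
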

As in \cite[Lemma 20]{eichko} we expand $u$ in spherical harmonics.   
\begin{lemma}\label{expansquafoli}
    If \(|\xi| < 1 - \Tilde{\delta}\), there holds
\[
\lambda = 4 r^{-3} + \mathcal{O} (r^{-4}),
\]
\[
W_1(\Sigma_{\xi,r}) + W_2(\Sigma_{\xi,r})+ \lambda H(\Sigma_{\xi,r}) = \mathcal{O} (r^{-5}),
\]
\[
u = -2 + 4 \sum^{\infty}_{\ell= 2} \frac{|\xi|^\ell}{\ell} P_\ell \left(-|\xi|^{-1} \delta (y, \xi) \right) + \mathcal{O}(r^{-1}).
\]
Where $P_\ell$ are Legendre Polynomials (see \cite[Appendix B]{eichko}).
\end{lemma}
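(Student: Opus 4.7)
The plan is to mimic the proof of Lemma 20 in \cite{eichko}, tracking the additional contribution $W_2$ generated by $k\neq 0$ and absorbing it into error terms. The key quantitative observation is that under the decay assumption \eqref{assumpi}, $W_2$ sits strictly below the scale relevant for the expansions of $\lambda$ and $u$, so the leading asymptotics come entirely from the Willmore part.

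First I would invoke Proposition \ref{17EK} to fix $u=u_{\xi,r}\perp\Lambda_1$ and $\lambda=\lambda_{\xi,r}$ with the stated $\mathcal{C}^4$-bounds, so that $W_1+W_2+\lambda H\in\Lambda_1$ on $\Sigma_{\xi,r}$, and combine it with the previous lemma to record $P_{\Lambda_0} u=-2+\mathcal{O}(r^{-1})$. The task then splits into three pieces: project the equation onto $\Lambda_0$ and combine with the area constraint $|\Sigma_{\xi,r}|=4\pi r^2$ to read off $\lambda$; bound the $\Lambda_1$-residual $W_1+W_2+\lambda H$; and project onto $(\Lambda_0\oplus\Lambda_1)^\perp$, where $-\Delta^{\mathbb{S}^2}(-\Delta^{\mathbb{S}^2}-2)$ is invertible, to solve for the higher spherical-harmonic modes of $u$.

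Next I would quantify the $k$-contribution. On $\Sigma_{\xi,r}$ one has $H=\mathcal{O}(r^{-1})$ and, by \eqref{assumpi}, $k=\mathcal{O}(r^{-2})$, $\nabla k=\mathcal{O}(r^{-3})$, and $P=\tr_\Sigma k=\mathcal{O}(r^{-2})$. Each summand of
\[
W_2=P(\nabla_\nu\tr k-\nabla_\nu k(\nu,\nu))-2\diver_\Sigma(P\,k(\cdot,\nu))+\tfrac12 HP^2
\]
is then a product whose pointwise size is $\mathcal{O}(r^{-5})$; hence $W_2=\mathcal{O}(r^{-5})$ pointwise, and the same holds after projecting onto $\Lambda_0$ or $(\Lambda_0\oplus\Lambda_1)^\perp$. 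For the Willmore piece $W_1+\lambda H$ I would argue exactly as in the proof of \cite[Lemma 20]{eichko}: use the Schwarzschild asymptotics of $g$, rescale via $\Theta_{\xi,r}$, and expand in powers of $r^{-1}$. Projecting onto $\Lambda_0$ and dividing by $\int H$ yields $\lambda=4r^{-3}+\mathcal{O}(r^{-4})$ (with $m=2$); inverting $-\Delta^{\mathbb{S}^2}(-\Delta^{\mathbb{S}^2}-2)$ on the $(\Lambda_0\oplus\Lambda_1)^\perp$-part produces the stated Legendre polynomial expansion, via the expansion of the Schwarzschild conformal factor centered at $r\xi$ recorded in \cite[Appendix B]{eichko}. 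Putting these together gives $W_1+W_2+\lambda H=\mathcal{O}(r^{-5})$.

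The main obstacle is making the bound on $W_2$ sharp enough. A naive estimate $W_2=\mathcal{O}(r^{-4})$ would contaminate the leading order of $\lambda$ and the $\mathcal{O}(1)$ level of $u$, destroying both the displayed asymptotic for $\lambda$ and the closed form for $u$. The improvement to $\mathcal{O}(r^{-5})$ rests on using the full strength of \eqref{assumpi} (both on $k$ and on one derivative of $k$) and on keeping track of the fact that each factor $P$, $k(\cdot,\nu)$, $\nabla k$ already contributes an $r^{-2}$ or $r^{-3}$, so no summand of $W_2$ decays slower than $H\cdot P^2\sim r^{-5}$. Once this is in hand, the remainder is essentially a line-by-line transcription of \cite{eichko} with the $W_2$-contributions absorbed into the indicated $\mathcal{O}$-errors.
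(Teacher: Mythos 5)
Your proposal is correct and follows essentially the same route as the paper: the paper's proof likewise begins by observing that the decay of $k$ gives $W_2(\Sigma_{\xi,r})=\mathcal{O}(r^{-5})$, and then runs the $k=0$ argument of \cite[Lemma 20]{eichko} (expansion of $W_1$ about $S_{\xi,r}$, the $\Lambda_1$-residual from Proposition \ref{17EK}, and inversion of $-\Delta^{\mathbb{S}^2}(-\Delta^{\mathbb{S}^2}-2)$) with the $W_2$-contribution absorbed into the $\mathcal{O}(r^{-5})$ error. Your sharpened bookkeeping of each summand of $W_2$ is exactly the point the paper relies on, so no gap.
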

\begin{proof}
    Note that $W_2(\Sigma_{\xi,r}) = \mathcal{O}(r^{-5})$, then the proof follows roughly as in the case $k=0$ of \cite[Lemma 20]{eichko}.
    
   We denote by $^\delta\Delta_{\xi,r}$ the  laplacian on $S_{\xi,r}$ with respect to the Euclidean metric $\delta$.   We have by the decay of the metric 
    $$W_1(\Sigma_{\xi,r}) -W_1(S_{\xi,r}) = - \,^\delta\Delta^2_{\xi,r} u-\frac{2}{r^2}\, ^\delta\Delta_{\xi,r}u +\mathcal{O}(r^{-5})  $$
    and $$ H= \frac{2}{r} +\mathcal{O}(r^{-2}) $$
    By Proposition \ref{17EK}
$$W_1(\Sigma_{\xi,r}) + W_2(\Sigma_{\xi,r})+ \lambda H(\Sigma_{\xi,r}) =Y_1 \in \Lambda_1 .$$
Then, using that by the decay of $k$, $W_2(\Sigma_{\xi,r})=\mathcal{O}(r^{-5})$ and
\begin{equation}
    \begin{split}
      ^\delta \Delta^2_{\xi,r} u+\frac{2}{r^2}\, ^\delta\Delta_{\xi,r} u  -\frac{2}{r}\lambda &= W_1(S_{\xi,r})-W_1(\Sigma_{\xi,r})  -\frac{2}{r}\lambda  +\mathcal{O}(r^{-5})  \\
     &=  W_1(S_{\xi,r})-W_1(\Sigma_{\xi,r}) -H(\Sigma_{\xi,r}) \lambda +\mathcal{O}( \lambda r^{-2})  +\mathcal{O}(r^{-5}) \\
     &= W_1(S_{\xi,r}) - Y_1 + \mathcal{O}(r^{-5})\\
    \end{split}
\end{equation}
The rest of the proof follows as for the $k=0$ case. 
\end{proof}
We consider the following modification to the  Hawking functional. 
\begin{equation}
    F_r(\Sigma ):=r^2 \left( \int_\Sigma H^2 -P^2 d\mu -16 \pi +64\pi r^{-1} \right)
\end{equation}
and we  define the function
\[
G_r : \{\xi \in \mathbb{R}^3 : |\xi| < 1 - \Tilde{\delta} \} \to \mathbb{R} \quad \text{given by} \quad G_r(\xi) = F_r(\Sigma_{\xi,r}).
\]
Then as in \cite[Lemma 21]{eichko}, it is  direct to see that:
\begin{lemma}\label{lemma21}
    There is \(r_0 > 1\) depending on \(g\), $k$ and \(\Tilde{\delta} \in (0, 1/2)\) with the following property. Let
\(r > r_0\). Then \(\Sigma_{\xi,r}\) is an area-constrained critical sphere of the Hawking functional if and only if \(\xi\) is a critical point of \(G_r\).
\end{lemma}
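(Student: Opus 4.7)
The plan is to compute $dG_r$ as a functional of $\xi$ and show that its vanishing is equivalent to the $\Lambda_1(S_{\xi,r})$-component of the Hawking Euler–Lagrange operator $W_1+W_2+\lambda H$ (which by Proposition~\ref{17EK} is the only component left to kill) vanishing on $\Sigma_{\xi,r}$. Concretely, I denote by $\alpha^{(i)}\nu$ the normal component of the variation $\partial\Sigma_{\xi,r}/\partial\xi^i$. Because $|\Sigma_{\xi,r}|=4\pi r^2$ is independent of $\xi$ by Proposition~\ref{17EK}, every $\xi$-variation is area-preserving and hence
\[
\int_{\Sigma_{\xi,r}} H\,\alpha^{(i)}\,d\mu=0,\qquad i=1,2,3.
\]

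Combining the standard first variation of $\int H^2\,d\mu$ with the identity \eqref{firstvary} for $\int P^2\,d\mu$ and using that $-2P\operatorname{div}_\Sigma k(\cdot,\nu)-2k(\nabla^\Sigma P,\nu)=-2\operatorname{div}_\Sigma(Pk(\cdot,\nu))$, I obtain
\[
\frac{\partial G_r}{\partial \xi^i}
= 2r^2\!\int_{\Sigma_{\xi,r}}\!(W_1+W_2)\,\alpha^{(i)}\,d\mu
= 2r^2\!\int_{\Sigma_{\xi,r}}\!(W_1+W_2+\lambda_{\xi,r} H)\,\alpha^{(i)}\,d\mu,
\]
where in the second equality I added $\lambda_{\xi,r}\int H\alpha^{(i)}\,d\mu=0$. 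By Proposition~\ref{17EK}, $W_1+W_2+\lambda_{\xi,r} H\in \Lambda_1(S_{\xi,r})$, so the integrand is orthogonal to everything outside $\Lambda_1$; hence $\partial_{\xi^i}G_r$ only sees the $\Lambda_1(S_{\xi,r})$-projection of $\alpha^{(i)}$.

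The remaining task is to verify that the three functions $\alpha^{(1)},\alpha^{(2)},\alpha^{(3)}$ have $\Lambda_1(S_{\xi,r})$-projections that form a nondegenerate basis of $\Lambda_1(S_{\xi,r})$ for $r$ large. To leading order, moving $\xi$ along $e_i$ translates $S_{\xi,r}$ by $re_i$, so on the unperturbed sphere $\alpha^{(i)}=\langle x-r\xi,e_i\rangle$, which is precisely the $i$-th coordinate generator of $\Lambda_1(S_{\xi,r})$. The correction from the graph perturbation $u_{\xi,r}$ is controlled by $\bar D u_{\xi,r}=\mathcal O(r^{-1})$ from Proposition~\ref{17EK}, so the matrix $A_{ij}:=\int_{\Sigma_{\xi,r}}\alpha^{(i)}\phi_j\,d\mu$ (with $\{\phi_j\}$ an $L^2$-orthonormal basis of $\Lambda_1(S_{\xi,r})$) is a small perturbation of an invertible diagonal matrix, hence invertible for $r>r_0$. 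Therefore $\partial_{\xi^i}G_r=0$ for all $i$ forces the $\Lambda_1$-projection of $W_1+W_2+\lambda_{\xi,r} H$ to vanish, and since the expression already lies in $\Lambda_1(S_{\xi,r})$, it vanishes identically — equivalently, $\Sigma_{\xi,r}$ satisfies \eqref{eulag}. The converse direction is immediate from the same computation.

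The main obstacle is the nondegeneracy step: one must confirm that the variation field $\partial\Sigma_{\xi,r}/\partial\xi^i$, including the contribution from the implicitly defined perturbation $u_{\xi,r}$, still spans a three-dimensional subspace whose $\Lambda_1(S_{\xi,r})$-projection is full rank. This is the place where the a priori bound $\bar D u_{\xi,r}=\mathcal O(r^{-1})$ delivered by the Lyapunov–Schmidt reduction is essential, but otherwise the argument is a direct transcription of the Willmore case \cite[Lemma 21]{eichko}, the only new input being that the additional $W_2$-term is absorbed into the Euler–Lagrange operator and, by construction, remains in $\Lambda_1(S_{\xi,r})$.
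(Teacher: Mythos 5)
Your argument is correct and is essentially the paper's own: the paper proves Lemma~\ref{lemma21} by direct transcription of \cite[Lemma 21]{eichko}, i.e.\ exactly the computation you give — the fixed area $4\pi r^2$ kills the $\lambda_{\xi,r}H$ contribution, the Lyapunov--Schmidt output $W_1+W_2+\lambda_{\xi,r}H\in\Lambda_1(S_{\xi,r})$ reduces $\bar D_\xi G_r$ to a pairing of that $\Lambda_1$-element with the normal speeds $\alpha^{(i)}$, and the bound $\bar D u_{\xi,r}=\mathcal{O}(r^{-1})$ makes the pairing matrix a small perturbation of an invertible one for $r$ large, so $\bar D_\xi G_r=0$ iff \eqref{eulag} holds exactly. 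Your identification of the nondegeneracy of the $\Lambda_1$-projection of the variation fields as the one genuinely new point to check is precisely where the quoted a priori estimates from Proposition~\ref{17EK} enter, and the absorption of the $W_2$-term into the projected operator is the only modification relative to the Willmore case, as in the paper.
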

With this result, finding critical surfaces of the Hawking energy that are deformations of coordinate spheres reduces to identifying critical points of the function $G_r$. To locate such critical points, we require a more explicit form of $G_r$. In \cite[Lemma 22]{eichko}, the asymptotic expansion of 
$G_r$ was computed in the absence of the tensor $k$. In our case, a similar expansion is derived using the same procedure, but with an additional term accounting for $k$.
\begin{lemma}
    If \(|\xi| < 1 - \Tilde{\delta}\), there holds
\[
\begin{split}
G_r(\xi) =& 64 \pi + \frac{32 \pi}{1 - |\xi|^2} - 48 \pi |\xi|^{-1} \log \left( \frac{1 + |\xi|}{1 - |\xi|} \right) - 128 \pi \log(1 - |\xi|^2) + 2 r \int_{\mathbb{R}^3 \setminus B_r(r \xi)} \mathrm{Sc} \, dv_\delta \\
&-r^2 \int_{S_r(r \xi)} \pi(\nu,\nu)^2 d\mu_\delta +  \mathcal{O} (r^{-1}).
\end{split}
\]
where  $\pi= k-\tr k \, g $ is the canonically conjugated momentum and the trace is taken with respect to the Euclidean metric.
\end{lemma}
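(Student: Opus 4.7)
The plan is to split the Hawking functional into its Willmore contribution and the new $P^{2}$ contribution, and reduce to the expansion already available from \cite[Lemma 22]{eichko}. Explicitly, I would write
\[
G_{r}(\xi) \;=\; \Bigl(r^{2}\!\int_{\Sigma_{\xi,r}} H^{2}\,d\mu - 16\pi r^{2} + 64\pi r\Bigr) \;-\; r^{2}\!\int_{\Sigma_{\xi,r}} P^{2}\,d\mu,
\]
and treat the two brackets separately. The first bracket is exactly the quantity that Eichmair--Koerber expand. Their argument, which uses Gauss--Bonnet on $\Sigma_{\xi,r}$, the Gauss equation, and an explicit integration of the Schwarzschild curvatures, delivers the first five terms of the claimed formula with an $\mathcal{O}(r^{-1})$ remainder. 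A first task is to verify that their proof still functions under our relaxed decay assumptions on $\mathrm{Sc}$, in which little $o$ is replaced by big $\mathcal{O}$ times a small constant $\eta_{i}$; since the relevant estimates are integrability bounds on $r^{2}\mathrm{Sc}$ and on the odd part of $\mathrm{Sc}$, the replacement does not break the argument.

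It then remains to show
\[
r^{2}\!\int_{\Sigma_{\xi,r}} P^{2}\,d\mu \;=\; r^{2}\!\int_{S_{r}(r\xi)} \pi(\nu,\nu)^{2}\,d\mu_{\delta} \;+\; \mathcal{O}(r^{-1}).
\]
The key algebraic identity, which accounts for the appearance of $\pi(\nu,\nu)$ in the stated formula, is
\[
P \;=\; \tr_{g_{\Sigma}} k \;=\; \tr_{g} k - k(\nu,\nu) \;=\; -\pi(\nu,\nu),
\]
so $P^{2}=\pi(\nu,\nu)^{2}$. Under the assumed decay $|k|=\mathcal{O}(\eta_{3}|x|^{-2})$ and $|\nabla k|=\mathcal{O}(\eta_{3}|x|^{-3})$, the integrand has size $r^{-4}$ and the surface area is $\sim r^{2}$, so the integral is $\mathcal{O}(r^{-2})$ and the $r^{2}$ prefactor makes the leading term $\mathcal{O}(1)$. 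The replacements $\Sigma_{\xi,r}\rightsquigarrow S_{r}(r\xi)$ and $d\mu\rightsquigarrow d\mu_{\delta}$ cost a remainder of order $r^{-3}$ from two sources: first, the metric discrepancy $g-\delta=\mathcal{O}(r^{-1})$ on the asymptotic region contributes multiplicatively to both the area element and the unit normal; second, by Proposition \ref{17EK} the graph function $u$ satisfies $u=\mathcal{O}(1)$ and $^{\delta}\nabla u=\mathcal{O}(r^{-1})$, so a first-order Taylor expansion of $\pi(\nu,\nu)^{2}$ in the normal direction, together with $|\nabla k|=\mathcal{O}(r^{-3})$, yields a relative error of $\mathcal{O}(r^{-1})$ in the integrand. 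Multiplying by the $r^{2}$ prefactor produces the stated $\mathcal{O}(r^{-1})$ remainder.

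The main obstacle is the line-by-line check that the Eichmair--Koerber Willmore expansion continues to hold under our weakened decay assumptions, where the constants $\eta_{1},\eta_{2}$ are small but nonzero rather than zero. The leading terms $64\pi+\frac{32\pi}{1-|\xi|^{2}}-48\pi|\xi|^{-1}\log\frac{1+|\xi|}{1-|\xi|}-128\pi\log(1-|\xi|^{2})$ come from an explicit computation with the Schwarzschild background and are therefore insensitive to the perturbation $\sigma$ and to the decay constants of $\mathrm{Sc}$; the contributions that do involve $\mathrm{Sc}$ enter only through quantities of the form $r\int\mathrm{Sc}\,dv_{\delta}$ which are already of the correct order. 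By contrast, the $P^{2}$ analysis is a straightforward perturbation argument, since $\pi$ is manifestly small and both $\Sigma_{\xi,r}$ and $g$ lie close enough to $S_{r}(r\xi)$ and $\delta$ for all error terms to be controlled by elementary estimates.
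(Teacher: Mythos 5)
Your proposal is correct and follows essentially the same route as the paper, which offers no separate proof of this lemma but simply notes that the expansion is obtained by the procedure of \cite[Lemma 22]{eichko} with the extra term coming from $P^2$; your decomposition into the Willmore bracket plus $-r^2\int_{\Sigma_{\xi,r}}P^2\,d\mu$, the identity $P=-\pi(\nu,\nu)$, and the $\mathcal{O}(r^{-1})$ error accounting for replacing $\Sigma_{\xi,r}$, $g$, $d\mu$ by $S_r(r\xi)$, $\delta$, $d\mu_\delta$ is exactly the intended argument. The check that the Eichmair--Koerber expansion survives the relaxed $\mathcal{O}(\eta_i)$-type hypotheses is also consistent with the paper, since the expansion itself only uses the asymptotically Schwarzschild decay, the $\eta_i$-conditions entering only later in the convexity analysis.
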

We have also that $G_r$ is bounded:
\begin{lemma}\label{bounded}
    There are \(c > 0\) and \(r_0 > 1\) which depend only on \(g\), $k$ and \( \hat{\delta} \in (0, 1/2)\) such that
\[
\|G_r\|_{\mathcal{C}^3}(\{\xi \in \mathbb{R}^3 : |\xi| \leq 1 - \hat{\delta} ) < c
\]
for every \(r > r_0\).
\end{lemma}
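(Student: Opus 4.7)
My plan is to establish the $\mathcal{C}^3$-bound term by term in the expansion of $G_r$ provided by the previous lemma. The four purely $|\xi|$-dependent summands
\[
64\pi + \frac{32\pi}{1-|\xi|^2} - 48\pi|\xi|^{-1}\log\tfrac{1+|\xi|}{1-|\xi|} - 128\pi\log(1-|\xi|^2)
\]
are real-analytic on the open unit ball (the apparent singularity at $\xi=0$ is removable via the Taylor expansion $|\xi|^{-1}\log\tfrac{1+|\xi|}{1-|\xi|} = 2 + \tfrac{2}{3}|\xi|^2 + \mathcal{O}(|\xi|^4)$), so they are uniformly $\mathcal{C}^3$-bounded on the compact set $\{|\xi|\le 1-\hat\delta\}$ by a constant depending only on $\hat\delta$.

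For the two integral terms I would first move all parameter dependence into the integrand via change of variables. Writing $y=x-r\xi$ gives $2r\int_{\mathbb{R}^3\setminus B_r(r\xi)}\mathrm{Sc}\,dv_\delta = 2r\int_{|y|>r}\mathrm{Sc}(y+r\xi)\,dy$, and parameterizing $S_r(r\xi)$ by $\omega\in\mathbb{S}^2$ yields $r^2\int_{S_r(r\xi)}\pi(\nu,\nu)^2\,d\mu_\delta = r^4\int_{\mathbb{S}^2}\pi(r(\omega+\xi))(\omega,\omega)^2\,d\sigma(\omega)$. Each $\xi$- or $r$-derivative then introduces a factor $r$ acting on a Cartesian derivative of $\mathrm{Sc}$ or $\pi$. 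The $\mathcal{C}^4$-asymptotically Schwarzschild hypothesis and (\ref{assumpi}) give $|\partial^J \mathrm{Sc}|=\mathcal{O}(|x|^{-4-|J|})$ and $|\partial^J k|=\mathcal{O}(|x|^{-2-|J|})$ for $|J|\le 3$, and since $|\omega+\xi|\ge \hat\delta$ uniformly for $|\xi|\le 1-\hat\delta$, the radial integral satisfies $|\Bar{D}^J A|\le C r^{|J|+1}\int_{|y|>r}|y+r\xi|^{-4-|J|}\,dy\le C$; the surface integral is bounded analogously, and mixed $\partial_r$-derivatives (including boundary contributions from differentiating the integration domain in the radial integral) are treated in the same way.

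Finally, the remainder $\mathcal{O}(r^{-1})$ is handled using the smooth dependence of $u_{\xi,r}$ and $\lambda_{\xi,r}$ on $(\xi,r)$ granted by Proposition \ref{17EK}. Extending the cited bounds $\Bar{D}u_{\xi,r}=\mathcal{O}(r^{-1})$, $u'_{\xi,r}=\mathcal{O}(r^{-2})$ inductively by differentiating the implicit equation $W_1+W_2+\lambda H\in\Lambda_1$, and invoking the uniform-in-$\xi$ boundedness of the inverse linearization on $\Lambda_1^\bot$ on compact subsets of $\{|\xi|<1\}$, yields bounds of the form $|\Bar{D}^k\partial_r^j u_{\xi,r}|\le C r^{-1-j}$ and $|\Bar{D}^k\partial_r^j\lambda_{\xi,r}|\le C r^{-3-j}$ for $k+j\le 3$. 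Tracking these through the derivation of the expansion preserves the $\mathcal{O}(r^{-1})$ rate after three parameter derivatives. This last step is the main technical obstacle and the only genuinely new piece relative to the Willmore case in \cite{eichko}; it goes through because $P=\mathcal{O}(r^{-2})$ in the rescaled coordinates and (\ref{assumpi}) supplies the required higher derivatives of $k$, so the extra contributions from $W_2$ and $r^2\int_\Sigma P^2\,d\mu$ are of the same order as the Willmore terms already controlled.
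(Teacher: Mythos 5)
The paper itself records this lemma without proof (it is carried over from \cite[Lemma 23]{eichko}, with the new $k$-terms expected to be harmless), so your proposal has to stand on its own, and as written it has two genuine gaps. First, the decay you invoke, $|\partial^J\mathrm{Sc}|=\mathcal{O}(|x|^{-4-|J|})$ and $|\partial^J k|=\mathcal{O}(|x|^{-2-|J|})$ for $|J|\le 3$, is not available: $\mathcal{C}^4$-asymptotically Schwarzschild controls $\partial^J\sigma$ only for $|J|\le 4$, hence $\partial^J\mathrm{Sc}$ only for $|J|\le 2$, and Definition \ref{asymptoticsc} together with (\ref{assumpi}) controls $\partial^J k$ only for $|J|\le 2$. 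Since in your change-of-variables formulation every $\xi$- or $r$-derivative produces a factor $r$ acting on one more ambient derivative of $\mathrm{Sc}$ or $\pi$, a three-fold differentiation needs exactly the third derivatives you do not have. For the bulk term $2r\int_{\mathbb{R}^3\setminus B_r(r\xi)}\mathrm{Sc}\,dv_\delta$ this is repairable: convert the first $\xi$-derivative into a boundary integral over $S_r(r\xi)$ by the divergence theorem (this is how $\bar D G_{r,2}$ is handled in (\ref{tayexpsc})), after which only $\partial^J\mathrm{Sc}$ with $|J|\le 2$ enters. For the surface term $r^2\int_{S_r(r\xi)}\pi(\nu,\nu)^2\,d\mu_\delta$, however, the contribution where all three derivatives fall on $k$ in the normal direction cannot be integrated by parts away on the sphere, so your estimate for that term does not close under the stated hypotheses without an additional idea.

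Second, your term-by-term strategy tacitly assumes that the expansion of $G_r$ from the preceding lemma holds in $\mathcal{C}^3$, i.e.\ that the $\mathcal{O}(r^{-1})$ remainder stays $\mathcal{O}(1)$ (indeed $\mathcal{O}(r^{-1})$) after three parameter derivatives; that lemma only controls the value of $G_r$. You acknowledge this and propose to upgrade Proposition \ref{17EK} to bounds of the form $|\bar D^k\partial_r^j u_{\xi,r}|\le Cr^{-1-j}$ and then to ``track these through the derivation,'' but this is precisely the substance of the lemma and is asserted rather than carried out (the paper records only first-order bounds on $\bar D u$ and $u'$). The intended route, following \cite{eichko}, is to bound $G_r$ in $\mathcal{C}^3$ directly from its definition $G_r(\xi)=F_r(\Sigma_{\xi,r})$: uniform smooth dependence of $u_{\xi,r}$ on the rescaled data over the compact range $|\xi|\le 1-\hat\delta$, combined with the variational structure (the first $\xi$-derivative is a first-variation integral in which the area constraint eliminates the $\lambda H$ contribution, cf.\ (\ref{firstvary}) and Lemma \ref{lemma21}), rather than differentiating the asymptotic expansion. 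If you want to salvage your outline, you must either prove the higher-order bounds on $u_{\xi,r},\lambda_{\xi,r}$ you claim, or rearrange the derivatives (divergence theorem, tangential integration by parts, and the variational identities) so that no more than two ambient derivatives of $\mathrm{Sc}$ and $k$ ever appear.
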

Now we can obtain the following result, which is analogous to \cite[Lemma 24]{eichko}. 
\begin{lemma}\label{convexity}
There exist constants $ \Tilde{\eta}_i>0$, i=1,2,3 such that if for $\eta_i< \Tilde{\eta}_i $ it holds 
\begin{equation}
    x^i \partial_i (|x|^2 \mathrm{Sc}) \leq \mathcal{O} (\eta_1 |x|^{-2}),\quad  \mathrm{Sc}(x)-\mathrm{Sc}(-x) = \mathcal{O} (\eta_2 |x|^{-4})
\end{equation}
and 
\begin{equation}
k_{ij}(x) = \mathcal{O}(\eta_3 |x|^{-2}) , \quad (\nabla k)_{ij}(x)= \mathcal{O}(\eta_3 |x|^{-3}). 
\end{equation}
then there exist
\(\tau > 0\), \(\Tilde{\delta}_0 \in (0, 1/2)\), and \(r_0 > 1\), $\Tilde{\eta}_i>0$ for $i=1,2,3$ depending only on \(g\) and $k$ such that, provided \(r > r_0\) and $\Tilde{\eta}_i> \eta_i $,
\[
\Bar{D}^2 G_r \geq \tau \, \text{Id}
\]
holds on \(\{\xi \in \mathbb{R}^3 : |\xi| < \Tilde{\delta}_0\}\), where \(\Bar{D}\) denotes differentiation with respect to the parameter \(\xi\). Moreover, given \(\Tilde{\delta} \in (0, 1/2)\) and \(\Tilde{\delta}_1 \in (0, 1 - \Tilde{\delta})\), there is \(r_1 > r_0\) such
that \(G_r\) is strictly increasing in radial directions on \(\{\xi \in \mathbb{R}^3 : \Tilde{\delta}_1 < |\xi| < 1 - \Tilde{\delta}\}\) provided \(r > r_1\).
\end{lemma}
\begin{proof}
    We write
\[
G_r = G_1 + G_{r,2}+G_{r,3} + \mathcal{O} (r^{-1})
\]
where
\[
G_1(\xi) = 64 \pi + \frac{32 \pi}{1 - |\xi|^2} - 48 \pi |\xi|^{-1} \log \left( \frac{1 + |\xi|}{1 - |\xi|} \right) - 128 \pi \log(1 - |\xi|^2),
\]
\[
G_{r,2}(\xi) = 2 r \int_{\mathbb{R}^3 \setminus B_r(r \xi)} \mathrm{Sc} \, dv_\delta.
\]
and 
$$G_{r,3}(\xi)= -r^2 \int_{S_r(r \xi)} \pi(\nu,\nu)^2 d\mu_\delta.  $$
By using the same argument used in \cite[Lemma 24]{eichko} to estimate $\Bar{D}_\xi G_{r,2}$ we obtain
 \begin{equation}\label{tayexpsc}
       |\xi|^{-1} \xi^i (\partial_i G_{r,2})(\xi) = -2 r^2 \int_{S_{\xi, r}} |\xi|^{-1} \delta( \xi, \nu ) \mathrm{Sc} \, d\mu_\delta\geq -\mathcal{O}(\eta_1) -\mathcal{O}(\eta_2) 
 \end{equation}
 Like in (\ref{firstvary}) we have 
\begin{equation}
\begin{split}
    |\xi|^{-1} \xi^i (\partial_i G_{r,3})(\xi) =& - r^2 \int_{S_{\xi,r}} 2 rP  \delta( \frac{\xi}{|\xi|}, \nu ) ( \nabla_{\nu} \tr k - \nabla_{\nu} k(\nu,\nu )) +4r P    k(\nabla^{S_{\xi,r}} \delta( \frac{\xi}{|\xi|}, \nu ), \nu )\\
   &+ r \delta( \frac{\xi}{|\xi|}, \nu ) H P^2  d\mu_\delta\\
   =&-r^2 \int_{S_{\xi,r}} 2 \pi(\nu, \nu) r \delta( \frac{\xi}{|\xi|}, \nu ) \nabla_{\nu}\pi(\nu,\nu) +4 \pi(\nu, \nu)    k( \frac{\xi}{|\xi|} , \nu )\\
   &+ 2 \delta( \frac{\xi}{|\xi|}, \nu )  \pi(\nu, \nu)^2  d\mu_\delta\\
   =&-r^2 \int_{S_{r}(\xi r)}  2 W_2 \alpha d\mu_\delta
   \end{split}
\end{equation}
with $ \alpha= r \delta( \frac{\xi}{|\xi|}, \nu ) $ and where since $\pi = k-\tr k g$, $P=-\pi(\nu, \nu)$.

Now we consider a Taylor expansion around the origin of the integral expression, that is, we consider the function $f(\xi)=-r^2 \int_{S_{r}(\xi r)}  2 W_2 \alpha d\mu_\delta $ Then, applying Taylor’s theorem at $\xi=0$, we obtain
\begin{equation}\label{taylorexp}
\begin{split}
  |\xi|^{-1} \xi^i (\partial_i G_{r,3})(\xi) =& -r^2 \int_{S_{r}(0)}   2 W_2 \alpha d\mu_\delta- \sum_{i=1}^{3} \partial_i \left( r^2\int_{S_{r}(\xi r)}  2 W_2 \alpha d\mu_\delta\right)_{|\xi=0} \xi^i r \\
 &- \frac{r^2}{2} \Bar{D}^2 \left(\int_{S_{r}(\xi r)}  2 W_2 \alpha d\mu_\delta\right)_{|\xi = \Tilde{\xi}}(r \xi, r \xi).\\
 \end{split}
\end{equation}
where $\tilde{\xi}$ is a point on the segment between $0$ and $\xi$ (i.e.\ $\tilde{\xi}=t\xi$ for some $t\in(0,1)$), and by (\ref{secondvary}), the second term of the integral can be written as 
\begin{equation}
     -  r^2\int_{S_{r}(0)}   2 \mathcal{D}W_2\left(r \delta( \frac{\xi}{|\xi|}, \nu ), r \delta( \xi, \nu ) \right) d\mu_\delta .
\end{equation}
Recall that $\nu= x r^{-1}- \xi = \rho \frac{|x|}{r}- \xi$ on $S_r(r \xi)$, where $\rho= \frac{x}{|x|}$, then using the decay of $k$  and that  $\frac{1}{|x|} \leq \frac{1}{r|1-|\xi||}$ on $S_r(r \xi)$, we have 
\begin{equation}\label{estitaylor}
    \begin{split}
      -r^2 \int_{S_{r}(0)}   2 W_2 \alpha d\mu_\delta&=-  \mathcal{O}(\eta_3^2) \\
      -  r^2\int_{S_{r}(0)}   2 \mathcal{D}W_2\left(r \delta( \frac{\xi}{|\xi|}, \nu ), r \delta( \xi, \nu ) \right) d\mu_\delta &= -  \mathcal{O}(\eta_3 |\xi|)\\
      - \frac{r^2}{2} \Bar{D}^2 \left(\int_{S_{r}(\xi r)}  2 W_2 \alpha d\mu_\delta\right)_{|\xi = \Tilde{\xi}}(r \xi, r \xi)&= -  \mathcal{O}(\eta_3 |\xi|^2).
    \end{split}
\end{equation}
 Note that all the terms except the last one depend on the radial directions,  $\pi(\rho,\rho)$, $(\nabla_\rho \pi)(\rho,\rho)$ and $k(\rho,\cdot) $. We have that (\ref{taylorexp}) give us 
\begin{equation}\label{varG3}
\begin{split}
    |\xi|^{-1} \xi^i (\partial_i G_{r,3})(\xi) \geq& -\mathcal{O}(\eta_3^2) -\mathcal{O}(\eta_3 |\xi|). 
   \end{split}
\end{equation}
 One can see that $G_1$ is strictly convex and   strictly  increasing in radial direction with $G_1(0)=0$ and   
\[
|\xi|^{-1}  \xi^i (\partial_i G_1)(\xi )\geq 256 \pi |\xi|.
\]
 Putting everything together we have 
\begin{equation}\label{DGexp}
    \Bar{D}_\xi G_r = \Bar{D}_\xi G_1 + \Bar{D}_\xi G_{r,2}+\Bar{D}_\xi G_{r,3} \geq  256 \pi |\xi| -\mathcal{O}(\eta_1) -\mathcal{O}(\eta_2) -\mathcal{O}(\eta_3) - \mathcal{O}(\eta_3|\xi|).
\end{equation}
Consequently, there exists  \(\tilde\delta = \tilde\delta(\eta_i) > 0\) such that, if each \(\eta_i\) is sufficiently small and 
$\tilde\delta < |\xi| \ll 1$, then 
$\Bar{D}_\xi G_r(\xi) > 0$.

For convexity, as in \cite[Lemma 24]{eichko} one sees that 
\begin{equation}
    \Bar{D}^2_\xi G_{r,2} \geq -( \mathcal{O}(\eta_1 ) +c |\xi|) \text{Id}
\end{equation}
Now for $\Bar{D}^2_\xi G_{r,3}$ we need to consider the second variation (\ref{secondvary}) with $\alpha= \Tilde{\alpha}= r \delta( \frac{\xi}{|\xi|}, \nu )$,  then we have
\begin{equation}
    \Bar{D}^2_\xi G_{r,3} \geq - \mathcal{O}(\eta_3 ). 
\end{equation}
 As $G_{r,1}$ is strictly convex we have that for $ \eta_i$, $|\xi|$  small enough 
\begin{equation}
   \Bar{D}^2_\xi G_{r} \geq \tau \text{Id} 
\end{equation}
for some $\tau>0$.
\end{proof}

Now we can state the existence result for the foliation.
\begin{theorem}\label{exisfoli}
    Let $(M,g,k)$ be $ \mathcal{C}^4$-asymptotic to Schwarzschild with mass $m>0$. There exist constants $ \Tilde{\eta}_i>0$, i=1,2,3 such that if for $\eta_i< \Tilde{\eta}_i $ it holds 
\begin{equation}\label{assumptionsscalar}
    x^i \partial_i (|x|^2 \mathrm{Sc}) \leq \mathcal{O} (\eta_1 |x|^{-2}),\quad  \mathrm{Sc}(x)-\mathrm{Sc}(-x) = \mathcal{O} (\eta_2 |x|^{-4})
\end{equation}
and 
\begin{equation}
k_{ij}(x) = \mathcal{O}(\eta_3 |x|^{-2}) , \quad (\nabla k)_{ij}(x)= \mathcal{O}(\eta_3 |x|^{-3}). 
\end{equation}
 Then there exists a compact set \(K \subset M\), a number \(\lambda_0 > 0\), and on-center stable area-constrained critical
spheres \(\Sigma(\lambda)\) of the Hawking functional, satisfying (\ref{eulag}) with parameter  \(\lambda \in (0, \lambda_0)\), such that \(M \setminus K\) is foliated by the family
\(\{\Sigma(\lambda) : \lambda \in (0, \lambda_0)\}\).
\end{theorem}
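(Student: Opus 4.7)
The plan is to close the Lyapunov--Schmidt reduction set up in the preceding lemmas via the convexity of the reduced function $G_r$. Fix $\Tilde{\delta} \in (0, 1/2)$ and extract from Lemma \ref{convexity} constants $\Tilde{\delta}_0, \tau, \Tilde{\eta}_i, r_0$ so that, for $\eta_i < \Tilde{\eta}_i$ and $r > r_0$, one has $\Bar{D}^2 G_r \geq \tau\,\mathrm{Id}$ on the inner ball $\{|\xi| < \Tilde{\delta}_0\}$ and $G_r$ is strictly increasing in radial directions on an annulus $\{\Tilde{\delta}_1 < |\xi| < 1 - \Tilde{\delta}\}$ for some $\Tilde{\delta}_1 \in (0, \Tilde{\delta}_0)$. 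The radial monotonicity forces any critical point of $G_r$ in $\{|\xi| < 1 - \Tilde{\delta}\}$ to lie in $\{|\xi| \leq \Tilde{\delta}_1\}$, and the convexity yields uniqueness there; existence follows by minimizing $G_r$ over the compact inner ball. Smoothness of the resulting selection $r \mapsto \xi(r)$ comes from the implicit function theorem applied to $\Bar{D} G_r(\xi) = 0$, whose linearization is invertible by the same convexity.

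By Lemma \ref{lemma21}, the surface $\Sigma(r) := \Sigma_{\xi(r), r}$ is then an area-constrained critical sphere of the Hawking functional, with Lagrange parameter $\lambda(r) := \lambda_{\xi(r), r} = 4 r^{-3} + \mathcal{O}(r^{-4})$ by Lemma \ref{expansquafoli}. Differentiating gives $\lambda'(r) < 0$ for $r$ large, so $r \mapsto \lambda(r)$ is invertible onto an interval $(0, \lambda_0)$, yielding the parametrization $\lambda \mapsto \Sigma(\lambda)$ in the statement. Each $\Sigma(r)$ is on-center for $r$ large, as it is a graph with bounded $u$ over the sphere $S_{\xi(r), r}$, whose center $r \xi(r)$ lies deep in the asymptotic end with $|\xi(r)| < \Tilde{\delta}_1 < 1/2$. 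Stability is inherited from the fact that $\xi(r)$ is a strict local minimum of $G_r$: decomposing an area-preserving variation into its $\Lambda_1(S_{\xi, r})$-component and its $L^2$-orthogonal complement, the second variation of $\int_\Sigma H^2 - P^2 \, d\mu$ is controlled below by $\tau$ on the first factor and by the coercive Willmore-type linearization on the second, the latter being exactly what is built into Proposition \ref{17EK}.

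The foliation property will follow by analyzing the position map $y(r, \omega) = r \xi(r) + r \omega + u_{\xi(r), r}(r \xi(r) + r \omega)\,\omega$ for $\omega \in S^1(0)$. Combining the bounds $u = \mathcal{O}(1)$, $\Bar{D} u = \mathcal{O}(r^{-1})$, $u' = \mathcal{O}(r^{-2})$ from Proposition \ref{17EK} with the estimate $r \xi'(r) = \mathcal{O}(1)$, obtained by differentiating $\Bar{D} G_r(\xi(r)) = 0$ and inverting $\Bar{D}^2 G_r$, the outward-normal component of $\partial_r y$ is shown to equal $1 + \xi(r) \cdot \omega + \mathcal{O}(r^{-1})$, which is uniformly positive for $r$ large once $|\xi(r)|$ stays small. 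Hence the leaves $\{\Sigma(r)\}_{r > r_0}$ are pairwise disjoint and exhaust the complement of a compact set $K \subset M$. The main obstacle has already been dispatched in Lemma \ref{convexity}, where the new $k$-dependent term $G_{r,3}$ had to be absorbed into the smallness constants $\Tilde{\eta}_i$; granted this, the remainder is the Lyapunov--Schmidt machinery of \cite{eichko} adapted to the $k \neq 0$ setting.
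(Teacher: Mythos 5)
Your overall route coincides with the paper's: reduce to critical points of $G_r$, use the convexity and radial monotonicity of Lemma \ref{convexity} to locate a unique minimizer $\xi(r)$, invoke Lemma \ref{lemma21} and Lemma \ref{expansquafoli}, and then verify that the normal speed of the family $r\mapsto\Sigma_{\xi(r),r}$ is positive. However, there is a genuine gap at precisely the step that makes the family a \emph{foliation}. You assert that differentiating $\Bar{D}G_r(\xi(r))=0$ and inverting $\Bar{D}^2 G_r$ yields $r\,\xi'(r)=\mathcal{O}(1)$, and then claim the outward-normal component of $\partial_r$ of the position map equals $1+\delta(\xi(r),\omega)+\mathcal{O}(r^{-1})$. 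These two statements are inconsistent: the normal speed contains the term $r\,\delta(\xi'(r),\omega)$, which under your bound is only $\mathcal{O}(1)$, possibly negative and of size comparable to $1-|\xi(r)|\ge 1-\Tilde{\delta}_1$, so positivity (hence disjointness of the leaves) does not follow. What is actually needed is $|\xi'(r)|=\mathcal{O}\big((\eta_1+\eta_2+\eta_3)\,r^{-1}\big)$, and inverting $\Bar{D}^2 G_r\ge\tau\,\mathrm{Id}$ only converts this into the requirement that $(\overline{D} G_r')|_{\xi(r)}=\mathcal{O}\big((\eta_1+\eta_2+\eta_3)\,r^{-1}\big)$. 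Bounding this $r$-derivative is the real technical content of the step: it does not follow from the $\mathcal{C}^3$-in-$\xi$ bounds of Lemma \ref{bounded}, but requires the analogue of \cite[Proposition 49]{eichko} for the scalar-curvature part $G_{r,2}$ and a separate estimate, as in \eqref{dg3esti}, for the new $k$-dependent part $G_{r,3}$; the latter in turn uses the smallness of $|\xi(r)|$ coming from \eqref{DGexp}--\eqref{estixi}, an estimate you never derive (your localization only gives $|\xi(r)|<\Tilde{\delta}_1$, not $|\xi(r)|=\mathcal{O}(\eta_1+\eta_2+\eta_3)$). Without these ingredients the foliation property is not established.

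Two smaller points. First, your stability argument (splitting an area-preserving variation into its $\Lambda_1$-part, controlled by $\tau$, and its complement, controlled by a ``coercive Willmore-type linearization built into Proposition \ref{17EK}'') is not what Proposition \ref{17EK} provides as stated; the paper instead verifies stability directly by expanding the stability operator as $Lu=\,^\delta\Delta^2 u+\tfrac{2}{r^2}\,^\delta\Delta u+\mathcal{O}(r^{-5})$, which is nonnegative for large $r$. Your reduction-based route could be made to work, but as written it is an assertion, not a proof. Second, the claim $\lambda'(r)<0$ requires differentiating the error term in $\lambda=4r^{-3}+\mathcal{O}(r^{-4})$, which has not been justified (the paper is admittedly also brief here, only recording $\lambda(r)\to 0$); this affects only the reparametrization of the family by $\lambda$, not its existence.
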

\begin{proof}
  We follow the structure of the  proof in \cite{eichko}.  First, we show that for every \(r > 1\) sufficiently large, there exists a stable
area-constrained critical sphere with area radius \(r\).
To see this, we decompose
\[
G_r(\xi) = G_1(\xi) + 2 r \int_{\mathbb{R}^3 \setminus B_r(r \xi)} \mathrm{Sc} \, d{v}_\delta -r^2 \int_{S_r(r \xi)} \pi(\nu,\nu)^2 d\mu_\delta  + \mathcal{O} (r^{-1})
\]
 Note that \(G_1(0) = 0\) and \(\lim_{|\xi| \to 1} G_1(\xi) = \infty\).  Now by the first term in (\ref{assumptionsscalar}),    $\rho (| x|^2 \mathrm{Sc})(x) \leq  \mathcal{O}( \eta_1| x|^{-3})$, where $\rho= \frac{x^i}{|x|}\partial_i= \frac{\partial}{\partial r}$,  then we can integrate 
\begin{equation}\label{estilow}
    |x|^2 \mathrm{Sc} = - \int_{|x|}^\infty  \frac{\partial}{\partial s} (s^2 \mathrm{Sc}) ds \geq - \int_{|x|}^\infty  \mathcal{O}(\eta_1 s^{-3})  ds = -\mathcal{O}(|x|^{-2})
    \end{equation}
 and obtain that $ \mathrm{Sc} \geq -\mathcal{O}(\eta_1|x|^{-4}) $. With this we  we find,
\[
2 r \int_{\mathbb{R}^3 \setminus B_r(r \xi)} \mathrm{Sc} \, dv_\delta \geq -2 r \int_{\mathbb{R}^3 \setminus B_{r(1 - |\xi|)}(0)} \mathcal{O}(\eta_1|x|^{-4})  \, dv_\delta \geq -\mathcal{O}(\eta_1 |1 - |\xi||^{-1}).
\]
and similarly 
\begin{equation}
    -r^2 \int_{S_r(r \xi)} \pi(\nu,\nu)^2 d\mu_\delta  \geq -\mathcal{O}((\eta_3^2 +\eta_3^2|\xi| ) |1 - |\xi||^{-1})
\end{equation}
It follows that there is a number \(z \in (1/2, 1)\) such that
\[
G_r(0) < G_r(\xi)
\]
for every \(\xi\) with \(|\xi| = z\) and every sufficiently large \(r > 1\). Then by convexity, there is a local minimum
\(\xi(r)\) of \(G_r\) with
\[
|\xi(r)| < z.
\]
Lemma \ref{lemma21} shows that \(\Sigma(r) = \Sigma_{\xi(r), r}\) is a stable area-constrained critical surface, this comes since the stability operator of the surface is $Lu = Q_\Sigma u + \mathcal{O}(r^{-5})=  \,^\delta\Delta^2_{\xi,r} u+\frac{2}{r^2} ^\delta \Delta_{\xi,r} u + \mathcal{O}(r^{-5}) $, then as $^\delta\Delta^2_{\xi,r} +\frac{2}{r^2}\, ^\delta \Delta_{\xi,r}$ is a stable operator, $L$ is stable for $r$ large enough.

Note that since  \((\Bar{D} G_r)(\xi(r)) = 0\) and by  (\ref{DGexp}) we find
\[
0 = |\xi(r)|^{-1}  \xi(r)^i (\partial_i G_r)(\xi(r)) \geq    256 \pi |\xi| -\mathcal{O}(\eta_1) -\mathcal{O}(\eta_2) -\mathcal{O}(\eta_3) - \mathcal{O}(\eta_3|\xi|).
\]
and then we have 
\begin{equation}\label{estixi}
    \mathcal{O}(\eta_1)+ \mathcal{O}(\eta_2)+ \mathcal{O}( \eta_3) \geq 256 \pi |\xi| -C\eta_3|\xi| \geq C |\xi|
\end{equation}
provided that $256 \pi  -C\eta_3 >0 $, which is satisfied for $\eta_3$ sufficiently small.   We also have that $\lambda (r)\to 0$ as $r \to 0$. Now it remains to see that the surfaces form a foliation.

Let \(a \in \mathbb{R}^3\). Differentiating the identity \((\Bar{D} G_r)|_{\xi(r)}(a) = 0\) with respect to \(r\), we obtain
\begin{equation}\label{secodiffG}
    (\Bar{D}^2 G_r)|_{\xi(r)}(a, \xi^\prime (r)) + (\Bar{D} G^\prime_r)|_{\xi(r)} (a) = 0.
\end{equation}
 We need to estimate  $(\Bar{D}G^\prime_r)|_{\xi(r)} (a) $, then proceeding as in \cite[Proposition 49]{eichko} one finds 
 \begin{equation}
     (\Bar{D} G^\prime_{2,r})|_{\xi(r)} (a) = \mathcal{O}((\eta_1 +\eta_2) r^{-1})
 \end{equation}
 and by taking the $r$ derivative it is also direct to see
 \begin{equation}\label{dg3esti}
     (\Bar{D} G^\prime_{3,r})|_{\xi(r)} (a) = \mathcal{O}((\eta_3+\eta_3 |\xi| ) r^{-1})
 \end{equation}
 and therefore using the estimate on $|\xi|$ we obtain 
\begin{equation}
     (\Bar{D} G^\prime_r)|_{\xi(r)} (a) = \mathcal{O}((\eta_1 +\eta_2 +\eta_3) r^{-1}).
\end{equation}
Choosing $a= \xi'(r)  $, using (\ref{secodiffG}) and the convexity of $G_r$  we have for some $\tau>0$ 
$$\tau |\xi'(r)|^2\leq  (\Bar{D}^2 G_r)|_{\xi(r)}(\xi^\prime (r), \xi^\prime (r))  \leq - (\Bar{D} G^\prime_r)|_{\xi(r)} (\xi^\prime) = \mathcal{O}((\eta_1 +\eta_2 +\eta_3)|\xi'(r)| r^{-1}).$$

Then $|\xi'(r)|= \mathcal{O}((\eta_1 +\eta_2 +\eta_3)r^{-1})  $. Note that the parameterization of our surfaces is given by $ \Phi(y, r)= r y +r \xi +u_{\xi, r} (y) y$ for $ y\in S^2(0)$. then
$$ \frac{\partial \Phi}{\partial r} = y + \xi(r) +r \xi'(r )+ \Bar{D}_{\xi'(r)} u_{\xi, r} y+ u_{\xi, r} '(y) y  $$
then 
\begin{equation}
\begin{split}
    \delta( \frac{\partial \Phi}{\partial r}, y ) &=1 + \delta( \xi(r),y ) +r \delta( \xi'(r ),y) + \Bar{D}_{\xi'(r)} u_{\xi, r} + u_{\xi, r} '(y) \\
    &\geq 1 -|\xi(r)| + \mathcal{O}(\eta_1 +\eta_2 +\eta_3)  +\mathcal{O}( r^{-1})
    \end{split}
\end{equation}
Then for small $\eta_i$ and large $r$, we have a foliation.
\end{proof}
\begin{remark}
Note that compared to \cite{eichko} we relaxed  the assumptions on $\mathrm{Sc}$ by changing the little $o$ bounds by big $\mathcal{O}$ bound with a small constant $\eta_i$. By relaxing the assumption this way we obtain that   $\lim_{r \to \infty} \xi(r)$ is not necessarily zero, meaning that the surfaces are not centered and their center may slightly move while preserving the foliation property.  Also, the main difference with the assumptions for the foliation constructed in \cite{Friedrich2020} is that there the technique of Lamm, Metzger and Schulze \cite{willflat} was used. This requires stronger conditions on the decay of the metric and on the scalar curvature for which a decay of the form $ \mathrm{Sc}=\mathcal{O}(\eta |x|^{-5}) $ for small $\eta$ was required.
\end{remark}

We also obtain a uniqueness result with a proof analogous to the one of \cite[Theorem 8]{eichko} which we include for completeness.
\begin{theorem}\label{uniqueness}
    Under the same assumptions of Theorem \ref{exisfoli}. There exists a small constant \(\epsilon_0 > 0\) and a compact set
\(K \subset M\) which depend only on \((M, g, k)\) such that the following holds. For every \(\Tilde{\delta} > 0\), there exists a
large constant \(r_0 > 1\) such that every area-constrained critical sphere \(\Sigma \subset M \setminus K\) of the Hawking functional with
\[
|\Sigma| > 4 \pi r_0^2, \quad \quad \Tilde{\delta} r(\Sigma) < R(\Sigma), \quad \Tilde{\delta} R(\Sigma) < r(\Sigma)  \quad \text{and}\quad \int_{\Sigma} |\mathring{B}|^2 \, d\mu < \epsilon_0 
\]
 where $R(\Sigma):=\sup_{x \in \Sigma } |x|$ and $r(\Sigma)$ is the area radius, belongs to the foliation from Theorem \ref{exisfoli}.
\end{theorem}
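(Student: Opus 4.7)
The plan is to follow the strategy of Theorem 8 in \cite{eichko}, leveraging the three ingredients already established above: the Lyapunov--Schmidt reduction of Proposition \ref{17EK}, the reduction to critical points of $G_r$ in Lemma \ref{lemma21}, and the convexity and radial monotonicity of $G_r$ in Lemma \ref{convexity}. The argument will proceed in three steps: first a graphical representation of $\Sigma$ over a coordinate sphere; second, a matching of this graph to the family produced by the Lyapunov--Schmidt reduction; and third, an identification of the center via the scalar function $G_r$.

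For the graphical representation, given $\Sigma$ as in the statement with area radius $r = r(\Sigma)$, I would rescale by $r^{-1}$ to obtain a surface of area $4\pi$ in the metric $g_{0,r} = r^{-2}\Theta^*_{0,r} g$, which is close to the Euclidean metric on a fixed annulus determined by $\Tilde{\delta}$. The bound $\int_\Sigma |\mathring{B}|^2 \, d\mu < \epsilon_0$ is scale-invariant, so the rescaled surface also has small $L^2$-norm of $\mathring{B}$. Together with the non-degeneracy $\Tilde{\delta} r < R(\Sigma)$ and $\Tilde{\delta} R(\Sigma) < r$, De Lellis--Müller and Metzger-type estimates (as used in \cite{eichko}) produce a center $\xi \in \mathbb{R}^3$ with $|\xi| < 1 - \Tilde{\delta}/2$ such that, after unscaling, $\Sigma = \Sigma_{\xi, r}(u)$ with $u \perp \Lambda_1(S_{\xi, r})$. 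Elliptic regularity applied to the Euler--Lagrange equation $W_1 + W_2 + \lambda H = 0$ then upgrades the control on $u$ and $\lambda$ to exactly the $\mathcal{C}^{4,\alpha}$-type bounds required to invoke the uniqueness clause of Proposition \ref{17EK}.

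Since $\Sigma$ is area-constrained critical, $W_1 + W_2 + \lambda H \equiv 0$ on $\Sigma$, in particular lies in $\Lambda_1(S_{\xi, r})$, and by construction $|\Sigma_{\xi, r}(u)| = 4\pi r^2$. The uniqueness clause of Proposition \ref{17EK} then forces $u = u_{\xi, r}$ and $\lambda = \lambda_{\xi, r}$, so $\Sigma$ agrees with one of the spheres $\Sigma_{\xi, r}$ produced by the reduction. By Lemma \ref{lemma21}, full criticality of $\Sigma_{\xi, r}$ (not merely criticality modulo $\Lambda_1$) is equivalent to $\xi$ being a critical point of $G_r$. By Lemma \ref{convexity}, for $r$ sufficiently large $G_r$ is strictly convex near $0$ and strictly radially increasing on the complementary annulus up to $\{|\xi| < 1 - \Tilde{\delta}\}$, so its only critical point in $\{|\xi| < 1 - \Tilde{\delta}\}$ is $\xi(r)$. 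Hence $\Sigma = \Sigma_{\xi(r), r}$, which is a leaf of the foliation from Theorem \ref{exisfoli}.

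The main obstacle is the graphical representation step. Converting the integral smallness of $|\mathring{B}|^2$ together with the two-sided radius comparison into a quantitative $\mathcal{C}^{4,\alpha}$ graph over an exact coordinate sphere with $u \perp \Lambda_1$ requires De Lellis--Müller-type estimates combined with an elliptic bootstrap applied to the fourth-order Euler--Lagrange equation. In our setting the additional lower-order terms collected in $W_2$ are, by the decay assumption \eqref{assumpi}, of order $\mathcal{O}(r^{-5})$ after rescaling and are therefore dominated by the leading fourth-order operator $^\delta\Delta^2 + 2 r^{-2}\,{}^\delta\Delta$ already present in the Willmore case; the bootstrap thus carries over with only bookkeeping modifications relative to \cite{eichko}, and no new analytic ideas are required.
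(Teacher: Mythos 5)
Your endgame coincides with the paper's: once $\Sigma$ is known to be a graph over some coordinate sphere $S_{\xi,r}$ with the smallness required by Proposition \ref{17EK}, the uniqueness clause of that proposition gives $\Sigma=\Sigma_{\xi,r}$, Lemma \ref{lemma21} then makes $\xi$ a critical point of $G_r$, and Lemma \ref{convexity} (strict convexity near the origin together with radial monotonicity on the outer annulus) forces $\xi=\xi(r)$, so $\Sigma$ is a leaf of the foliation. Where you diverge from the paper is the first step. The paper argues by contradiction and compactness: for a sequence of putative counterexamples it invokes the scale-invariant curvature estimates of \cite[Lemma 4.2]{Thomas} (applicable because $k$ decays), which give $\|B\|_{L^\infty(\Sigma_j)}=\mathcal{O}(r_j^{-1})$ with higher-order analogues \emph{and} the Lagrange-multiplier bound $\lambda(\Sigma_j)=\mathcal{O}(r_j^{-3})$; it then extracts a smooth blow-down limit and identifies it as a round sphere via the gap theorem of Kuwert--Sch\"atzle \cite[Theorem 2.7]{kuwert2001willmore}, and only afterwards applies Proposition \ref{17EK}. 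You propose instead a direct quantitative route via De Lellis--M\"uller plus an elliptic bootstrap.

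The gap sits exactly in that step, which you flag as ``the main obstacle'' but then declare to be bookkeeping. De Lellis--M\"uller-type results convert $\int_\Sigma|\mathring{B}|^2<\epsilon_0$ only into $W^{2,2}$-closeness of the rescaled surface to a round sphere; an elliptic bootstrap of the fourth-order Euler--Lagrange equation cannot be launched from that alone, since the equation's coefficients involve $|\mathring{B}|^2$, $\mathrm{Ric}(\nu,\nu)$ and the $W_2$-terms, and one needs pointwise, scale-invariant bounds on $B$ and its derivatives before the operator can be treated as a perturbation of ${}^\delta\Delta^2+2r^{-2}\,{}^\delta\Delta$. Moreover, the uniqueness clause of Proposition \ref{17EK} also demands $r^3|\lambda|<\epsilon r$, and ``elliptic regularity'' does not bound the Lagrange multiplier; that estimate comes from testing the equation (or, as in the paper, comes packaged with the curvature estimates of \cite[Lemma 4.2]{Thomas}). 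So your outline is workable, but only if you explicitly invoke such an $\epsilon$-regularity/curvature estimate for area-constrained critical surfaces of the Hawking functional, or fall back on the compactness argument the paper actually uses; as written, the hardest analytic ingredient is asserted rather than established.
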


\begin{proof}
The argument follows closely the proof of \cite[Theorem 8]{eichko}, with the curvature control replaced by the scale-invariant estimates proved in 
Eichmair--Koerber--Metzger--Schulze \cite{eichmair2024huisken}.

 Suppose, for a contradiction, that there is a sequence of area-constrained critical spheres \(\{\Sigma_j\}_{j=0}^\infty\) with
\[
\liminf_{j \to \infty} |\Sigma_j| = \infty, \quad \limsup_{j \to \infty} \int_{\Sigma_j} |\mathring{B}|^2 \, d\mu < \epsilon_0, \quad 0 < \liminf_{j \to \infty} \frac{R(\Sigma_j)}{r(\Sigma_j)} \leq \limsup_{j \to \infty} \frac{R(\Sigma_j)}{r(\Sigma_j)} < \infty,
\]
and \(\Sigma_j \neq \Sigma(r_j)\) where \(r_j = r(\Sigma_j)\).

Since the tensor $k$ decays as $\mathcal{O}(|x|^{-2})$ and its derivatives as $\mathcal{O}(|x|^{-3})$, 
the Hawking operator differs from the Willmore operator by lower--order perturbations.
Hence the scale–invariant curvature bounds for large area–constrained Willmore spheres 
from \cite[Lemmas 6 and 8, Proposition 10, Corollary 14]{eichmair2024huisken} 
apply to the present situation with only lower–order error terms.   In particular, for each sufficiently large $j$, 
\begin{equation}\label{eq:EKMS-curv}
 |x| \big|B - r_j^{-1} g_{|\Sigma_j}\big|
  + |x|^2\,|\nabla B|
  \;=\;
  \mathcal{O}(r_j^{-1/2}+ \frac{1}{\min_{x\in\Sigma_j} |x|}),
\end{equation}
By our assumptions $|x|\simeq r_j$ on $\Sigma_j$.
Substituting these comparabilities into \eqref{eq:EKMS-curv} yields
\begin{equation}\label{eq:C2bounds}
  \|B\|_{L^{\infty}(\Sigma_j)} = \mathcal{O}(r_j^{-1}),
  \qquad
  \|\nabla B\|_{L^{\infty}(\Sigma_j)} = \mathcal{O}(r_j^{-2}),
\end{equation}
We can also get an estimate on the Lagrange parameter $\lambda(\Sigma_j)$. Integrating the Euler–Lagrange equation \eqref{eulag}  over $\Sigma_j$
and using $\int_{\Sigma_j}\Delta_{\Sigma_j} H\,d\mu=0$ and $\int_{\Sigma_j}\mathrm{div}_{\Sigma_j} X\,d\mu=0$, we obtain
\[
\lambda(\Sigma_j)\int_{\Sigma_j} H\,d\mu
= -\!\int_{\Sigma_j}\!\Big(H|\mathring{B}|^2 + H\,\mathrm{Ric}(\nu,\nu)
+ P(\nabla_\nu \operatorname{tr}k - \nabla_\nu k(\nu,\nu))
+ \tfrac12 H P^2 - 2\,k(\nabla^\Sigma P,\nu)\Big)\,d\mu.
\]
Using \eqref{eq:C2bounds}, $H=2r_j^{-1}+\mathcal{O}(r_j^{-2})$, $|P|=\mathcal{O}(r_j^{-2})$, $|\nabla P|=\mathcal{O}(r_j^{-3})$,
$\mathrm{Ric}=\mathcal{O}(r_j^{-3})$, and $|\Sigma_j|=4\pi r_j^2+\mathcal{O}(1)$, we can estimate the right hand side integral by $\mathcal{O}(r_j^{-3})$.
Since $\int_{\Sigma_j} H\,d\mu = 8\pi r_j + \mathcal{O}(1)\simeq c\,r_j$, it follows that
\begin{equation}\label{eq:lambda}
  \lambda(\Sigma_j) = \mathcal{O}(r_j^{-3}).
\end{equation}

Define the rescaled surfaces $\tilde{\Sigma}_j := r_j^{-1}\Sigma_j 
\subset (\mathbb{R}^{3}, \delta)$.
By \eqref{eq:C2bounds}, the family $\{\tilde{\Sigma}_j\}$ has uniformly 
bounded curvature and area, and hence (after passing to a subsequence) converges 
smoothly to a closed Willmore surface 
$\tilde{\Sigma} \subset \mathbb{R}^{3}$ satisfying
\[
  \int_{\tilde{\Sigma}} |\mathring{B}_{\delta}|^{2}\, d\mu_{\delta} 
  < \varepsilon_0, 
  \qquad |\tilde{\Sigma}| = 4\pi, 
  \qquad \frac{1}{4\pi} \int_{\tilde{\Sigma}} y\, d\mu_{\delta} = \xi_0.
\]
By  \cite[Theorem 2.7]{kuwert2001willmore}  (or also by the quantitative rigidity results of De Lellis–Müller \cite[Theorem 1]{de2005optimal} and \cite[Theorem 2]{de2006ac}), it follows that $\tilde{\Sigma}=S_{1}(\xi_0)$ is a round sphere.

For $j$ large, each $\Sigma_j$ is therefore a small normal graph over a 
coordinate sphere:
\[
  \Sigma_j = \Sigma_{\xi_j, r_j}(u_j)
  \quad\text{with}\quad 
  \|u_j\|_{C^{4}(S^{2})} \le \varepsilon,
\]
for some small $\varepsilon>0$ depending only on $\varepsilon_0$. Then for $j$ large,  $W_1(\Sigma_j) + W_2(\Sigma_j)+ \lambda H(\Sigma_j) =0\in \Lambda_1(S_{\xi,r_{j}})$ for certain $\xi$ and $|\Sigma_j|= 4\pi r_j^2$, then by the uniqueness statement of the Lyapunov-Schmidt reduction of Proposition \ref{17EK}, $\Sigma_j $ is one of the surfaces obtained there in the sense that  \(\Sigma_j = \Sigma_{\xi_j, r_j}\) for some \(\xi_j\), but as $\Sigma_{\xi_j, r_j} $ is an area-constrained critical surface, then $\xi_j$ is also the only critical point of \(G_{r_j}\). This implies that $ \xi_j =\xi(r_j)$ getting a contradiction.
\end{proof}

\begin{remark}
    Note that compared to the case in \cite{eichko} $\xi_0 $ is not equal to zero, but this does not compromise the uniqueness result since it only relies on the surfaces being a perturbation of coordinate spheres so that one can use the uniqueness of the Lyapunov-Schmidt reduction.
\end{remark}
We can relax the conditions on $\mathrm{Sc}$ and $k$, obtaining a family of surfaces at infinity, similar to \cite[Theorem 6]{willcen}, but not a foliation. This means that the surfaces may intersect with each other. 
\begin{theorem}\label{family}
     Let $(M,g,k)$ be $ \mathcal{C}^4$-asymptotic to Schwarzschild with mass $m>0$. There exist constants $ \Tilde{\eta}_i>0$, i=1,2 such that if for $\eta_i< \Tilde{\eta}_i $ it holds 
     \begin{equation}
         \mathrm{Sc} \geq -\mathcal{O} (\eta_1 |x|^{-4}). 
     \end{equation}
  Then there exists a compact set \(K \subset M\), a number \(\lambda_0 > 0\), and a family
\(\{\Sigma(\lambda) : \lambda \in (0, \lambda_0)\}\) of on-center stable area-constrained critical
spheres \(\Sigma(\lambda)\) of the Hawking functional,  satisfying (\ref{eulag}) with parameter \(\lambda\) such that as $\lambda \to 0$, $|\Sigma(\lambda)|\to \infty$ and   $ R(\lambda) \to \infty$, where $R(\lambda):=\sup_{x \in \Sigma(\lambda) } |x|$. 

If additionally it holds,
\begin{equation}\label{condiuni1}
    x^i \partial_i (|x|^2 \mathrm{Sc}) \leq \mathcal{O} (\eta_1 |x|^{-2}),
\end{equation}
\begin{equation}\label{condiuni2}
    \pi (\rho, \rho) = \mathcal{O} (\eta_2  | x|^{-2}), \quad k(\cdot, \rho) = \mathcal{O} ( \eta_2| x|^{-2})\quad \text{and} \quad (\nabla_{\rho} \pi )(\rho, \rho)= \mathcal{O} (\eta_2| x|^{-3}),
\end{equation}
where $\pi = k - \tr k\, g$ and $\rho= \frac{x^i}{|x|}\partial_i$. Then the family of surfaces is unique in the sense that there exist a small constant \(\epsilon_0 > 0\) and a compact set
\(K \subset M\) which depend only on \((M, g, k)\) such that  for every \(\Tilde{\delta} > 0\), there exists a large constant \(r_0 > 1\) such that every area-constrained critical sphere \(\Sigma \subset M \setminus K\) of the Hawking functional with
\[
|\Sigma| > 4 \pi r_0^2, \quad \quad \Tilde{\delta} r(\Sigma) < R(\Sigma), \quad \Tilde{\delta} R(\Sigma) < r(\Sigma), \quad \text{and}\quad \int_{\Sigma} |\mathring{B}|^2 \, d\mu < \epsilon_0 
\]
 where $r(\Sigma)$ is the area radius, belongs to the family of surfaces.\end{theorem}
\begin{proof}We use the decomposition 
\begin{equation}\label{Gr-split-proof}
G_r(\xi)=G_1(\xi)+G_{r,2}(\xi)+G_{r,3}(\xi)+O(r^{-1}),
\end{equation}
with
\[
G_{r,2}(\xi)=2r\int_{|x|>r|\xi|}\mathrm{Sc}(x)\,dv_\delta,\qquad
G_{r,3}(\xi)=-r^2\int_{S_r(r\xi)}\pi(\nu,\nu)^2\,d\mu_\delta.
\]
By the decay of $k$ it holds $ G_{r,3}(\xi)= \mathcal{O}(1)  $ and by $ \mathrm{Sc} \geq -\mathcal{O}(\eta_1|x|^{-4}) $
\[
2 r \int_{\mathbb{R}^3 \setminus B_r(r \xi)} \mathrm{Sc} \, dv_\delta \geq -2 r \int_{\mathbb{R}^3 \setminus B_{r(1 - |\xi|)}(0)} \mathcal{O}(\eta_1|x|^{-4})  \, dv_\delta \geq -\mathcal{O}(\eta_1 |1 - |\xi||^{-1}).
\]
We know that $G_r(\xi)$ is continuous  and that \(G_1\) is strictly convex and blows up as \(|\xi|\nearrow1\).  The bounds for $ G_{r,2}$ and $ G_{r,3}$  imply that
for large \(r\) the continuous function \(G_r\) attains a global minimum on the compact set
\(\{|\xi|\le 1-\Tilde{\delta}\}\) and the minimizer cannot occur at the boundary \( |\xi|=1-\Tilde{\delta}\)
(for \(|\xi|\) near 1 the blow-up of \(G_1\) dominates the small perturbations). Hence for every
large \(r\) there exists at least one interior minimizer \(\xi(r)\) and the  surface \(\Sigma_{\xi(r),r}\) is an area-constrained critical sphere with area \(4\pi r^2\).  Note that this minimum is not necessarily unique, to guarantee uniqueness, we need convexity of $G_r(\xi)$.

Now assuming (\ref{condiuni1}, \ref{condiuni2}), we will show that $\Bar{D}^2 G_r$ is positive definite. By \cite[Lemma 20, Corollary 21]{willcen}  we have that  $ x^i \partial_i (|x|^2 \mathrm{Sc}) \leq \mathcal{O} (\eta_1 |x|^{-2})$ implies that $\Bar{D}^2G_{r,2}$ is positive semidefinite up to an error $ \mathcal{O}(\eta_2 )$, i.e.  $\Bar{D}^2G_{r,2}\geq -c\eta_2  \text{Id}$.

For  $ \Bar{D}^2G_{r,3}$,  like we did for (\ref{taylorexp}) we use again a Taylor expansion at $\xi=0$ obtaining
\begin{equation}
    \begin{split}
    \Bar{D}^2G_{r,3}(a,a) =&   -  r^2\int_{S_{r}(r\xi)}    2\mathcal{D}W_2\left(r \delta( a, \nu ), r \delta( a, \nu ) \right) d\mu_\delta  \\
    =&   -  r^2\int_{S_{r}(0)}   2 \mathcal{D}W_2\left(r \delta( a, \nu ), r \delta( a, \nu ) \right) d\mu_\delta  + \mathcal{O}(|\xi| )\\
    =&\mathcal{O}(\eta_2 ) + \mathcal{O}(|\xi| ),
    \end{split}
\end{equation}
where we used that each term of the second variation depends on the radial directions,  $\pi(\rho,\rho)$ $(\nabla_\rho \pi)(\rho,\rho)$ or $k(\rho,\cdot) $.  Then we have that for $ \eta_i$ and $ |\xi|$ small enough $\Bar{D}^2 G_r$ is positive definite.   Repeating the argument used in Theorem \ref{uniqueness} we obtain the  uniqueness.
\end{proof}
\begin{remark}
$i)$ Recall that the curvature condition 
$\mathrm{Sc} \ge -\mathcal{O}(\eta_1 |x|^{-4})$ is implied by 
$x^i \partial_i (|x|^2 \mathrm{Sc}) \le \mathcal{O}(\eta_1 |x|^{-2}),$ as shown in~\eqref{estilow}. 

$ii)$ Note that for this family of surfaces not only the outer radius diverges but also the inner radius  $\inf_{x \in \Sigma(\lambda) } |x|$ does.

$iii)$ The key reason why, under these weaker assumptions, we cannot guarantee that the family of surfaces constructed above forms a foliation  is the lack of control on the size of the translation parameter~$|\xi|$.

Indeed, revisiting the proof of Lemma~\ref{convexity}, the crucial estimate for the radial derivative 
\(|\xi|^{-1}\xi^i(\partial_i G_{r,3})(\xi)\)
involves, via the expansion~\eqref{estitaylor}, the terms depending on the radial components of~$k$ and~$\pi$, namely
\(\pi(\rho,\rho)\), \((\nabla_\rho \pi)(\rho,\rho)\), and \(k(\rho,\cdot)\).
From this, one obtains
\[
|\xi|^{-1} \xi^i (\partial_i G_{r,3})(\xi)
\ge -\mathcal{O}(\eta_2^2) - \mathcal{O}(\eta_2 |\xi|) - \mathcal{O}(|\xi|^2),
\]
and similarly,
\[
|\xi|^{-1} \xi^i (\partial_i G_{r,2})(\xi)
\ge -\mathcal{O}(\eta_1) - \mathcal{O}(\eta_1 |\xi|) - \mathcal{O}(|\xi|^2).
\]

Combining these inequalities gives a weaker estimate for the total derivative:
\begin{equation}\label{estidgcolo}
    \bar{D}_\xi G_r
    \ge 256\pi |\xi|
    - \mathcal{O}(\eta_1)
    - \mathcal{O}(\eta_2)
    - \mathcal{O}(|\xi|^2).
\end{equation}
Compared with~\eqref{DGexp}, this bound yields less precise control on the growth of~$G_r$ with respect to~$\xi$.
In particular, we can no longer obtain an estimate for~$|\xi|$ as in~\eqref{estixi}. 
This, in turn, results in a weaker bound on 
\((\Bar{D} G'_{3,r})|_{\xi(r)}(a)\) (see~\eqref{dg3esti}), 
which also implies that we do not have a good estimate on~$|\xi'|$, 
a quantity that was crucial in proving that the family of surfaces forms a foliation.

\end{remark}

\subsection{Large-sphere limit}

A natural question is whether the Hawking energy, evaluated along the foliation, approaches the ADM energy in the large-sphere limit. Before we proceed, note that in an asymptotically Schwarzschild manifold of mass $m$, the ADM energy is exactly the parameter 
$m$. This identification might seem confusing at first, so let’s clarify why it holds:

\begin{remark}\label{remarkm}
In a time-symmetric slice of Schwarzschild spacetime, since the   ADM momentum vanishes, the ADM energy and the ADM mass coincide; both equal 
 the parameter $m$ of the metric.  In our more general scenario, we still have $E_{ADM}=m$ and the ADM momentum need not vanish; as a result, the parameter $m$ is not the ADM mass, which, in our case, is $\sqrt{m^2-|p_{ADM}|^2} $.
\end{remark}
Before continuing, we will need the following result from \cite[Proposition A1]{Nerz2}:
\begin{proposition}\label{nertzlemma}
Let $(M, g, x)$ be a $\mathcal{C}^{2, \frac{1}{2}+\epsilon}$-asymptotically flat Riemannian manifold. The ADM energy of $(M,g)$ is defined as
\[
E_{ADM} := \lim_{r \to \infty}  \frac{r}{8\pi} \int_{S_r^2} \left(  \frac{\mathrm{Sc}}{2} - \mathrm{Ric}(\nu,\nu) \right) d\mu ,
\]
provided this limit exists, where $S_r^2 = x^{-1}(S_r^2(0))$, $\nu$ is the outward unit normal, and $d\mu$ is the induced surface measure. Assume $\Sigma \to M \setminus K$ is a closed hypersurface enclosing $K$, i.e., $(M \setminus K)\setminus\Sigma$ consists of two connected subsets of $M$ and $K$ is contained in the relatively compact one. For any constant $c \geq 0$ there is a constant $C = C(\epsilon,  c)$ such that the existence of a vector $z \in \mathbb{R}^3$ with
\[
|z| \leq c r_{min}, \qquad 
\max_{\Sigma} \left| \nu - \frac{x - z}{\, r_{min}} \right| \leq c r_{min}^{-\frac{1+\epsilon}{2}}, \qquad 
|\Sigma| \leq c r_{min}^2,
\]
implies
\[
\left|\, E_{ADM} + \frac{r_{min}}{8\pi} \int_{\Sigma} \bigl(\mathrm{Ric}(\nu,\nu) - \frac{\mathrm{Sc}}{2}\bigr) \, d\mu \right| \leq \frac{C}{r_{min}^\epsilon},
\]
where $r_{min}:= \min_{\Sigma}|x|$, and $\nu$, $\mu$ denote the outer unit normal and the surface measure of $\Sigma \to M$, respectively.
\end{proposition}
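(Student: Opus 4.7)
The plan is to reduce the surface integral on $\Sigma$ to an integral over a large coordinate sphere $S_T^2$, where the defining limit of $E_{ADM}$ applies, via a divergence-theorem argument based on the Einstein tensor. The integrand $\mathrm{Ric}(\nu,\nu) - \mathrm{Sc}/2$ is precisely $-G(\nu,\nu)$ for $G := \mathrm{Ric} - \tfrac{1}{2}\mathrm{Sc}\,g$, and the contracted second Bianchi identity gives $\diver G = 0$. This divergence-freeness will let one transport the surface of integration outward, with only controllable bulk and orientation errors.

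Concretely, I would fix $T > \max_\Sigma |x|$ so that $\Sigma$ is strictly enclosed by $S_T^2 := x^{-1}(S_T^2(0))$, and denote the annular region between them by $\Omega_T$. Choose a smooth vector field $X$ on $\Omega_T$ that agrees with $(x-z)/|x-z|$ near $\Sigma$ (so the hypothesis controlling $|\nu_\Sigma - (x-z)/R|$ bounds $|X - \nu_\Sigma|$) and with $x/|x|$ near $S_T^2$. Since $\diver G = 0$, one has $\diver(G(X,\cdot)) = \langle G, \nabla^{\mathrm{sym}} X\rangle$, so Stokes' theorem yields
\[
\int_{S_T^2} G(X,\nu_T)\,d\mu_T - \int_\Sigma G(X,\nu_\Sigma)\,d\mu = \int_{\Omega_T} \langle G,\nabla^{\mathrm{sym}} X\rangle\,dV.
\]

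I would then estimate the three terms. On $\Sigma$, the swap $G(X,\nu_\Sigma) \mapsto G(\nu_\Sigma,\nu_\Sigma)$ has error bounded by $\|G\|_\infty \cdot \|\nu_\Sigma - X\|_{L^\infty(\Sigma)} \cdot |\Sigma|$, which together with the Ricci decay and the bound $|\Sigma|\le cR^2$ yields an error of order $R^{-\epsilon}$ after multiplication by the prefactor $R/(8\pi)$. On $S_T^2$ the analogous swap is tautological by construction. The bulk term must be handled carefully by choosing $X$ so that $\nabla^{\mathrm{sym}} X$ is asymptotic to a Euclidean Killing or conformal Killing structure, since a naive radial choice pairs with the leading Schwarzschildean part of $G$ non-trivially and the resulting integral would diverge as $T \to \infty$; with such a choice, the bulk integrand decays fast enough that after the prefactor its contribution is again $\mathcal{O}(R^{-\epsilon})$. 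Finally, sending $T \to \infty$ and invoking the hypothesis $\tfrac{T}{8\pi}\int_{S_T^2}(\mathrm{Sc}/2 - \mathrm{Ric}(\nu_T,\nu_T))\,d\mu_T \to E_{ADM}$ assembles everything into the stated bound.

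The main obstacle is the bookkeeping of asymptotic decay, particularly the design of the interpolating field $X$ so that the bulk contribution actually converges as $T \to \infty$ and each of the three error pieces is genuinely $\mathcal{O}(R^{-\epsilon})$ after the prefactor $R/(8\pi)$. A slicker alternative that avoids most of this bookkeeping uses the Gauss equation $\mathrm{Sc}/2 - \mathrm{Ric}(\nu,\nu) = K_\Sigma + \tfrac{1}{2}(|B|^2 - H^2)$ together with Gauss–Bonnet, rewriting the surface integral on any topological sphere as $4\pi - \tfrac{1}{2}\int_\Sigma(H^2 - |B|^2)\,d\mu$; this recasts the proposition as an asymptotic expansion of a Willmore-type integral, which is in any case the form the paper uses later when comparing the Hawking energy to the ADM energy in the large-sphere limit.
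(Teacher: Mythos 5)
Be aware first that the paper does not prove this proposition: it is quoted verbatim from Nerz \cite[Proposition A.1]{Nerz2}, so your attempt can only be measured against the known arguments (Nerz, and the closely related \cite{miao2016evaluation, herzlich2016computing}), which do indeed revolve around the idea you start from — pair the divergence-free Einstein tensor $G=\mathrm{Ric}-\tfrac12\mathrm{Sc}\,g$ with a vector field and compare fluxes. The structural instinct is right, but as written there is a genuine gap. You specify $X$ to be a \emph{unit} radial field near both boundaries; with that choice Stokes' theorem compares the \emph{unweighted} fluxes $\int_\Sigma G(\nu,\nu)\,d\mu$ and $\int_{S_T^2}G(\nu,\nu)\,d\mu$. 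Since $\tfrac{T}{8\pi}\int_{S_T^2}(\tfrac{\mathrm{Sc}}{2}-\mathrm{Ric}(\nu,\nu))\,d\mu\to E_{ADM}$, the unweighted flux at infinity is $\mathcal{O}(T^{-1})$ and tends to $0$, so your identity retains no information about $E_{ADM}$, and the prefactors $R/8\pi$, $T/8\pi$ cannot be reinstated afterwards; your closing step (``send $T\to\infty$ and invoke the definition of $E_{ADM}$'') does not match the identity you derived, which contains no factor of $T$. To make the boundary terms produce the weighted quantities you must take $X$ asymptotic to the position field $x-z$ (a Euclidean conformal Killing field), so that $G(X,\nu)\approx|x-z|\,G(\nu,\nu)\approx R\,G(\nu,\nu)$ on $\Sigma$; the near-roundness hypotheses (read, as you do, with a decaying bound on $|\nu-(x-z)/R|$ — the positive exponent printed in the statement is surely a typo) are what justify that swap, and your arithmetic for that error is fine.

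But with the position-field choice the bulk term is not something you can tune away by a clever design of $X$: since $\nabla^{\mathrm{sym}}X=g+\mathcal{O}(|x|^{-1/2-\epsilon})$, one gets $\langle G,\nabla^{\mathrm{sym}}X\rangle=-\tfrac12\mathrm{Sc}+\mathcal{O}(|x|^{-3-2\epsilon})$, so the comparison unavoidably produces $-\tfrac12\int_{\Omega_T}\mathrm{Sc}\,dV$. Under the bare metric decay $g-\delta=\mathcal{O}(|x|^{-1/2-\epsilon})$ this integral need not converge at all, let alone at rate $R^{-\epsilon}$; its control is exactly where the scalar-curvature decay built into Nerz's definition of $\mathcal{C}^{2}_{1/2+\epsilon}$-asymptotic flatness enters, and it is where the real work of the proof lies (Nerz compares $R\,G(\nu,\nu)$ with the ADM charge integrand up to tangential divergences and quadratic error terms of size $|x|^{-1-2\epsilon}$, which after the prefactor $R$ and the area bound $|\Sigma|\le cR^2$ give the $R^{-2\epsilon}$ rate). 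Your proposal never addresses this term, and your diagnosis is misplaced: for the unit radial field the bulk integral actually converges (the problem there is the lost weights), while for the conformal Killing field the problematic piece is $\int\mathrm{Sc}$, which vanishes for the Schwarzschild part and has nothing to do with it. Finally, the ``slicker alternative'' via the Gauss equation and Gauss--Bonnet is circular in this context: it converts the claim into the asymptotic expansion of $\int_\Sigma H^2-|B|^2\,d\mu$ on the nearly round surfaces, i.e.\ essentially the statement that a Hawking-type energy converges to $E_{ADM}$ — which is precisely what the paper \emph{deduces from} Proposition \ref{nertzlemma} in Theorem \ref{largesphere}; proving that expansion on a general $\Sigma$ requires the same comparison with the ADM flux that you are trying to bypass.
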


The Ricci–flux identity for the ADM mass used here goes back to Ashtekar–Hansen \cite{ashtekar1978unified} and Chruściel \cite{chrusciel1986remark}; see also the modern formulations by Miao–Tam \cite{miao2016evaluation} and Herzlich \cite{herzlich2016computing}.

 We now prove:
\begin{theorem}\label{largesphere}
Let $\{\Sigma_r\}$ denote either the leaves of the foliation from Theorem~\ref{exisfoli} or the on-center family from Theorem~\ref{family}, indexed by their area radius $r$ (so $|\Sigma_r|=4\pi r^{2}$). Then $ \mathcal{E}(\Sigma_r)\to m $ as $r \to \infty$.
\end{theorem}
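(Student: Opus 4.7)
The plan is to expand $\mathcal{E}(\Sigma_r)$ in a form where the large–sphere ADM expression of Proposition~\ref{nertzlemma} appears as the leading term, with the remaining contributions decaying as $r\to\infty$. The key algebraic move is to combine the Gauss equation with Gauss–Bonnet on the topological sphere $\Sigma_r$, replacing $\int H^2 d\mu$ by a Ricci-scalar integral plus a traceless-second-fundamental-form defect.

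First I would record, from the Gauss equation $\mathrm{Sc}_\Sigma=\mathrm{Sc}-2\,\mathrm{Ric}(\nu,\nu)+H^2-|B|^2$ together with $|B|^2=|\mathring B|^2+\tfrac12 H^2$, and $\int_{\Sigma_r}\mathrm{Sc}_\Sigma\,d\mu=8\pi$, the identity
\[
\int_{\Sigma_r}(H^2-P^2)\,d\mu
=16\pi+4\int_{\Sigma_r}\!\bigl(\mathrm{Ric}(\nu,\nu)-\tfrac{\mathrm{Sc}}{2}\bigr)\,d\mu
+2\int_{\Sigma_r}|\mathring B|^2\,d\mu-\int_{\Sigma_r}P^2\,d\mu.
\]
Substituting into \eqref{hawkingmass2.intro} and using $|\Sigma_r|=4\pi r^2$ (so $\sqrt{|\Sigma_r|/(16\pi)}=r/2$) yields the decomposition
\[
\mathcal{E}(\Sigma_r)=\frac{r}{8\pi}\int_{\Sigma_r}\!\bigl(\tfrac{\mathrm{Sc}}{2}-\mathrm{Ric}(\nu,\nu)\bigr)\,d\mu
-\frac{r}{16\pi}\int_{\Sigma_r}|\mathring B|^2\,d\mu
+\frac{r}{32\pi}\int_{\Sigma_r}P^2\,d\mu.
\]

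Next I would apply Proposition~\ref{nertzlemma} to the first term with the choice $z=r\xi(r)$ (for the foliation of Theorem~\ref{exisfoli}; the same choice works for the family of Theorem~\ref{family}). Since $\Sigma_r=\Sigma_{\xi(r),r}$ is the Euclidean graph $\Phi^{u}_{\xi(r),r}(S_{\xi(r),r})$ of a function $u$ with $|u|+r|{}^\delta\nabla u|+r^2|{}^\delta\nabla^2 u|=\mathcal{O}(1)$ by Proposition~\ref{17EK}, the outward unit normal to $\Sigma_r$ is uniformly $\mathcal{O}(r^{-1})$-close to the Euclidean unit normal $(x-z)/R$, and $|z|\le cR$, $|\Sigma_r|\le cR^2$. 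Hence the hypotheses of Proposition~\ref{nertzlemma} hold and the first term converges to $E_{\mathrm{ADM}}$, which equals $m$ by Remark~\ref{remarkm}.

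For the two error terms I would use the decay assumptions. On $\Sigma_r$ one has $|x|\asymp r$, so $|k|=\mathcal{O}(r^{-2})$ gives $|P|=\mathcal{O}(r^{-2})$ and therefore $\int_{\Sigma_r}P^2\,d\mu=\mathcal{O}(r^{-2})$, making the last term $\mathcal{O}(r^{-1})$. For the $|\mathring B|^2$ term I would use that in the isotropic Schwarzschild background the coordinate spheres $S_r(r\xi)$ are umbilical up to order $\mathcal{O}(r^{-2})$ (the metric is conformally flat modulo $\sigma_{ij}=\mathcal{O}(r^{-2})$), so combined with the graph estimates $|{}^\delta\nabla^j u_{\xi,r}|=\mathcal{O}(r^{-j})$ from Proposition~\ref{17EK} one obtains $|\mathring B|_g=\mathcal{O}(r^{-2})$ pointwise on $\Sigma_{\xi,r}$, whence $\int_{\Sigma_r}|\mathring B|^2\,d\mu=\mathcal{O}(r^{-2})$ and this term is also $\mathcal{O}(r^{-1})$.

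The main obstacle is the $|\mathring B|^2$ estimate: the pointwise $\mathcal{C}^{3,\alpha}$ bounds on $u_{\xi,r}$ only directly yield $|B|=\mathcal{O}(r^{-1})$, and extracting the correct $\mathcal{O}(r^{-2})$ decay of the traceless part requires a careful first-order expansion of the second fundamental form of $\Sigma_{\xi,r}$ in the Schwarzschild metric, using that both the Schwarzschild correction to the round sphere and the graph perturbation contribute to $\mathring B$ only at order $r^{-2}$. This computation is routine but essential, and is the sole delicate ingredient beyond the reduction and the application of Proposition~\ref{nertzlemma}.
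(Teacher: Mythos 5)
Your proposal is correct and follows the same skeleton as the paper: the identity you derive from the Gauss equation and Gauss--Bonnet is exactly the paper's reduction, the ADM term is handled by Proposition~\ref{nertzlemma} with $z=r\xi(r)$ in both cases, and $\int_{\Sigma_r}P^2\,d\mu=\mathcal{O}(r^{-2})$ from the decay of $k$ is likewise identical. The one genuine divergence is the $\int_{\Sigma_r}|\mathring B|^2\,d\mu$ term, which you rightly single out as the delicate step: the paper does not expand $\mathring B$ at all, but instead uses that the leaves satisfy $H=\tfrac2r+\mathcal{O}(r^{-\epsilon})$ and cites the bootstrap/curvature estimate of Cederbaum--Sakovich (\cite[Proposition 1]{STCMC}) or Nerz (\cite[Proposition 3.7]{Nerz}) to get $\|\mathring B\|_{L^2(\Sigma_r)}=\mathcal{O}(r^{-1/2-\epsilon})$, which already kills the term after multiplication by $r$. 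Your alternative — a pointwise bound $|\mathring B|_g=\mathcal{O}(r^{-2})$ from the graph estimates of Proposition~\ref{17EK} — does work and is self-contained: rescaling by $r^{-1}$ turns $\Sigma_{\xi,r}$ into a radial graph over $S_1(\xi)$ with height of size $\mathcal{O}(r^{-1})$ in $C^2$, so its Euclidean traceless second fundamental form is $\mathcal{O}(r^{-1})$, hence $\mathcal{O}(r^{-2})$ after scaling back; the Schwarzschild part contributes nothing at this order by conformal covariance of $\mathring B$, and $\sigma=\mathcal{O}(|x|^{-2})$ contributes only $\mathcal{O}(r^{-3})$. So your route trades an external citation for a short computation (which you flag but do not carry out) and in fact yields a stronger decay than the paper's $L^2$ bound. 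One caveat you share with the paper rather than improve on: since $\xi(r)$ need not tend to $0$, the quantity $R=\min_{\Sigma}|x|$ in Proposition~\ref{nertzlemma} differs from the area radius $r$ by a factor close to $1-|\xi|$, so your claim that $\nu$ is $\mathcal{O}(r^{-1})$-close to $(x-z)/R$ is not literally right and the replacement of the prefactor $R$ by $r$ needs a word of justification (the paper's ``$r=R+\mathcal{O}(r^{-1})$'' glosses the same point); this is a presentational wrinkle in both arguments, not a gap specific to yours.
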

\begin{proof}
We begin by observing that the estimate from Proposition~\ref{nertzlemma}
may be applied in a \emph{translated asymptotically flat chart} centered at the vector 
$z = r\xi(r)$ associated with the surface $\Sigma(r) $.
Indeed, define the translated coordinates 
\[
x' := x - z,
\qquad 
\hat r := \min_{x\in\Sigma}|x - z|.
\]
Since the asymptotically flat metric satisfies 
$g_{ij}(x) = \delta_{ij} + \mathcal{O}(|x|^{-1})$ and its derivatives decay as 
$\mathcal{O}(|x|^{-2})$, we have for large $|x'|$ that $|x'+z| \simeq |x'|$
whenever $\frac{|z|}{|x'|} \text{ remains bounded.}
$

In our case $|z| = r|\xi|$ and $|\xi|\le 1-\tilde{\delta}$, hence 
$|z|\le (1-\tilde{\delta})r \le C\,\hat r$, and therefore 
$\frac{|z|}{|x'|}$ is uniformly bounded in the asymptotic region.
Consequently, the metric remains asymptotically flat 
in the translated chart:
\[
g_{ij}(x'+z) = \delta_{ij} + \mathcal{O}(|x'|^{-1}), 
\qquad 
\partial g_{ij}(x'+z) = \mathcal{O}(|x'|^{-2}),
\]
and the same decay holds for higher derivatives.
Thus, Proposition~\ref{nertzlemma} applies  with $\hat r$ in place of the 
original inner radius $r_{min}$.
This translation merely recenters the coordinate system near the geometric 
center $z=r\xi$ of $\Sigma(r)$ and does not affect any asymptotic 
quantities such as the ADM energy.

We now check that the remaining hypotheses Proposition \ref{nertzlemma} are satisfied for the surface $\Sigma(r)$ in the 
translated chart, with $z=r\xi$ and $\hat r = \min_{\Sigma(r)}|x-z|$.  First note that  $\Sigma(r)$ is a normal graph of $u_r$ over $S_r(r\xi(r))$ and  by Proposition \ref{17EK}, $\|u_r\|_{\mathcal{C}^2}$ is bounded independent of $r$ then  
$$\hat r = \min_{x\in\Sigma(r)}|x-r\xi(r)|\leq \|u_r\|_{\mathcal{C}^0} +r\leq C r$$ 
for some constant $C$.  Given $x\in \Sigma(r)$ then there exit a $y \in S_r(r\xi(r)) $ such that $x=y+u_r(y)\frac{y-r\xi(r)}{r}$ and 
$$r=|y-r\xi(r) |=|x-u_r(y)\frac{y-r\xi(r)}{r}-r\xi(r) | \leq  |x-r\xi(r) |+\|u_r\|_{\mathcal{C}^0} \quad \forall x\in \Sigma(r) .$$ 
Then $r \leq \hat{r} + \|u_r\|_{\mathcal{C}^0}$ and  there exist a constant $C'$ such that $r \leq  C'\hat{r}$. This implies that $|r -\hat{r}| \leq \|u_r\|_{\mathcal{C}^0} $ and there is a constant $\tilde{C}$ such that $ \frac{1}{\tilde{C}}\hat{r}\leq r \leq\tilde{C} \hat{r}$,  therefore we can consider the two radius to be equivalent for our purposes.  

Note that in these new coordinates the vector $z$ of the proposition is zero, then it follows directly that $|z|=0 \leq \hat{r}$ and
$|\Sigma(r)| \leq c \hat{r}^2$. It remains to check the condition  on the normal vector.  Note that since $\Sigma(r)$ is a graph over $S_r(r\xi(r))$, it holds that  $\left| \nu - \frac{x - r\xi(r)}{ r} \right|\leq C r^{-1}\|u_r\|_{\mathcal{C}^1} $ then with this we can see that.
\begin{equation*}
    \begin{split}
  \left| \nu - \frac{x' }{\, \hat{r}} \right| &=  \left| \nu - \frac{x - r\xi(r)}{ \hat{r}} \right| \leq \left| \nu - \frac{x - r\xi(r)}{ r} \right| +\left|\frac{x - r\xi(r)}{ r} - \frac{x - r\xi(r)}{ \hat{r}} \right|\\
  &\leq Cr^{-1} \|u_r\|_{\mathcal{C}^1} + \left| \frac{1}{\hat{r}}-\frac{1}{r} \right| |x - r\xi(r)|
  \leq   C r^{-1}\|u_r\|_{\mathcal{C}^1}+ C r^{-2}|x - r\xi(r)|  \\
  &\leq C\hat{r}^{-1}
    \end{split}
\end{equation*}
Hence the assumptions of Proposition \ref{nertzlemma} hold for the translated 
surfaces $\Sigma(r)$, and we may apply its conclusion with $\hat{r}$ in 
place of $r_{min}$.

 By  Proposition \ref{nertzlemma} and that $|r -\hat{r}| \leq \|u_r\|_{\mathcal{C}^0} $  we have 
\begin{equation}
    \begin{split}
       m&= \frac{\hat{r}}{16\pi} \int_{\Sigma(r)} (\mathrm{Sc} - 2\,\mathrm{Ric}(\nu,\nu)) \, d\mu + \mathcal{O}(r ^{-\epsilon})\\
       &=  \frac{r}{16\pi} \int_{\Sigma(r)} (\mathrm{Sc} - 2\,\mathrm{Ric}(\nu,\nu)) \, d\mu+ \frac{\hat{r} -r  }{16\pi} \int_{\Sigma(r)} (\mathrm{Sc} - 2\,\mathrm{Ric}(\nu,\nu)) \, d\mu + \mathcal{O}(r ^{-\epsilon}) \\
       &= \frac{r}{16\pi} \int_{\Sigma(r)} (\mathrm{Sc} - 2\,\mathrm{Ric}(\nu,\nu)) \, d\mu + \mathcal{O}(r ^{-\epsilon})
    \end{split}
\end{equation}

 Then by the  Gauss equation, Gauss-Bonnet theorem and  that by the decay of $k$ it holds $\sqrt{\frac{|\Sigma(r)|}{16\pi}}  \int_{\Sigma(r)} P^2\,d\mu= \mathcal{O}(r ^{-\epsilon})$,  we have 
\begin{equation}
    \begin{split}
        |m - \mathcal{E}(\Sigma)| 
\leq&   \left| m - \frac{r}{16\pi} \int_{\Sigma(r)} (\mathrm{Sc} - 2\,\mathrm{Ric}(\nu,\nu)) \, d\mu \right|
+ \left| \frac{r}{16\pi} \int_{\Sigma(r)} (\mathrm{Sc} - 2\,\mathrm{Ric}(\nu,\nu)) \, d\mu - \mathcal{E}(\Sigma(r)) \right|\\
\leq& \left|  \frac{r}{16\pi}\int_{\Sigma(r)}\left(\mathrm{Sc}^{\Sigma} - \frac{H^2}{2} + |\mathring{B}|^2\right)d\mu 
- \sqrt{\frac{|\Sigma(r)|}{16\pi}}\left(1 - \frac{1}{16\pi} \int_{\Sigma(r)}H^2  d\mu\right) \right| + \mathcal{O}(r ^{-\epsilon}) \\
\leq & \frac{r}{16\pi}\int_{\Sigma(r)} |\mathring{B}|^2d\mu  + \mathcal{O}(r ^{-\epsilon})
    \end{split}
\end{equation}
for some $\epsilon>0$.

To estimate $\int_{\Sigma(r)} |\mathring{B}|^2d\mu$ first note that since $\Sigma(r)$ is a Hawking surface it satisfies equation (\ref{eulag}), which like in (\ref{shorteug}) and (\ref{explaineulag}) can be written as $ H\lambda+ W_1 + W_2=0$,and  by the decay of $k$ it reduces to $ H\lambda+ W_1 =\mathcal{O}(r^{-5})$. Dividing the  equation by $H>0$, integrating over $\Sigma(r)$,
and integrating by parts the term $\Delta_\Sigma H/H$ gives
\begin{equation}
    \int_{\Sigma_r}\lambda+ |\nabla^\Sigma  \log H|^2 + \frac{1}{2} |\mathring{B}|^2 + \mathrm{Ric}(\nu,\nu) d\mu = \mathcal{O}(r^{-2}).
\end{equation}
 Since $\lambda$ is nonnegative (see Lemma \ref{expansquafoli}) and by the decay  $\mathrm{Ric}= \mathcal{O}(r^{-3}) $ we have 
\begin{equation}
    \int_{\Sigma(r)} |\mathring{B}|^2d\mu= \mathcal{O}(r^{-1})
\end{equation}
However this decay is not good enough to prove our result, in orther to improve the decacy we will use the  the \emph{bootstrapping result}
of Cederbaum--Sakovich \cite[Proposition 4.5]{STCMC},
which is an adaptation of the estimates in
Nerz \cite[Proposition 2.4]{Nerz} to the
spacetime mean--curvature (STCMC) setting. Roughly speaking, their result asserts that for a family of closed surfaces
$\Sigma\subset M$ which are small normal graphs over coordinate spheres
in an  $\mathcal{C}^{2, \frac{1}{2}+\varepsilon}$-asymptotically flat manifold, if the mean curvature satisfies
\[
H = \frac{2}{r} + \mathcal{O}(r^{-1-\varepsilon}),
\]
and if $\|\mathring{B}\|_{L^{2}(\Sigma)}$ is sufficiently small,
then one obtains the quantitative decay
\[
\|\mathring{B}\|_{L^{2}(\Sigma)} = \mathcal{O}(r^{-1/2-\varepsilon}),
\qquad
\|\mathring{B}\|_{L^{\infty}(\Sigma)} = \mathcal{O}(r^{-3/2-\varepsilon}),
\]
In our case, the spheres $\Sigma(r)$ constructed in
Theorem~\ref{exisfoli} and Theorem~\ref{family}
satisfy all these hypotheses:
they are small normal graphs over $S_{r}(r\xi(r))$,
the ambient metric $(M,g)$ is $\mathcal{C}^{2,1/2+\varepsilon}$--asymptotically flat, by the analysis above
$H=\tfrac{2}{r}+\mathcal{O}(r^{-1-\varepsilon})$
and $\|\mathring{B}\|_{L^{2}(\Sigma(r))}=\mathcal{O}(r^{-\frac{1}{2}})$.
Hence we can apply
\cite[Proposition 4.5]{STCMC}
directly to conclude that
\[
\|\mathring{B}\|_{L^{2}(\Sigma(r))}=\mathcal{O}(r^{-1/2-\varepsilon}).
\]
Substituting this decay into
\[
|m-\mathcal{E}(\Sigma(r))|
\le \frac{r}{16\pi}\!\int_{\Sigma(r)}|\mathring{B}|^{2}\,d\mu
+\mathcal{O}(r^{-\epsilon})
\]
gives the desired convergence
$\mathcal{E}(\Sigma_{r})\to m$ as $r\to\infty$.
\end{proof}
With this result, we have established that the Hawking energy evaluated on Hawking surfaces satisfies the large-sphere limit. A study of the small-sphere limit on such surfaces can be found in \cite{Alex, diaz2023local}. 
\begin{remark}
    Note that this large-sphere limit converges to the ADM energy and not the ADM mass $\sqrt{E_{ADM}^2- p_{ADM}^2}$, the reason for this is that when considering asymptotically flat manifolds (under standard decay assumptions), the decay of $k$ is too rapid to capture the momentum contribution in the limit. This not only happens with this foliation, the same occurs for the STCMC foliation of Cederbaum and Sakovich, see \cite[Proposition 5.6]{STCMC}. For this reason, we consistently view $\mathcal{E}(\Sigma)$  here as a quasi‐local energy, not a mass.
\end{remark}
\section{Center of the foliation}\label{sectioncenter}

The study of the center of geometric foliations in asymptotically flat manifolds has attracted substantial attention. The first major result in this direction was due to Huisken and Yau \cite{HY}. In an asymptotically flat, time-symmetric initial data set they constructed a unique foliation of the end by constant mean curvature (CMC) spheres. Under additional asymptotic symmetry assumptions,  the coordinate center of this foliation coincides with the Hamiltonian center of mass \(\vec{C}_{\mathrm{H}} = (C^1_{\mathrm{H}}, C^2_{\mathrm{H}}, C^3_{\mathrm{H}})\), which is given by
\begin{equation}\label{eq:BOM_center}
    C^l_{\text{H}} :=
    \frac{1}{16\pi E} \lim_{r \to \infty} \int_{|\vec{x}|=r}
    \left[
    x^l \sum_{i,j} \left( \partial_i g_{ij} - \partial_j g_{ii} \right) \frac{x^j}{r}
    - \sum_i
    \left(
    g_{il} \frac{x^i}{r} - g_{ii} \frac{x^l}{r}
    \right)
    \right]
    d\mu_\delta,   
\end{equation}
where \(E\) is the ADM energy and \(d\mu_\delta\) is the Euclidean volume element on the coordinate sphere of radius \(r\). See \cite{beigmurch, regge1974role} for further discussion of this formulation and its invariance properties.

This construction was generalized to the dynamical setting by Cederbaum and Sakovich in \cite{STCMC}, who constructed a unique foliation by STCMC (spacetime constant mean curvature) surfaces in appropriate asymptotically flat initial data sets. They proved that the coordinate center of that foliation converges, and identified its limit as a suitably generalized center of mass—the STCMC center of mass—which reduces to the Huisken–Yau/Hamiltonian center in the time-symmetric case. The center of the STCMC foliation can be written as the Hamiltonian center of mass plus the \(k\)-dependent correction
\begin{equation}\label{correctstcmc}
    \lim_{r\to\infty}\frac{r^2}{32\pi E_{ADM}}\int_{S_r(0)} \big(\pi(\nu,\nu)\big)^2\, \nu \, d\mu_\delta.
\end{equation}
Eichmair and Koerber \cite{willcen} showed that, in general, the center of the Willmore foliation need not converge. However, under more restrictive assumptions on the scalar curvature, the center of the Willmore foliation is well defined and coincides with the Hamiltonian center of mass.
\begin{theorem}[{\cite[Theorem 2]{willcen}}]\label{willcenmain}
  Let \((M, g)\) be \(\mathcal{C}^4\)-asymptotic to Schwarzschild with mass \(m > 0\) and Hamiltonian
center of mass \(C_H = (C_H^1, C_H^2, C_H^3)\) and suppose that the scalar curvature satisfies, as \(x \to \infty\),
\begin{equation}
  \tilde{x}^i \partial_i(|\tilde{x}|^2 \mathrm{Sc} (\tilde{x})) \leq o(|x|^{-3}), \quad    \mathrm{Sc}(\tilde{x}) - \mathrm{Sc}(-\tilde{x}) = o(|x|^{-5}),
\end{equation}
where \(\tilde{x} = x - C_H\). Then the center of the foliation by Willmore spheres \(C_{ACW}\) exists and \(C_H = C_{\text{ACW}}\).  
\end{theorem}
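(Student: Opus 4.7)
The plan is to identify the coordinate center of the Willmore foliation with the Hamiltonian center of mass $C_H$ by leveraging the Lyapunov--Schmidt reduction used in the existence proof and translating the critical point equation for $\xi(r)$ into the flux integral defining $C_H$.

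First, I would reduce the problem to computing $\lim_{r\to\infty} r\xi(r)$. Each leaf of the foliation is $\Sigma_{\xi(r),r}=\Phi^{u_{\xi(r),r}}_{\xi(r),r}(S_{\xi(r),r})$ with $u_{\xi(r),r}\perp\Lambda_1(S_{\xi(r),r})$, so the $L^2$-orthogonality to the first spherical harmonics implies that the Euclidean barycenter of $\Sigma_{\xi(r),r}$ agrees with $r\xi(r)$ up to terms quadratic in $u_{\xi(r),r}$. The pointwise bounds from Proposition~\ref{17EK} (and its $k=0$ specialization) make that correction $\mathcal{O}(r^{-1})$, so the coordinate center of the foliation, if it exists, equals $\lim_{r\to\infty} r\xi(r)$.

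Second, I would sharpen the asymptotic analysis of $G_r$ near its minimizer. In the time-symmetric case one has the decomposition $G_r=G_1+G_{r,2}+\mathcal{O}(r^{-1})$ from Lemma~\ref{convexity}, with $\bar D G_1(\xi)=256\pi\,\xi+\mathcal{O}(|\xi|^3)$ and, by direct differentiation of the volume term,
\begin{equation*}
\partial_j G_{r,2}(\xi)=-2r^2\int_{S_r(r\xi)}\mathrm{Sc}\,\nu_j\,d\mu_\delta.
\end{equation*}
The critical point equation $\bar D G_r(\xi(r))=0$ then becomes, after multiplication by $r$,
\begin{equation*}
256\pi\,r\xi(r)=2r^3\int_{S_r(r\xi(r))}\mathrm{Sc}\,\nu\,d\mu_\delta+o(1),
\end{equation*}
where the strengthened radial decay $\tilde x^i\partial_i(|\tilde x|^2\mathrm{Sc})\le o(|\tilde x|^{-3})$, one power of $|x|^{-1}$ sharper than the analogous hypothesis of Theorem~\ref{exisfoli}, is exactly what turns the remainder into $o(1)$ rather than $\mathcal{O}(1)$. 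The task thus reduces to identifying this scalar-curvature flux with the right-hand side of (\ref{eq:BOM_center}).

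Third, I would perform this identification by shifting the sphere of integration to be centered at $C_H$ via a Taylor expansion and splitting $\mathrm{Sc}$ into its even and odd parts relative to the involution $\tilde x\mapsto -\tilde x$, $\tilde x=x-C_H$. The parity hypothesis $\mathrm{Sc}(\tilde x)-\mathrm{Sc}(-\tilde x)=o(|x|^{-5})$ makes the odd-part contribution vanish as $r\to\infty$, while the even part integrated against $\nu$ over a sphere centered at $C_H$ vanishes by symmetry. The remaining contribution records the displacement $r\xi(r)-C_H$; via the Gauss equation and a Herzlich--Miao-type integration by parts in the spirit of Proposition~\ref{nertzlemma}, it converts into the boundary integral (\ref{eq:BOM_center}) defining $C_H$, giving a relation of the form $r\xi(r)=C_H+o(1)$. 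The main obstacle is this third step: one must produce the Beig--\'O Murchadha expression for $C_H$ from the scalar-curvature flux under the minimal stated hypotheses, tracking simultaneously the sharpened radial condition (which controls the error when moving the sphere across annuli) and the parity condition (which discards the odd contribution), and verifying along the way that the perturbation $u_{\xi(r),r}$ does not spoil the flux identity. Concretely, this amounts to pushing the expansions in Lemmas~\ref{expansquafoli} and~\ref{convexity} one order further in $r^{-1}$ and combining them with the standard identification of scalar-curvature flux with the Hamiltonian center of mass in asymptotically Schwarzschild coordinates.
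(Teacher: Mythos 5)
Your step 2 is where the argument breaks down. The decomposition $G_r=G_1+G_{r,2}+\mathcal{O}(r^{-1})$ controls the \emph{values} of $G_r$; to use the critical-point equation you differentiate in $\xi$ and multiply by $r$, and nothing makes $r\,\bar{D}\bigl(\mathcal{O}(r^{-1})\bigr)=o(1)$ — the sharpened radial hypothesis on $\mathrm{Sc}$ controls scalar-curvature contributions, not the metric-perturbation terms hidden in that remainder. Those hidden terms are exactly where $C_H$ comes from: the actual proof (carried out in \cite{willcen} and reproduced for $k\neq 0$ in Section \ref{sectioncenter}) needs the refined expansions of Lemma \ref{decayutil} and of $\int_{\Sigma_{\xi,r}}H^2\,d\mu$ against $\int_{S_{\tilde{\xi},\tilde{r}}}H^2\,d\mu$, which produce an order-one $\sigma$-dependent term in the identity
\[
256\pi\, r\xi(r)=2r^3\int_{S_r(r\xi(r))}\mathrm{Sc}\,\nu\,d\mu_\delta-8r\int_{S_r(0)}\Bigl(\bar{D}\operatorname{tr}\sigma-\bar{D}\sigma(\nu,\nu)-\tfrac{2}{r}\operatorname{tr}\sigma\,\nu\Bigr)d\mu_\delta+\mathcal{O}(r^{-1}),
\]
and it is the lengthy calculation of \cite[Proposition 18]{willcen} that identifies this $\sigma$-term with $256\pi C_H$, yielding \eqref{lambdaxi} with $W_2=0$. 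Your step-2 display, which asserts $256\pi\,r\xi(r)=2r^3\int\mathrm{Sc}\,\nu\,d\mu_\delta+o(1)$ with no such term, is therefore false in general.

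Because of this, step 3 cannot be repaired as written: under the theorem's hypotheses the flux $r^3\int_{S_r(r\xi(r))}\mathrm{Sc}\,\nu\,d\mu_\delta$ is $o(1)$ — the even part of $\mathrm{Sc}$ about $C_H$ integrates to zero against $\nu$, the odd part is $o(|x|^{-5})$, and the radial condition controls the translation of the center from $C_H$ to $r\xi(r)$ — so it cannot be ``converted'' via a Gauss-equation/Herzlich–Miao integration by parts into the Beig–\'O Murchadha integral; it simply vanishes in the limit, and combined with your step-2 identity it would give $\lim_{r\to\infty} r\xi(r)=0$, not $C_H$. The logic of the genuine proof is the reverse of yours: $C_H$ arises from the $\sigma$-terms of the refined Lyapunov–Schmidt expansion, while the parity and radial conditions on $\mathrm{Sc}$ (centered at $C_H$) exist precisely to kill the residual scalar-curvature flux — this sensitivity of the Willmore center to the distribution of $\mathrm{Sc}$ is the whole point of the hypotheses. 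Your step 1 is essentially correct and matches the paper's reduction via $\tilde{u}=\mathcal{O}(r^{-1})$, though you would also need the quantitative decay $\xi(r)=\mathcal{O}(r^{-1})$ (Lemma \ref{extixi}, \cite[Lemma 11]{willcen}) before any of the translation/Taylor arguments can be run.
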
 
It is then natural to ask whether the center of the foliation by Hawking surfaces  is well defined and if it is somehow related to the STCMC center of mass constructed in \cite{STCMC}. To address this, we study the coordinate center of the foliation produced in Theorem \ref{exisfoli}.

The analysis in this section follows the general strategy developed by 
Eichmair and Koerber \cite{willcen} for the Willmore foliation, adapted here to the setting of area–constrained Hawking spheres.
In particular, several steps---such as the quantitative estimates for the center parameter and the control of geometric quantities along the family of surfaces---parallel
the corresponding arguments in~\cite{willcen}, 
with the necessary modifications to account for the additional $k$--terms 
and the different Euler--Lagrange structure of the Hawking functional. We therefore outline only the main differences and highlight the new estimates specific to the present setting. First we outline the main steps to guide the reader.

$i)$  \textbf{Improved control of the translation parameter.}
  Under strengthened parity/decay hypotheses  we sharpen the bound on the centering parameter \(\xi(r)\) so that it cannot drift, this prevents the leaves and their centers from escaping to infinity.

   $ii)$ \textbf{Recentering and removing the constant mode.} We rewrite our surfaces as graphs over coordinate spheres with a radius $r-2$. This radius is chosen so that the graph function \(u\) has vanishing average; this eliminates the constant mode and isolates the genuine translation information in \(\xi(r)\).

$iii)$  \textbf{Schwarzschild comparison for geometric quantities.}   geometric data of \(\Sigma(r)\) is  compared to those of the corresponding coordinate sphere in the Schwarzschild metric. Here most of the $k$ terms can be treated as a perturbation therefore required estimates coincide with the comparison bounds in~\cite{willcen} and carry over  in our setting.

$iv)$ \textbf{Expansion for the center and identification of the limit.} Finally, again following \cite{willcen} we get a nice expression for $r\xi(r)$ which allow us to study the center of the foliation.

As observed earlier, the foliations need not be centered in general. This indicates that centering requires stronger asymptotic symmetry and decay. Accordingly, we impose the following additional hypotheses on the scalar curvature and the tensor $k$:  
\begin{equation}\label{paritycond}
   |\mathrm{Sc}^{\text{odd}} |=\mathcal{O} ( |x|^{-5}), \quad  |k^{\text{even}}| + |x||(\nabla k)^{\text{odd}}| = \mathcal{O} (|x|^{-3})
\end{equation}
and 
\begin{equation}
    x^i \partial_i (|x|^2 \mathrm{Sc}) \leq \mathcal{O}( |x|^{-3}),
\end{equation}
 These symmetry assumptions are a direct consequence of the Regge-Teitelboim conditions introduced in \cite{regge1974role}. In particular, they guarantee the convergence of the Hamiltonian center of mass.  Note also that under \eqref{paritycond} the correction term (\ref{correctstcmc}) of the STCMC center of mass vanishes.

One of the main reasons to have the extra assumptions is to improve the decay of $\xi(r)$ in the foliation.
\begin{lemma}\label{extixi}
   Under the assumptions of before, there holds, as \(r \to \infty\),
\[
\xi(r) = \mathcal{O} (r^{-1}).
\]
\end{lemma}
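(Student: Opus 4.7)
The plan is to upgrade the bound on $\overline{D}G_r(0)$ from $\mathcal{O}(1)$, implicit in (\ref{DGexp}), to $\mathcal{O}(r^{-1})$ under the added parity hypotheses (\ref{paritycond}), from which $|\xi(r)| = \mathcal{O}(r^{-1})$ follows by the strict convexity of $G_r$. Recall the gradient decomposition $\overline{D}G_r = \overline{D}G_1 + \overline{D}G_{r,2} + \overline{D}G_{r,3}$ used in Lemma \ref{convexity}, together with $\overline{D}G_1(0)=0$, so it suffices to show that $\partial_i G_{r,2}(0)$ and $\partial_i G_{r,3}(0)$ are each $\mathcal{O}(r^{-1})$.

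For $G_{r,2}$, differentiating under the integral sign gives $\partial_i G_{r,2}(0) = -2r\int_{S_r(0)} x^i \mathrm{Sc}(x)\, d\mu_\delta$. Splitting $\mathrm{Sc}=\mathrm{Sc}^{\text{even}}+\mathrm{Sc}^{\text{odd}}$, the even part yields an odd integrand $x^i\mathrm{Sc}^{\text{even}}$ which vanishes by antipodal symmetry on $S_r(0)$, while the odd part is bounded by $2r\cdot r\cdot 4\pi r^2\cdot\mathcal{O}(r^{-5})=\mathcal{O}(r^{-1})$ thanks to $|\mathrm{Sc}^{\text{odd}}|=\mathcal{O}(|x|^{-5})$.

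For $G_{r,3}$, the first-variation identity (\ref{firstvary}) yields $\partial_i G_{r,3}(0) = -2r^2\int_{S_r(0)} x^i W_2\,d\mu_\delta$, and the main work is a termwise parity count for $W_2$. Since the Schwarzschild background is rotationally symmetric, its metric, Christoffel symbols, mean curvature, and the Riemannian unit normal $\nu = x/|x| + (\text{even})$ on $S_r(0)$ are leading-order even in the coordinate chart, while (\ref{paritycond}) forces $k$ to be leading-order odd and $\nabla k$ to be leading-order even. A direct check then shows that $P=-\pi(\nu,\nu)$ and $\nabla_\nu \tr k - \nabla_\nu k(\nu,\nu)$ are leading-order odd; the covector field $Pk(\cdot,\nu)$ is leading-order odd, so integration by parts against $x^i$, using that $\nabla^\Sigma x^i = e_i - \nu^i\nu$ is leading-order even, converts the divergence contribution to an odd integrand; and $HP^2$ is leading-order even. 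Thus the leading-order part of $x^i W_2$ is odd on $S_r(0)$ and its integral vanishes by antipodal symmetry. The residual error, stemming from $|k^{\text{even}}|=\mathcal{O}(|x|^{-3})$ and $|(\nabla k)^{\text{odd}}|=\mathcal{O}(|x|^{-4})$, makes $W_2$ deviate from this leading odd part by $\mathcal{O}(r^{-6})$ pointwise, whence $|\partial_i G_{r,3}(0)| \leq 2r^2\cdot r\cdot 4\pi r^2\cdot \mathcal{O}(r^{-6}) = \mathcal{O}(r^{-1})$.

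Combining the two bounds gives $\overline{D}G_r(0) = \mathcal{O}(r^{-1})$. Taylor expanding $\overline{D}G_r$ about the origin and using $\overline{D}G_r(\xi(r))=0$ yields
\begin{equation*}
\overline{D}^2 G_r(0)\bigl(\xi(r),\,\cdot\,\bigr) = -\overline{D}G_r(0) + \mathcal{O}(|\xi(r)|^2).
\end{equation*}
Pairing with $\xi(r)$, the convexity bound $\overline{D}^2 G_r(0)\geq \tau\,\mathrm{Id}$ from Lemma \ref{convexity} and the smallness of $|\xi(r)|$ from (\ref{estixi}) give $\tau|\xi(r)| \leq \mathcal{O}(r^{-1}) + \mathcal{O}(|\xi(r)|^2)$; absorbing the quadratic term yields $|\xi(r)|=\mathcal{O}(r^{-1})$. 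The main technical obstacle is the parity bookkeeping for $W_2$ — especially handling the surface divergence term through integration by parts — and tracking that the residual contributions are genuinely controlled by the off-parity bounds in (\ref{paritycond}).
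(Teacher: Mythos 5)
Your proposal is correct and is essentially the paper's own argument: the paper likewise kills the leading parts of $\partial_i G_{r,2}$ and $\partial_i G_{r,3}$ by antipodal parity on spheres centered at the origin (Taylor-expanding the $G_{r,3}$ integral about $0$, exactly your expansion) and then combines criticality of $\xi(r)$ with the convexity/radial growth of $G_r$; your variant of bounding the full gradient at $0$ and invoking $\overline{D}^2 G_r \ge \tau\,\mathrm{Id}$ together with a Taylor expansion of the gradient, rather than the paper's direct lower bound $|\xi|^{-1}\xi^i\partial_i G_1(\xi)\ge 256\pi|\xi|$ at $\xi(r)$ with absorption of the $\mathcal{O}(\eta_3|\xi|)$ terms, is only a cosmetic repackaging. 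One slip in your bookkeeping prose: the Schwarzschild Christoffel symbols and the components of the unit normal are leading-order \emph{odd}, not even, but the parities you actually feed into the count ($P$, $\nabla_\nu \tr k-\nabla_\nu k(\nu,\nu)$ and $P\,k(\cdot,\nu)$ odd; $HP^2$ and $\nabla^\Sigma x^i$ even) are the correct ones, so the cancellation and the resulting $\mathcal{O}(r^{-1})$ bound stand.
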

\begin{proof}
As in Lemma \ref{convexity} we have 
\begin{equation}
    \Bar{D}_\xi G_r = \Bar{D}_\xi G_1 + \Bar{D}_\xi G_{r,2}+\Bar{D}_\xi G_{r,3}, 
\end{equation}
 using $ x^i \partial_i (|x|^2 R) \leq \mathcal{O}( |x|^{-3})$, the parity conditions and using the same argument as in \cite[Lemma 11]{willcen} (or \cite[Lemma 24]{eichko}) we have 
\begin{equation}
   -2 r^2 \int_{S_{\xi,r}} |\xi|^{-1} \delta( \xi, \nu ) \,\mathrm{Sc} \, d\mu_{\delta} \geq  - \mathcal{O} (r^{-1}). 
\end{equation}
Now to estimate $\Bar{D}_\xi G_{r,3} $ we use again the Taylor expansion (\ref{taylorexp}) 
\begin{equation}
\begin{split}
  |\xi|^{-1} \xi^i (\partial_i G_{r,3})(\xi) =& -r^2 \int_{S_{r}(0)}   2 W_2 \alpha d\mu_{\delta} -  r^2\int_{S_{r}(0)}   2 \mathcal{D}W_2\left(r \delta( \frac{\xi}{|\xi|}, \nu ), r \delta( \xi, \nu ) \right) d\mu_{\delta} \\
 &- \frac{r^2}{2} \Bar{D}^2 \left(\int_{S_{r}(\xi r)}  2 W_2 \alpha d\mu_{\delta}\right)_{|\xi = \Tilde{\xi}}(r \xi, r \xi).\\
 \end{split}
\end{equation}
This time, using the parity conditions on $k$ and $\nabla k$ it is direct to see that the first term satisfies 
\begin{equation*}
\begin{split}
 r^2 \int_{S_{r}(0)}   2 W_2 \alpha d\mu_{\delta}   &= 2r^2 \int_{S_r(0)}  \pi(\nu, \nu) r \delta( \frac{\xi}{|\xi|}, \nu ) \nabla_{\nu}\pi(\nu,\nu) +2 \pi(\nu, \nu)    k( \frac{\xi}{|\xi|} , \nu )+  \delta( \frac{\xi}{|\xi|}, \nu )  \pi(\nu, \nu)^2  d\mu_{\delta}\\
 &=\mathcal{O} (r^{-1}).
 \end{split}
\end{equation*}
 As in (\ref{estitaylor}) we can estimate the other two terms by $O( \eta_3 |\xi|)  $, obtaining 
 \begin{equation}
\begin{split}
    |\xi|^{-1} \xi^i (\partial_i G_{r,3})(\xi) \geq& -\mathcal{O}(r^{-1}) -\mathcal{O}(\eta_3 |\xi|).
   \end{split}
\end{equation}
 Using that \((DG_{r})(\xi(r)) = 0\), and
\[
|\xi(r)|^{-1}  \xi(r)^i (\partial_i G_1)(\xi(r)) \geq 256 \pi |\xi(r)|.
\]
 we find   
 \begin{equation}
     0 \geq 256 \pi |\xi(r)| - O( \eta_3  |\xi(r)|) - \mathcal{O} (r^{-1}) 
 \end{equation}
and then we obtain the result for $\eta_3$ small enough.
\end{proof}
This estimate is crucial to identify a consistent geometric center for the foliation; without it, the notion of center could drift with the radius.

We have seen that each leaf $\Sigma_{\xi, r}$ of the foliation can be expressed as the graph over $ S_{\xi, r}$ of a function $u= -2 + \mathcal{O}(|\xi|^2)$, this implies that when comparing $\Sigma_{\xi, r}$ with $S_{\xi,r}$ the difference would be given by $-2 + \mathcal{O}(|\xi|^2) $. The comparison between the two surfaces will be important when trying to study the center of the surfaces $\Sigma_{\xi, r}$ in terms of the center of the spheres $ S_{\xi,r}$, that is why it is more practical to consider the spheres $S_{\xi,r}$ with a different radius $\tilde{r} = r - 2 $ and have the difference between $\Sigma_{\xi, r}$ and $ S_{\xi,r}$ be given by the error term $\mathcal{O}(|\xi|^2) $. Summarizing, following \cite{willcen} we define:
\[
\tilde{u}_{\xi,r} = u_{\xi,r} + 2
\]
so that
\[
\Sigma_{\xi,r} = \Sigma_{\tilde{\xi},\tilde{r}}(\tilde{u}_{\xi,r})
\]
with
\[
\tilde{r} = r - 2 \quad \text{and} \quad \tilde{\xi} = (r - 2)^{-1} r \xi.
\]
Where $r\xi = \tilde{r} \tilde{\xi} $, therefore $S_{\tilde{\xi},\tilde{r}} $ and $S_{\xi,r} $ have the same center.

We abbreviate \(u_{\xi,r}\) and \(\tilde{u}_{\xi,r}\),   by \(u\) and \(\tilde{u}\) respectively. Moreover, we let \(\Lambda_0(S_{\tilde{\xi},\tilde{r}}) \subset \mathcal{C}^\infty(S_{\tilde{\xi},\tilde{r}})\) be the space of constant functions and \(\Lambda_0^\perp(S_{\tilde{\xi},\tilde{r}})\) be its orthogonal complement. We abbreviate \(\Lambda_0(S_{\tilde{\xi},\tilde{r}})\) by \(\Lambda_0\) and \(\Lambda_0^\perp(S_{\tilde{\xi},\tilde{r}})\) by \(\Lambda_0^\perp\).

 With these definitions and the previous results, we obtain the following result, which is the same as  \cite[Lemma 13]{willcen} and has the same proof. 
\begin{lemma}\label{decayutil}
    There exists \(\Tilde{\delta} \in (0, 1/4)\) such that
\[
\tilde{u} = \mathcal{O} (|\xi|^2) +  \mathcal{O}(r^{-1})
\]
and, uniformly for every \(\xi \in \mathbb{R}^3\) with \(|\xi| < \Tilde{\delta}\) as \(r \to \infty\),
\[
\operatorname{proj}_{\Lambda_0} \tilde{u} = -r^{-1} - \frac{1}{16\pi} r^{-1} \int_{S_{r, \xi}} [\operatorname{tr} \sigma - \sigma(\nu, \nu)] \, d\mu_{\delta} + \mathcal{O} (r^{-2}) + \mathcal{O}(r^{-1} |\xi|^2),
\]
where $\sigma $ is the decay tensor of the metric introduced in the Definition \ref{asymptoticsc} of an asymptotically Schwarzschild manifold. These expansions can be differentiated once with respect to \(\xi\).
\end{lemma}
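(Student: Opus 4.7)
My plan is to prove the two claims separately. The pointwise bound $\tilde u = \mathcal{O}(|\xi|^2) + \mathcal{O}(r^{-1})$ will be immediate from the expansion of $u$ in Lemma~\ref{expansquafoli}, while the projection formula for $\operatorname{proj}_{\Lambda_0}\tilde u$ will be extracted from the area constraint $|\Sigma_{\xi,r}|_g = 4\pi r^2$ provided by Proposition~\ref{17EK}. Since area is a purely metric quantity the tensor $k$ does not enter, so the argument parallels \cite[Lemma 13]{willcen} with no essentially new analytic input.

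First I would substitute $\tilde u = u + 2$ into the series
\[
u = -2 + 4\sum_{\ell\geq 2} \frac{|\xi|^\ell}{\ell} P_\ell(-|\xi|^{-1}\delta(y,\xi)) + \mathcal{O}(r^{-1})
\]
from Lemma~\ref{expansquafoli}. The constant $-2$ cancels, and the remaining series begins at $\ell = 2$, giving $\tilde u = \mathcal{O}(|\xi|^2) + \mathcal{O}(r^{-1})$ uniformly in $|\xi| < \tilde\delta$ by the uniform boundedness of the Legendre polynomials on $[-1,1]$.

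For the projection formula I would regard $\Sigma_{\xi,r}$ as the normal graph of $\tilde u$ over $S_{\tilde\xi,\tilde r}$ and expand its $g$-area around the unperturbed sphere,
\[
4\pi r^2 = |S_{\tilde\xi,\tilde r}|_g + \int_{S_{\tilde\xi,\tilde r}} H_g \, \tilde u \, d\mu_g + (\text{quadratic in }\tilde u).
\]
Using $H_g = 2/\tilde r + \mathcal{O}(\tilde r^{-2})$ and the first part, the linear term reduces to $8\pi\tilde r \cdot \operatorname{proj}_{\Lambda_0}\tilde u$ up to error $\mathcal{O}(r^{-1}) + \mathcal{O}(|\xi|^2)$. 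To evaluate $|S_{\tilde\xi,\tilde r}|_g$, I would decompose $g = (1+1/|x|)^4\delta + \sigma$ (recall $m = 2$): the conformal integral $\int_{S_{\tilde\xi,\tilde r}}(1+1/|x|)^4\,d\mu_\delta$ equals $4\pi \tilde r^2(1+1/\tilde r)^4 + \mathcal{O}(|\xi|^2)$, and expanding $\tilde r = r-2$ yields $4\pi r^2 + 8\pi + \mathcal{O}(r^{-1})$; the $\sigma$ contribution gives $\frac{1}{2}\int_{S_{\tilde\xi,\tilde r}}[\tr\sigma - \sigma(\nu,\nu)]\,d\mu_\delta + \mathcal{O}(r^{-2})$ via the first-order expansion of $\sqrt{\det g_\Sigma/\det \delta_\Sigma}$. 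Solving for $\operatorname{proj}_{\Lambda_0}\tilde u$, using $1/\tilde r = 1/r + \mathcal{O}(r^{-2})$, and observing that replacing the integration sphere by $S_{r,\xi}$ costs only $\mathcal{O}(r^{-1})$ (integrand of size $|x|^{-2}$ on a sphere-area difference of $\mathcal{O}(r)$) yields the claimed identity.

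For differentiability in $\xi$ I would differentiate both the area identity and the series expansion once in $\xi$, using that $(\bar D u)|_{(\xi,r)} = \mathcal{O}(r^{-1})$ from Proposition~\ref{17EK} and that the $\xi$-derivatives of the conformal and $\sigma$ integrals retain the same order as their undifferentiated counterparts (by direct differentiation under the integral, since the integrands depend smoothly on the center). The main obstacle I anticipate is the bookkeeping of cross terms: one must verify that no contribution arising from the interaction of the $\sigma$-perturbation with the $|\xi|^2$ corrections exceeds $\mathcal{O}(r^{-1}|\xi|^2)$, and that this remains true after one derivative in $\xi$; this is tedious but routine once the ingredients above are assembled.
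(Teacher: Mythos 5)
Your proposal is correct and follows essentially the same route as the paper, which simply invokes the proof of \cite[Lemma 13]{willcen}: the pointwise bound comes from the spherical-harmonics expansion of $u$ in Lemma \ref{expansquafoli}, and the $\Lambda_0$-projection is extracted from the area constraint $|\Sigma_{\xi,r}|=4\pi r^2$ by expanding the $g$-area of the graph over $S_{\tilde\xi,\tilde r}$ into the Schwarzschild conformal part (computed via the exact shell-potential identity $\int_{S_{\tilde r}(\tilde r\tilde\xi)}|x|^{-1}d\mu_\delta=4\pi\tilde r$) plus the linear $\sigma$-contribution $\tfrac12\int[\tr\sigma-\sigma(\nu,\nu)]\,d\mu_\delta$, exactly as you outline, with $k$ playing no role. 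The only point to make explicit in the ``quadratic in $\tilde u$'' and normal-direction correction terms is that you need the scaled derivative estimates on $u$ (e.g.\ the graded bounds of Proposition \ref{17EK} and the differentiability of the expansion in Lemma \ref{expansquafoli}), since the pointwise bound $\tilde u=\mathcal{O}(|\xi|^2)+\mathcal{O}(r^{-1})$ alone does not control $\int|\nabla^\Sigma\tilde u|^2\,d\mu$ at the required order $\mathcal{O}(|\xi|^4)+\mathcal{O}(r^{-2})$.
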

As mentioned above, this lemma shows that as $\Sigma_{\xi,r}$ is the graph of $\tilde{u}_{\xi,r}$ over $S_{\tilde{\xi},\tilde{r}} $ the difference between  $\Sigma_{\xi,r}$   and $S_{\tilde{\xi},\tilde{r}} $ is of order $\mathcal{O}(r^{-1})$. This is why we subtract the constant part of $u $ to define $\tilde{u}$. Note also that in the proof of the lemma, $k$ does not play any role.

We also have the following result equivalent to \cite[Lemma 14]{willcen}
\begin{lemma}
    There exists \(\Tilde{\delta} \in (0, 1/4)\) such that, uniformly for every \(\xi \in \mathbb{R}^3\) with \(|\xi| < \Tilde{\delta}\) as \(r \to \infty\),
\begin{equation}\label{intemean}
\int_{\Sigma_{\xi,r}} H^2 \, d\mu = \int_{S_{\tilde{\xi},\tilde{r}}} H^2 \, d\mu - 64 \pi r^{-3} - 4 r^{-3} \int_{S_{\xi,r}} \operatorname{tr} \sigma - \sigma(\nu, \nu)\,  d\mu_{\delta} + \mathcal{O}(r^{-3} |\xi|^2) + \mathcal{O}(r^{-2} |\xi|^4) + \mathcal{O}(r^{-4})
\end{equation}
\begin{equation}\label{comparp}
    \int_{\Sigma_{\xi,r}} P^2 \, d\mu = \int_{S_{\tilde{\xi},\tilde{r}}} P^2 \, d\mu_{\delta} + \mathcal{O}(r^{-4}).
\end{equation}
This expansion may be differentiated once with respect to \(\xi\).
\end{lemma}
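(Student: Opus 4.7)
The plan is to write $\Sigma_{\xi,r}$ as a normal graph of $\tilde u$ over the coordinate sphere $S_{\tilde\xi,\tilde r}$ and to expand each functional around $\tilde u\equiv 0$. Lemma \ref{decayutil} supplies the uniform smallness bound on $\tilde u$ and, crucially, the explicit form of its $\Lambda_0$-projection; this constant mode will carry the leading non-trivial contribution to \eqref{intemean}.

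For \eqref{intemean} the cleanest route is via the Gauss equation
\begin{equation*}
H^2 \;=\; 4 K_\Sigma \;+\; 2|\mathring B|^2 \;-\; 4\, K_M(T\Sigma),
\end{equation*}
combined with the Gauss--Bonnet identity $\int_\Sigma K_\Sigma\, d\mu = 4\pi$, which is valid on both $\Sigma_{\xi,r}$ and $S_{\tilde\xi,\tilde r}$ since both are topological spheres. The traceless second-fundamental-form contributions on either surface are $\mathcal{O}(r^{-4})$ and absorb into the error. What remains is to compare $\int K_M(T\Sigma)\, d\mu$ on the two surfaces, which I would carry out by a normal variation along $\tilde u \nu$. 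The Schwarzschild part of $K_M$ is essentially radial; integrated against the constant mode of $\tilde u$ given in Lemma \ref{decayutil} it produces the explicit $-64\pi\, r^{-3}$ term, and the metric deviation $\sigma$ contributes the $-4\, r^{-3}\!\int[\operatorname{tr}\sigma - \sigma(\nu,\nu)]\, d\mu_\delta$ correction. Pairing the nearly radial background against the non-constant modes of $\tilde u$ produces only $\mathcal{O}(r^{-3}|\xi|^2)+\mathcal{O}(r^{-2}|\xi|^4)+\mathcal{O}(r^{-4})$.

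For \eqref{comparp} I would use the first-variation identity \eqref{firstvary} along the interpolation $\Sigma_t = \Sigma_{\tilde\xi,\tilde r}(t\tilde u)$,
\begin{equation*}
\int_{\Sigma_{\xi,r}} P^2\, d\mu \;-\; \int_{S_{\tilde\xi,\tilde r}} P^2\, d\mu \;=\; \int_0^1 \!\!\int_{\Sigma_t} 2\, W_2\, \tilde u\, d\mu_t\, dt.
\end{equation*}
Each summand of $W_2$ involves $k$ together with either $\nabla k$, $H$, or a tangential divergence; under the assumed decay $k=\mathcal{O}(r^{-2})$, $\nabla k=\mathcal{O}(r^{-3})$ this forces $W_2=\mathcal{O}(r^{-5})$ uniformly on $\Sigma_t$. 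Combined with the smallness of $\tilde u$ from Lemma \ref{decayutil} and the area $\sim r^2$ of the spheres, and after isolating the explicit $\Lambda_0$-mode of $\tilde u$, one arrives at the stated $\mathcal{O}(r^{-4})$ bound. The once-differentiability in $\xi$ on both sides is then inherited from the $\xi$-differentiability in Lemma \ref{decayutil}, together with $\Bar D u = \mathcal{O}(r^{-1})$ from Proposition \ref{17EK}.

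The main obstacle is the bookkeeping in \eqref{intemean}: tracking the precise coefficients $-64\pi$ and $-4$ multiplying $r^{-3}$ requires a careful match between the constant mode of $\tilde u$ and the leading Schwarzschild contribution to $K_M$, together with a systematic check that the higher spherical harmonics of $\tilde u$ pair against this nearly radial background to produce only lower-order terms. The $P^2$ identity \eqref{comparp} is comparatively cheap, since every summand of $W_2$ carries two extra factors of $1/|x|$ relative to $W_1$, so the decay is generous and no explicit constants need to be tracked.
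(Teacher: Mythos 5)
Your route is genuinely different from the paper's: the paper does not re-derive \eqref{intemean} at all, but observes that the $H^2$ expansion is verbatim \cite[Lemma 14]{willcen}, since $k$ enters the construction of $u_{\xi,r}$ only through $W_2=\mathcal{O}(r^{-5})$ and therefore does not disturb the expansions of Lemma \ref{expansquafoli} and Lemma \ref{decayutil} at the recorded orders; \eqref{comparp} is then dispatched by the decay of $k$. Your Gauss--Bonnet reduction $H^2=4K_\Sigma+2|\mathring B|^2-4K_M(T\Sigma)$ is a legitimate alternative starting point, but as written it has two concrete gaps. First, the claim that the $|\mathring B|^2$ contributions are $\mathcal{O}(r^{-4})$ is false uniformly for $|\xi|<\Tilde{\delta}$: the non-constant part of $u$ consists of $\ell\ge 2$ harmonics of size $\mathcal{O}(|\xi|^2)$ (Lemma \ref{expansquafoli}), so $\mathring B$ on $\Sigma_{\xi,r}$ picks up terms of size $\mathcal{O}(|\xi|^2 r^{-2})$ and $\int_{\Sigma}|\mathring B|^2$ is only $\mathcal{O}(r^{-2}|\xi|^4)+\mathcal{O}(r^{-3}|\xi|^2)+\mathcal{O}(r^{-4})$ — still inside the stated error budget, but not for the reason you give. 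Second, and more seriously, "nearly radial" is not enough to control the pairing of $K_M$ against the non-constant modes of $\tilde u$: on the off-center sphere $S_{\tilde\xi,\tilde r}$ the radial function $|x|^{-3}$ is not nearly constant (its oscillation is of relative size $\mathcal{O}(|\xi|)$), and a naive estimate of $\int(\partial_\nu K_M+HK_M)\tilde u\,d\mu$ gives $\mathcal{O}(r^{-2}|\xi|^2)$, which exceeds the allowed $\mathcal{O}(r^{-3}|\xi|^2)+\mathcal{O}(r^{-2}|\xi|^4)$. What saves the day is that the $\mathcal{O}(|\xi|)$ oscillation of the background is an $\ell\le 1$ harmonic while the non-constant part of $\tilde u$ is purely $\ell\ge 2$, so the leading pairing vanishes by $L^2$-orthogonality; this mechanism, which is exactly the bookkeeping carried out in \cite{willcen}, is absent from your outline.

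For \eqref{comparp} your own chain of estimates does not give the stated bound: $W_2=\mathcal{O}(r^{-5})$, $|\tilde u|=\mathcal{O}(|\xi|^2)+\mathcal{O}(r^{-1})$ and area $\sim r^2$ yield $\mathcal{O}(r^{-3}|\xi|^2)+\mathcal{O}(r^{-4})$, so isolating the $\Lambda_0$-mode handles only the $\mathcal{O}(r^{-1})$ part of $\tilde u$; to reach $\mathcal{O}(r^{-4})$ uniformly you would again need to exploit the $\ell\ge2$ structure of the non-constant modes against the low harmonics of $W_2$, or restrict to the regime $\xi=\xi(r)=\mathcal{O}(r^{-1})$ in which the lemma is actually applied. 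In addition, your interpolation identity based on \eqref{firstvary} compares two integrals both taken with respect to $d\mu$, whereas the right-hand side of \eqref{comparp} carries $d\mu_\delta$; the conversion $\int_{S_{\tilde\xi,\tilde r}}P^2(d\mu-d\mu_\delta)$ is a priori of size $\mathcal{O}(r^{-3})$ (the conformal factor contributes at relative order $r^{-1}$) and is nowhere addressed in your argument. These are the points you would need to close before the proposal counts as a proof.
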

Note that (\ref{intemean}) is the same expression obtained in \cite[Lemma 14]{willcen} ($k$ doesn't play any role) and (\ref{comparp}) is obtained directly by using the decay of $k$. This shows that we can reuse many of the results of \cite{willcen} concerning the expression (\ref{intemean}).  We can then apply directly \cite[Proposition 18]{willcen} to obtain 
\begin{equation}\label{precenter}
    \begin{split}
       256 \pi r \xi(r) =& 2r^3 \int_{S_r(r \xi(r))} \mathrm{Sc} \, \nu\, d\mu_{\delta}-8 r \int_{S_r(0)}  \Bar{D} \operatorname{tr} \sigma -  \Bar{D}\sigma(\nu, \nu) -\frac{2}{r} \operatorname{tr} \sigma\, \nu \,d\mu_{\delta} \\
       &+ r^3\Bar{D} \int_{S_{\tilde{\xi},\tilde{r}}} P^2 \, d\mu_{\delta} + \mathcal{O}(r^{-1}). 
    \end{split}
\end{equation}
It was shown in \cite[Proposition 18]{willcen} that the second term of the previous expression is related to the Hamiltonian center of mass. To see this, note that the center of mass of a Schwarzschild manifold is given by the zero vector, now since our manifold is asymptotic to Schwarzschild, the center of mass will only depend on the perturbation term of the metric (the tensor $\sigma$).  Using the definition of the Hamiltonian center of mas  (\ref{eq:BOM_center}) and its linearity we have 
\[
z^\alpha := \frac{1}{32\pi\,r}
\int_{S_r(0)} 
\sum_{i,j=1}^3
\Big(
x^\alpha x^j \big[ (\partial_i \sigma)(e_i, e_j) - (\partial_j \sigma)(e_i, e_i) \big]
-
x^i \big[ \sigma(e_i, e^\alpha) - \delta_i^{\ \alpha}\,\sigma(e_i,e_i) \big]
\Big)
\, \mu_\delta,
\]
for $\alpha = 1,2,3$, and $\lim_{r \to \infty} z^\alpha = C_H^\alpha .$ Using integration by parts and the decomposition
\(e_\alpha = e_\alpha^{\perp} + e_\alpha^{\top}\)
with respect to \(\delta\),
we obtain
\begin{equation}
z^\alpha
=
\frac{1}{32\pi\,r}
\int_{S_r(0)}
\big(
(\partial_\alpha \sigma)(\nu, \nu)
- \partial_\alpha \tr \sigma
+\frac{2}{r} \, \tr\sigma \,\nu^\alpha
\big)
\, d\mu_\delta. 
\end{equation}
Then replacing this second term and the variation  $\Bar{D} \int_{S_{\tilde{\xi},\tilde{r}}} P^2 \, d\mu_{\delta}= \int_{S_{\tilde{\xi},\tilde{r}}}2 r W_2 \nu \, d\mu_{\delta}$ into (\ref{precenter}), we arrive at the following result which is  the dynamical version of  \cite[Corollary 19]{willcen}.
\begin{proposition}
Under the assumptions of before,    there holds, as \(r \to \infty\),
\begin{equation}\label{lambdaxi}
r \xi(r) = C_H + \frac{1}{128 \pi} r^3 \int_{S_r(r \xi(r))} \mathrm{Sc} \, \nu +r W_2 \nu \, d\mu_{\delta} + \mathcal{O}(r^{-1}).
\end{equation}
\end{proposition}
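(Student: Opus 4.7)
The plan is to begin from the identity established in the preceding proposition and handle its three right-hand terms separately. The first term $2r^3 \int \mathrm{Sc}\, \nu\, d\mu_\delta$ already appears on the right-hand side of the target formula and needs no manipulation. The second term, which depends only on the asymptotic deviation $\sigma$ of the metric from Schwarzschild, is exactly the expression that Eichmair--Koerber identify with $256\pi\,C_H$ modulo $\mathcal{O}(r^{-1})$ in \cite[Proposition 18]{willcen}. The parity hypothesis (\ref{paritycond}) transfers through the Schwarzschild expansion to a parity condition on $\sigma$, so the computation from \cite{willcen} applies verbatim and I would simply quote it.

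The third term is the genuinely new ingredient and requires computing $\Bar{D} \int_{S_{\tilde{\xi}, \tilde{r}}} P^2\, d\mu_\delta$. Under $\xi \mapsto \xi + \varepsilon a$ the sphere $S_{\tilde{\xi}, \tilde{r}}$ translates by $r a$ in Euclidean coordinates (its center is $r\xi = \tilde{r}\tilde{\xi}$), so infinitesimally this variation has normal component $\alpha = r\,\delta(a,\nu)$. Applying the first variation formula (\ref{firstvary}) gives, as a vector equality,
\[
\Bar{D} \int_{S_{\tilde{\xi}, \tilde{r}}} P^2\, d\mu = 2 r \int_{S_{\tilde{\xi}, \tilde{r}}} W_2\, \nu\, d\mu + (\text{lower order}),
\]
so after multiplication by $r^3$ this contributes $2r^4 \int_{S_{\tilde{\xi},\tilde{r}}} W_2\,\nu\, d\mu + \mathcal{O}(r^{-1})$. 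Replacing $S_{\tilde{\xi}, \tilde{r}}$ by $S_r(r\xi(r))$ and $d\mu$ by $d\mu_\delta$ introduces only an $\mathcal{O}(r^{-1})$ error, in view of Lemma \ref{decayutil}, the decay (\ref{assumpi}) of $k$, and the sharpened bound $\xi(r) = \mathcal{O}(r^{-1})$ from Lemma \ref{extixi}.

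Collecting the three contributions and dividing by $256\pi$ yields the claimed formula. The main source of technical difficulty will be bookkeeping the discrepancy between the first variation formula (\ref{firstvary})---stated with respect to the induced $g$-measure on $\Sigma$ and with $\alpha$ the normal amplitude in $(M,g)$---and the Euclidean measure $d\mu_\delta$ on the coordinate sphere $S_{\tilde{\xi},\tilde{r}}$ paired with Euclidean normal inner products. One must verify that this discrepancy, together with the Taylor corrections produced by the Lyapunov--Schmidt perturbation $\tilde{u}$, only contributes at order $\mathcal{O}(r^{-1})$ after multiplication by $r^3$; this is where the strengthened decay of $k$ and the estimate on $\tilde{u}$ in Lemma \ref{decayutil} are used.
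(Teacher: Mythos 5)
Your proposal follows essentially the same route as the paper: it starts from the displayed identity $256\pi\,r\xi(r)=2r^3\int \mathrm{Sc}\,\nu\,d\mu_\delta+(\sigma\text{-term})+r^3\Bar{D}\int_{S_{\tilde\xi,\tilde r}}P^2\,d\mu_\delta+\mathcal{O}(r^{-1})$, quotes \cite[Proposition 18]{willcen} to identify the $\sigma$-term with $256\pi C_H$, and converts the $P^2$-variation via the first-variation formula \eqref{firstvary} with $\alpha=r\,\delta(a,\nu)$ into the $W_2\,\nu$ integral, absorbing the measure/normal and sphere-replacement discrepancies into $\mathcal{O}(r^{-1})$, exactly as the paper does. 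One small correction: the parity hypotheses \eqref{paritycond} concern $\mathrm{Sc}$ and $k$, not $\sigma$, and their role (as in \cite{willcen}) is to guarantee convergence of $C_H$ and boundedness of the $W_2$-term, not to endow $\sigma$ with a parity; this does not affect the validity of your argument.
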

Now since the surfaces $ \Sigma_{\xi(r), r}$ are graphs of $\tilde{u}= \mathcal{O}(r^{-1}) $ over  $S_{\tilde{r}}(\tilde{r} \tilde{\xi}(r))$ and $ \tilde{r} \tilde{\xi}=r \xi $ then it holds
\begin{equation}   
|\Sigma_{\xi(r), r}|^{-1} \int_{\Sigma_{\xi(r), r}} x^\ell \, d\mu =|S_{\tilde{r}}(\tilde{r} \tilde{\xi}(r))|^{-1} \int_{S_{\tilde{r}}(\tilde{r} \tilde{\xi}(r))} x^\ell \, d\mu +\mathcal{O}(r^{-1})= r \xi(r)^\ell + \mathcal{O}(r^{-1}).
\end{equation}
Then, putting everything together, we obtain the following result. 
\begin{theorem}\label{centerfolia}
   Let $(M,g,k)$ be $ \mathcal{C}^4$-asymptotic to Schwarzschild with mass $m>0$ satisfying the conditions of Theorem \ref{exisfoli}.  Furthermore, assume that 
   \begin{equation}
   |\mathrm{Sc}^{\text{odd}} |=\mathcal{O} ( |x|^{-5}), \quad  |k^{\text{even}}| + |x||(\nabla k)^{\text{odd}}| = \mathcal{O} (|x|^{-3})
\end{equation}
and 
\begin{equation}
    x^i \partial_i (|x|^2 \mathrm{Sc}) \leq \mathcal{O}( |x|^{-3}).
\end{equation}
 Then there exists an on-center foliation by Hawking surfaces, and its center is given by 
\begin{equation}\label{centerhawking}
    C_{f}= C_H+ \lim_{r \to \infty}\frac{1}{128 \pi} r^3 \int_{S_r(r \xi(r))}  \mathrm{Sc}\, \nu +r W_2 \nu \, d\mu_{\delta},
\end{equation}
 provided the limit converges, where $W_2= P( \nabla_\nu \tr k - \nabla_\nu k(\nu,\nu )) - 2 \diver_\Sigma (P k(\cdot, \nu))  +\frac{1}{2}H P^2 $ and $C_H$ is the Hamiltonian center of mass.
\end{theorem}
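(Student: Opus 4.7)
The plan is to combine the existence of the foliation, the improved decay of $\xi(r)$, and the asymptotic identity \eqref{lambdaxi} already derived in this section. First, I would verify that the parity and derivative hypotheses assumed here imply those of Theorem \ref{exisfoli} for sufficiently small $\eta_i$, so that the foliation $\{\Sigma_{\xi(r),r}\}_{r>r_0}$ exists as constructed there. The crucial new input beyond existence is Lemma \ref{extixi}, which gives $\xi(r)=\mathcal{O}(r^{-1})$; without this sharper bound, the coordinate center of $\Sigma_{\xi(r),r}$ could drift at rate $r|\xi(r)|$ and fail to converge.

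Next, I would express the Euclidean coordinate center of each leaf in terms of $r\xi(r)$. Since $\Sigma_{\xi(r),r}$ is by construction the Euclidean graph of $\tilde{u}=\tilde{u}_{\xi(r),r}$ over $S_{\tilde{\xi},\tilde{r}}$, and Lemma \ref{decayutil} together with $|\xi(r)|^{2}=\mathcal{O}(r^{-2})$ gives $\tilde{u}=\mathcal{O}(r^{-1})$, a direct change of variables yields
\[
|\Sigma_{\xi(r),r}|^{-1}\int_{\Sigma_{\xi(r),r}} x^\ell\,d\mu \;=\; r\,\xi(r)^\ell + \mathcal{O}(r^{-1}),
\]
using $\tilde{r}\tilde{\xi}(r)=r\xi(r)$, as recorded just before the statement. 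The discrepancy between the induced and Euclidean area elements contributes only at order $r^{-1}$ and is absorbed in the error.

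Finally, I would substitute the identity \eqref{lambdaxi},
\[
r\xi(r) \;=\; C_H + \frac{1}{128\pi}\,r^{3}\int_{S_r(r\xi(r))} \bigl(\mathrm{Sc}\,\nu + r\,W_2\,\nu\bigr)\,d\mu_\delta + \mathcal{O}(r^{-1}),
\]
into the previous display and pass to the limit $r\to\infty$. Under the standing hypothesis that the integral limit exists, this yields exactly the claimed formula \eqref{centerhawking} for $C_f$.

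The main obstacle is concentrated in the derivation of \eqref{lambdaxi}, not in this final assembly. Relative to the Willmore center-of-mass argument of \cite{willcen}, the genuinely new piece is the control of $\Bar{D}G_{r,3}$ arising from $\int\pi(\nu,\nu)^{2}\,d\mu_\delta$: one Taylor-expands around $\xi=0$ and invokes the parity conditions on $k$ and $\nabla k$ in \eqref{paritycond} so that the leading contribution is precisely the $W_2$-term appearing in \eqref{centerhawking}, while the remainder is swallowed by $\mathcal{O}(r^{-1})$. The identification of the scalar-curvature integral with $C_H$ then proceeds verbatim as in \cite[Proposition 18]{willcen}, whose proof depends only on metric-side structure that is unchanged in the initial-data setting.
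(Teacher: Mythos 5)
Your proposal follows essentially the same route as the paper's proof: existence of the foliation under the stated hypotheses, Lemma \ref{extixi} to get $\xi(r)=\mathcal{O}(r^{-1})$, Lemma \ref{decayutil} and the graph representation over $S_{\tilde{r}}(\tilde{r}\tilde{\xi}(r))$ to identify the Euclidean center of each leaf with $r\xi(r)+\mathcal{O}(r^{-1})$, and then substitution of \eqref{lambdaxi} and passage to the limit. One minor slip in your narration: in the derivation of \eqref{lambdaxi} it is the metric-decay ($\sigma$) terms, not the scalar-curvature integral (which survives explicitly in \eqref{centerhawking}), that \cite{willcen} converts into $C_H$; this does not affect the validity of your argument.
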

\begin{remark}
Under the stronger decay and parity hypotheses (and in the time-symmetric case), Theorem~\ref{willcenmain} shows that the center of the Willmore foliation agrees with the Hamiltonian center of mass. In general, the Willmore center is sensitive to the asymptotic \emph{distribution} of the scalar curvature and may differ from the Hamiltonian center unless additional strong asymptotic symmetry is imposed; this is evident from \eqref{lambdaxi} with \(k=0\) and was also observed by Eichmair–Koerber \cite{willcen}. 

In the genuinely dynamical case, an analogous phenomenon occurs with the second fundamental form \(k\): the center defined in \eqref{centerhawking} will \emph{not} in general coincide with the STCMC center. Although the \(k\)-dependent correction in \(C_{\mathrm{STCMC}}\) \eqref{correctstcmc} vanishes under our assumptions, our center retains the \(k\)-dependent term
\[
\lim_{r\to\infty}\frac{r^{4}}{128\pi}\int_{S_r(r\,\xi(r))} W_2\, \nu \, d\mu_\delta,
\]
which need not vanish. Hence \(C_f \neq C_{\mathrm{STCMC}}\) except in special symmetric situations. In short: just as the Willmore center is more sensitive to the scalar curvature than the CMC center, the Hawking foliation is more sensitive to the asymptotic distributions of both \(\mathrm{Sc}\) and \(k\) than the STCMC foliation.
\end{remark}

Going back to (\ref{lambdaxi}), note that we can express the integral of the expression as centered integrals by using a Taylor expansion,  first note that the expression can be written as 
$$r \delta(  \xi(r), e_i ) = \delta(  C_H, e_i ) + \frac{1}{128 \pi} r^3 \int_{S_r(r \xi(r))} \mathrm{Sc} \,\delta( \nu, e_i )  +r W_2 \delta(  \nu,  e_i )  \, d\mu_{\delta} + \mathcal{O}(r^{-1}). $$
for $i=1,2,3$ and where $e_i$ are  coordinate vectors. Here the challenge is how to deal with the integrals since they are centered at $r \xi$. One direct way would be to consider a Taylor expansion at a certain point $q$. We also assume that the parity conditions are centered at $q$.

Now let's see the second term which is equivalent to the  expansion done in (\ref{taylorexp}) and where we denote   $ \alpha=  \delta( e_i, \nu )$ 
\begin{equation*}
    \begin{split}
        \frac{r^4}{128 \pi}  \int_{S_r(r \xi(r))} W_2 \delta( e_i, \nu ) \, d\mu_{\delta}&=    \frac{r^4}{128 \pi}  \int_{S_r(q)}  W_2 \delta( e_i, \nu )  \, d\mu_{\delta}  \\
        &+\frac{r^4}{128 \pi}  \int_{S_{r}(q)}   2 \mathcal{D}W_2\left( \delta( e_i, \nu ),  \delta( r\xi-q, \nu ) \right) d\mu_{\delta} \\
 & + \frac{r^4}{ 256 \pi}  \Bar{D}^2 \left(\int_{S_{r}(\xi r)}  2 W_2 \delta( e_i, \nu ) d\mu_{\delta}\right)_{|\xi = \Tilde{\xi}}(r \xi-q, r \xi-q)
    \end{split}
\end{equation*}
Where $\Tilde{\xi}$ is  some intermediate point on the straight‐line segment from  $|q|$ to $  \xi r $. Now as in Lemma \ref{extixi} we see that the  third term decays as $\mathcal{O}(|\xi|)= \mathcal{O}(r^{-1})$ obtaining 
\begin{equation*}
\begin{split}
     \frac{r^4}{128 \pi}  \int_{S_r(r \xi(r))} W_2 \delta( e_i, \nu ) \, d\mu_{\delta} =&   \frac{r^4}{128 \pi}  \int_{S_r(q)}  W_2 \delta( e_i, \nu )  \, d\mu_{\delta} \\ &+\frac{r^4}{128 \pi}  \int_{S_{r}(q)}   2 \mathcal{D}W_2\left( \delta( e_i, \nu ),  \delta( r\xi-q, \nu ) \right) d\mu_{\delta} +\mathcal{O}(r^{-1})
     \end{split}
\end{equation*}
also note that by the parity conditions on $k$, this expression is bounded.  Now, using again a Taylor expansion for the center of the integrals, we have 
\begin{equation}
    \begin{split}
        &\frac{r^3}{128 \pi}  \int_{S_r(r \xi(r))} \mathrm{Sc} \, \delta( e_i, \nu )   \, d\mu_{\delta} =\frac{r^3}{128 \pi}  \int_{S_r(q)} \mathrm{Sc}\, \delta( e_i, \nu )   \, d\mu_{\delta} \\
        &\quad+ \frac{r^3}{128 \pi}  \int_{S_r(q)}  \Big(  \delta( e_i, \nu )  \delta(  r\xi-q, \nu ) \Bar{D}_{\nu} \mathrm{Sc}+ 3 \delta( e_i, \nu ) \delta(  r\xi-q, \nu ) \frac{\mathrm{Sc}}{r}  - \frac{\mathrm{Sc}}{r} \, \delta( e_i,r\xi-q ) \Big) d\mu_{\delta}\\
        &\quad + \frac{r^3}{256 \pi}  \Bar{D}^2 \left(\int_{S_{r}(\xi r)}  \mathrm{Sc}\,  \delta( e_i, \nu )   \, d\mu_{\delta}\right)_{|\xi = \Tilde{\xi}}(r \xi-q, r \xi-q)\\
        &=    \frac{r^3}{128 \pi}  \int_{S_r(q)}  \Big(  \delta( e_i, \nu )  \delta(  r\xi-q, \nu ) \Bar{D}_{\nu} \mathrm{Sc}+ 3 \delta( e_i, \nu ) \delta(  r\xi-q, \nu ) \frac{\mathrm{Sc}}{r}  - \frac{\mathrm{Sc}}{r} \, \delta( e_i,r\xi-q ) \Big)  d\mu_{\delta}\\
        &\quad + \frac{r^3}{128 \pi}  \int_{S_r(q)} \mathrm{Sc}\, \delta( e_i, \nu )   \, d\mu_{\delta} + \mathcal{O}(r^{-1})
    \end{split}
\end{equation}
Note that by the parity conditions (which we assume centered at $q$) on $\mathrm{Sc}$, this term is bounded. 
Putting everything together, we have the following expression for the center. 
\begin{equation}
    \begin{split}
        r \delta(  \xi(r&), e_i ) =  \delta(  C_H, e_i )+ \frac{r^3}{128 \pi}  \int_{S_r(q)} \mathrm{Sc} \, \delta( e_i, \nu )   \, d\mu_{\delta} \\
        &+\frac{r^3}{128 \pi}  \int_{S_r(q)}  \Big(  \delta( e_i, \nu )  \delta(  r\xi-q, \nu ) \Bar{D}_{\nu} \mathrm{Sc}+ 3 \delta( e_i, \nu ) \delta(  r\xi-q, \nu ) \frac{\mathrm{Sc}}{r}  - \frac{\mathrm{Sc}}{r} \, \delta( e_i,r\xi-q ) \Big)  d\mu_{\delta} \\
       &+ \frac{r^4}{128 \pi}  \int_{S_r(q)}  W_2 \delta( e_i, \nu )  \, d\mu_{\delta} +\frac{r^4}{128 \pi}  \int_{S_{r}(q)}   2 \mathcal{D}W_2\left( \delta( e_i, \nu ), \delta( r\xi-q, \nu ) \right) d\mu_{\delta} + \mathcal{O}(r^{-1})
    \end{split}
\end{equation}
where the last term is defined in (\ref{secondvary}). Note that by the parity conditions, the first and third terms are bounded, and by the decay conditions the second and fourth terms are bounded. Then if the right-hand side of the equality converges we have that the center of the foliation converges.

With this construction, we can obtain more explicit ways to calculate the center of the foliation, without the necessity to know the vectors $\xi(r)$ of the foliation.
\begin{corollary}
\label{corocenter1}     Let $(M,g,k)$ be $ \mathcal{C}^4$-asymptotic to Schwarzschild with mass $m>0$ satifying the condition of Theorem \ref{exisfoli}. Furthermore,  suppose that there is a point $q \in \mathbb{R}^3$ which satisfies. 
\begin{equation}\label{strongdec}
     x^i \partial_i (| x|^2 \mathrm{Sc})(x) = \mathcal{O}( | x|^{-3}),
\end{equation}
\begin{equation}\label{strongdecpi}
\pi (\Tilde{\rho}, \Tilde{\rho}) = o( | \Tilde{x}|^{-2}) , \quad (\nabla_{\Tilde{\rho}} \pi) (\Tilde{\rho}, \Tilde{\rho})= o(| \Tilde{x}|^{-3}) , \quad k(\cdot, \Tilde{\rho}) = o(| \Tilde{x}|^{-2}),
\end{equation}
\begin{equation}
    |k^{\text{even}}|(\Tilde{x}) + |\Tilde{x}||(\nabla k)^{\text{odd}}|(\Tilde{x}) = \mathcal{O}(|\Tilde{x}|^{-3}).
\end{equation}
Where $ \Tilde{x}= x-q$, $\Tilde{\rho}= \frac{\Tilde{x}}{|\Tilde{x}|}$. Also, suppose that the following limit converges  
\begin{equation}
    \lim_{r \to \infty}\frac{r^3}{128 \pi} \int_{S_r(q)} \mathrm{Sc} \nu^j + r \pi(\nu, \nu)   \nabla_{\nu}\pi(\nu,\nu) \nu^j +2  \pi(\nu, \nu)    k_i^j \nu^i 
   +    \pi(\nu, \nu)^2 \nu^j d\mu_{\delta} = Z^j
\end{equation}
Then there exists a foliation of Hawking surfaces, whose center is given by $C_H+Z$, where $C_H$ is the Hamiltonian center of mass.
\end{corollary}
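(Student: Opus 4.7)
The plan is to start from Theorem \ref{centerfolia}, which expresses
\[
C_f - C_H = \lim_{r\to\infty}\frac{1}{128\pi}r^{3}\!\int_{S_r(r\xi(r))}\!\big(\mathrm{Sc}\,\nu + r\,W_2\,\nu\big)\,d\mu_\delta,
\]
and to shift each integral from the moving sphere $S_r(r\xi(r))$ to the fixed sphere $S_r(q)$ via a Taylor expansion in the center parameter. This shift has already been performed in the long displayed identity immediately preceding the corollary; the task is to verify that, under the present stronger hypotheses, the two first-order Taylor corrections vanish in the limit, the second-order Taylor remainder is $\mathcal{O}(r^{-1})$, and the leading $W_2$ term reduces to the explicit integrand of the hypothesized limit $Z^{j}$.

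First I would invoke Lemma \ref{extixi} to conclude $\xi(r)=\mathcal{O}(r^{-1})$, so $r\xi(r)-q=\mathcal{O}(1)$; consequently the second-order remainder, which is quadratic in $r\xi-q$ and second-derivative in the center, contributes $\mathcal{O}(r^{-1})$ after the $r^{3}$ or $r^{4}$ weight, and disappears in the limit. For the first-order scalar-curvature correction, the hypothesis $x^{i}\partial_{i}(|x|^{2}\mathrm{Sc})=\mathcal{O}(|x|^{-3})$ is exactly the decay used in \cite[Lemma 11]{willcen}; a radial integration by parts, identical to the one used to control $\bar D_\xi G_{r,2}$ in the proof of Lemma \ref{extixi} above, bounds the whole correction by $\mathcal{O}(r^{-1})$.

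The delicate step is the first-order $W_2$ correction
\[
\frac{r^{4}}{128\pi}\!\int_{S_r(q)}\!2\,DW_2\bigl(\delta(e_i,\nu),\ \delta(r\xi-q,\nu)\bigr)\,d\mu_\delta.
\]
Inspecting the mixed second variation \eqref{secondvary} summand by summand on $S_r(q)$, where $\nu=\tilde\rho$, every term is a product that contains at least one factor of $P=-\pi(\tilde\rho,\tilde\rho)$, one factor of $\nabla\pi$ or $\nabla k$, and only bounded ambient factors. The refined decay $\pi(\tilde\rho,\tilde\rho)=o(|\tilde x|^{-2})$, $(\nabla_{\tilde\rho}\pi)(\tilde\rho,\tilde\rho)=o(|\tilde x|^{-3})$, and $k(\cdot,\tilde\rho)=o(|\tilde x|^{-2})$, combined with the ambient $\mathcal{O}(|x|^{-3})$ bound on $\nabla k$ from Definition \ref{asymptoticsc} and the parity hypothesis on $k^{\text{even}}$ and $(\nabla k)^{\text{odd}}$, give a pointwise bound on $DW_2(\alpha,\tilde\alpha)$ strong enough that, against the $r^{4}$ prefactor, the $\mathcal{O}(1)$ size of $r\xi-q$, and the $\mathcal{O}(r^{2})$ surface area of $S_r(q)$, the whole correction is $o(1)$ and vanishes in the limit.

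Finally, I would apply the first-variation identity \eqref{firstvary} with $\alpha=\delta(e_i,\nu)$ and $P=-\pi(\nu,\nu)$ to rewrite $\frac{r^{4}}{128\pi}\int_{S_r(q)}W_2\,\delta(e_i,\nu)\,d\mu_\delta$ in precisely the form that appears inside the definition of $Z^{j}$; together with the leading scalar-curvature integral $\frac{r^{3}}{128\pi}\int_{S_r(q)}\mathrm{Sc}\,\delta(e_i,\nu)\,d\mu_\delta$ and the assumed convergence to $Z^{j}$, this yields $C_f=C_H+Z$. The main obstacle I anticipate is exactly the $W_2$ step: one must check term by term that every summand of \eqref{secondvary} factors through the three specific radial tensor quantities that the $o$-hypotheses control, since tangential gradients of $\alpha=\delta(e_i,\nu)$ and $\tilde\alpha=\delta(r\xi-q,\nu)$ threaten to pair with components of $k$ or $\nabla k$ not directly restricted by the radial hypotheses, and the identity $\nu=\tilde\rho$ on $S_r(q)$ together with the parity conditions must be exploited to reduce each such pairing to a controlled quantity.
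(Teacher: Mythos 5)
Your proposal follows essentially the same route as the paper's proof: obtain $\xi(r)=\mathcal{O}(r^{-1})$ (the paper re-derives the Lemma \ref{extixi} estimate with the Taylor expansion recentered at $q$, which your direct invocation amounts to once one notes that the parity hypotheses at $q$ transfer to the origin via the ambient decay of $\nabla k$ and $\nabla^2 k$), then control the scalar-curvature and $DW_2$ correction terms in the recentered expansion using the strengthened decay $\mathrm{Sc}=\mathcal{O}(|x|^{-5})$ and the radial $o$-hypotheses, and finally rewrite the leading integrals via \eqref{firstvary} to match $Z$. Your only imprecision — claiming every summand of \eqref{secondvary} carries both a factor of $P$ and a factor of $\nabla\pi$ or $\nabla k$ — is harmless, since the term-by-term check you yourself flag shows each summand is a product of at least one of the three controlled radial quantities with ambient-decay factors, which is exactly how the paper argues.
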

\begin{proof}
Note that since $q$ is just finite vector in $\mathbb{R}^3 $,   $\mathcal{O}(|\Tilde{x}|^{-\alpha})= \mathcal{O}(|x|^{-\alpha})$ for any $\alpha>0$ and that   $x^i \partial_i (| x|^2 \mathrm{Sc})(x) = \mathcal{O}( | x|^{-3})$ can be integrated to get $\mathrm{Sc} = \mathcal{O}( | x|^{-5})$ ( $ |x|^2 \mathrm{Sc} = - \int_{|x|}^\infty  \frac{\partial}{\partial s} (s^2 \mathrm{Sc}) ds = - \int_{|x|}^\infty  \mathcal{O}(\eta_1 s^{-4})  ds = \mathcal{O}(\eta_1|x|^{-3})$ ), this implies in particular that 
\begin{equation}\label{extraesti}
        |\mathrm{Sc}( x)| +|x | |\nabla_{\rho }\mathrm{Sc}|  = \mathcal{O}(|x|^{-5}).
    \end{equation}
To see that $\xi = O(r^{-1})$, we proceed as in Lemma \ref{extixi}. Note that the condition (\ref{extraesti}) implies the assumptions on $\mathrm{Sc}$ considered in the lemma, therefore proceeding as its proof we obtain 
\begin{equation}
         |\xi|^{-1} \xi^i (\partial_i G_{r,2})(\xi) =  - \mathcal{O} (r^{-1}).
\end{equation}
 We also have 
\begin{equation}
\begin{split}
  |\xi|^{-1} \xi^i (\partial_i G_{r,3})(\xi) =& -r^2 \int_{S_{r}(q)}   2 W_2 \alpha d\mu_\delta -  r^2\int_{S_{r}(q)}   2 \mathcal{D}W_2\left(r \delta( \frac{\xi}{|\xi|}, \nu ),  \delta( r\xi-q, \nu ) \right) d\mu_\delta \\
 &- \frac{r^2}{2} \Bar{D}^2 \left(\int_{S_{r}(\xi r)}  2 W_2 \alpha d\mu_\delta\right)_{|\xi = \Tilde{\xi}}(r \xi-q, r \xi-q).\\
 =&  \mathcal{O}(r^{-1}) + o(r^{-1}). 
 \end{split}
\end{equation}
Where the fist term is estimated by $ \mathcal{O}(r^{-1})$ because of parity and the second term by $ o(r^{-1})$, since it depends on $\pi (\Tilde{\rho},\Tilde{\rho})$,  $\nabla_{\Tilde{\rho}} \pi (\Tilde{\rho}$, $ \Tilde{\rho})$ and  $k(\cdot, \Tilde{\rho}) $, and where $\Tilde{\xi}$ is a point such that $\Tilde{\xi}r$ lays in  between $\xi r$ and $q$. Then as in Lemma \ref{extixi} we have $\xi = O(r^{-1})$. 

Now proceeding as before  and using that $\frac{r^3}{128 \pi}  \int_{S_{r}(q)}   2 \mathcal{D}W_2\left( r \delta( \frac{\xi}{|\xi|}, \nu ),  \delta( r\xi-q, \nu )\right) d\mu_{\delta}=o(1) $ we have 
\begin{equation}
    \begin{split}
        r \delta(  \xi(r),& e_i ) =  \delta(  C_H, e_i )+ \frac{r^3}{128 \pi}  \int_{S_r(q)} \mathrm{Sc} \, \delta( e_i, \nu ) + r \pi(\nu, \nu)   \nabla_{\nu}\pi(\nu,\nu) \delta( \nu, e_i) +2  \pi(\nu, \nu)    k (\nu, e_i) \\
           &+    \pi(\nu, \nu)^2 \delta( \nu, e_i)   \, d\mu_{\delta} + \frac{r^3}{128 \pi}  \int_{S_r(q)}  \Big(  \delta( e_i, \nu )  \delta(  r \xi -q, \nu ) \Bar{D}_{\nu} \mathrm{Sc} +3 \delta( e_i, \nu ) \delta(    r \xi -q, \nu )  \frac{\mathrm{Sc} }{r} \\
        & -\frac{\mathrm{Sc} }{r} \, \delta( e_i, r \xi -q ) \Big) d\mu_{\delta} +\frac{r^4}{128 \pi}  \int_{S_{r}(q)}   2 \mathcal{D}W_2\left( \delta( e_i, \nu ), \delta(    r \xi -q, \nu )  \right) d\mu_{\delta} + \mathcal{O}(r^{-1}) \\
       &\, \textcolor{white}{11} = \delta(  C_H +Z, e_i )+ \mathcal{O}(r^{-1}) + o(1)
    \end{split}
\end{equation}
Then we have 
\begin{equation}
  r \xi(r) = C_H + Z +o(1)
\end{equation}
\end{proof}
\begin{remark}
    As mentioned in the proof, the assumption $x^i \partial_i (| x|^2 \mathrm{Sc})(x) = \mathcal{O}( | x|^{-3})$ is equivalent to  $|\mathrm{Sc}( x)| +|x | |\nabla_{\rho }\mathrm{Sc}|  = \mathcal{O}(|x|^{-5})$. Also note that if we furthermore assume  $ |k^{\text{even}}|(\Tilde{x}) + |\Tilde{x}||(\nabla k)^{\text{odd}}|(\Tilde{x}) = o(|\Tilde{x}|^{-3}) $ and $ |\mathrm{Sc}^{\text{odd}} |(\Tilde{x})=o ( |\Tilde{x}|^{-5})$ then $Z=0$ and the center converges to the  Hamiltonian center of mass. 
\end{remark}
If we don't consider the assumptions on $\pi$ we obtain.
\begin{corollary}\label{corocenter2}
     Let $(M,g,k)$ be $ \mathcal{C}^4$-asymptotic to Schwarzschild with mass $m>0$ satisfying the condition of Theorem \ref{exisfoli}. Furthermore,  suppose that there is a point  $q \in \mathbb{R}^3$ which satisfies. 
\begin{equation}\label{strongdec2}
     x^i \partial_i (| x|^2 \mathrm{Sc})(x) = \mathcal{O}( | x|^{-3}),
\end{equation}
\begin{equation}
    |k^{\text{even}}|(\Tilde{x}) + |\Tilde{x}||(\nabla k)^{\text{odd}}|(\Tilde{x}) = o(|\Tilde{x}|^{-3}).
\end{equation}
Where $ \Tilde{x}= x-q$. Also, suppose that the following limit converges  
\begin{equation}
    \lim_{r \to \infty}\frac{r^3}{128 \pi} \int_{S_r(q)} \mathrm{Sc} \, \nu^j +   2r \mathcal{D}W_2\left( \delta( e_j, \nu ),  \delta( r\xi-q, \nu ) \right) d\mu_{\delta}  = Z^j
\end{equation}
Then there exists a foliation of Hawking surfaces, whose  center is given by $C_H+Z$, where $C_H$ is the Hamiltonian center of mass. 
\end{corollary}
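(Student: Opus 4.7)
The plan is to follow the structure of the proof of Corollary \ref{corocenter1} with one structural rearrangement: since the radial decay hypotheses on $\pi$ and on $k(\cdot,\tilde{\rho})$ are dropped, the $DW_2$ correction term that was shown to be $o(1)$ in Corollary \ref{corocenter1} no longer decays and must be absorbed into the definition of $Z$; conversely, the bare $W_2$-integral, which appeared as part of $Z$ before, must now be shown to vanish in the limit by parity. The argument breaks into three steps: (a) re-establish $\xi(r)=\mathcal{O}(r^{-1})$; (b) Taylor-expand the master identity $(\ref{lambdaxi})$ around the fixed point $q$ to produce four surface integrals $I_1,\dots,I_4$; and (c) show $I_2,I_3=o(1)$ so that $I_1+I_4\to Z$.

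For step (a), I adapt Lemma \ref{extixi} to the weaker hypotheses. The bound $x^i\partial_i(|x|^2\mathrm{Sc})=\mathcal{O}(|x|^{-3})$ integrates, as in $(\ref{estilow})$, to $|\mathrm{Sc}|=\mathcal{O}(|x|^{-5})$; combined with the stated parity this yields $|\xi|^{-1}\xi^i(\partial_i G_{r,2})(\xi)\geq -\mathcal{O}(r^{-1})$ by the argument of \cite[Lemma 11]{willcen}. For $G_{r,3}$ I use the Taylor expansion $(\ref{taylorexp})$ at $q$: the centered term $\int_{S_r(q)} 2W_2\alpha\,d\mu_\delta$ is $\mathcal{O}(r^{-1})$ by the parity of $k$ (the same mechanism that will control $I_3$ below), and the two $DW_2$ correction pieces are each $\mathcal{O}(|\xi|)$ by the bounds of $(\ref{estitaylor})$. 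Combining with the strict convexity of $G_1$ near the origin then gives $|\xi(r)|=\mathcal{O}(r^{-1})$.

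For step (b), starting from Theorem \ref{centerfolia}, equation $(\ref{lambdaxi})$, I Taylor-expand the integrals over $S_r(r\xi(r))$ around $q$ exactly as in the paragraphs preceding Corollary \ref{corocenter1}. This produces
\[
r\delta(\xi(r),e_j)=\delta(C_H,e_j)+I_1+I_2+I_3+I_4+\mathcal{O}(r^{-1}),
\]
where $I_1=\tfrac{r^3}{128\pi}\int_{S_r(q)}\mathrm{Sc}\,\nu^j\,d\mu_\delta$, $I_2$ collects the $\Bar{D}_\nu\mathrm{Sc}$ and $\mathrm{Sc}/r$ correction terms, $I_3=\tfrac{r^4}{128\pi}\int_{S_r(q)}W_2\,\nu^j\,d\mu_\delta$, and $I_4=\tfrac{r^4}{128\pi}\int_{S_r(q)}2\,DW_2(\delta(e_j,\nu),\delta(r\xi-q,\nu))\,d\mu_\delta$. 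By the definition of $Z^j$ in the statement, $I_1+I_4\to Z^j$, and the discussion at the end of Section \ref{sectioncenter} identifies the coordinate center of the foliation with $r\xi(r)+\mathcal{O}(r^{-1})$. The claim therefore reduces to proving $I_2,I_3=o(1)$.

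Step (c) is the principal task. That $I_2=\mathcal{O}(r^{-1})$ is immediate from $|\mathrm{Sc}|=\mathcal{O}(|x|^{-5})$ together with the boundedness of $r\xi-q$. For $I_3$, I perform a careful parity analysis in $\tilde{x}=x-q$: under the hypotheses $|k^{\text{even}}|=o(|\tilde{x}|^{-3})$ and $|(\nabla k)^{\text{odd}}|=o(|\tilde{x}|^{-4})$, the leading part of $k$ is odd at order $|\tilde{x}|^{-2}$ while the leading part of $\nabla k$ is even at order $|\tilde{x}|^{-3}$. Working term by term through $W_2=P\,\nabla_\nu\pi(\nu,\nu)-2\diver_\Sigma(P\,k(\cdot,\nu))+\tfrac12 HP^2$, each summand is a product whose leading $|\tilde{x}|^{-5}$ piece is \emph{even} in $\tilde{x}$, while the leading odd piece is $o(|\tilde{x}|^{-6})$. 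Pairing with the odd factor $\nu^j$ annihilates the even part exactly by the $\tilde{x}\to-\tilde{x}$ symmetry of $S_r(q)$, and the odd-odd contribution is of size $o(r^{-6})\cdot r^2=o(r^{-4})$; multiplication by $r^4/128\pi$ gives $I_3=o(1)$. The main obstacle is precisely this parity bookkeeping: one must carry the strict $o$-rather-than-$\mathcal{O}$ improvement through each product and derivative in order to conclude that the odd part of $W_2$ decays strictly faster than $|\tilde{x}|^{-5}$; without this strict gain one would only obtain $I_3=\mathcal{O}(1)$, which is insufficient.
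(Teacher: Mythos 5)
Your proposal is correct and follows essentially the same route as the paper: the paper's proof is precisely the argument of Corollary~\ref{corocenter1} with the single modification you identify, namely that the strengthened $o(|\tilde{x}|^{-3})$ parity hypothesis on $k$ and $\nabla k$ makes the centered integral $r^3\int_{S_r(q)}2W_2\,\alpha\,d\mu_\delta$ of order $o(1)$, while the $DW_2$ correction is retained and absorbed into $Z$ by the convergence hypothesis. One cosmetic remark: in your closing sentence the required gain is that the odd part of $W_2$ be $o(|\tilde{x}|^{-6})$ (exactly what your computation shows), not merely that it decay strictly faster than $|\tilde{x}|^{-5}$.
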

\begin{proof}
    The proof is similar to the one of Corollary \ref{corocenter1}, but this time $  r^3 \int_{S_{r}(q)}   2 W_2 \alpha d\mu_\delta =o(1)$, by the parity conditions.
\end{proof}
\begin{remark}
Note that in Theorem \ref{willcenmain} the center of symmetry of the previous two results $q$ would be the Hamiltonian center of mass $C_H$ however in case $k\neq 0$  the conditions 
    \begin{equation}
    \mathrm{Sc}(\tilde{x}) - \mathrm{Sc}(-\tilde{x}) = o(|x|^{-5}), \quad
  \tilde{x}^i \partial_i(|\tilde{x}|^2 \mathrm{Sc} (\tilde{x})) \leq o(|x|^{-3}),  
\end{equation}
are not enough to eliminate the $\mathrm{Sc} $ dependence on the center of the foliation, since the argument to eliminate the term containing $\mathrm{Sc} $ relies on the fact of not having other higher-order terms in the expression of $r \xi$ besides $\frac{r^3}{128 \pi} \int_{S_r(q)} \mathrm{Sc} \, \nu^j d\mu_{\delta}$ and $C_H$.
\end{remark}

\begin{remark}[Comparison of the Hawking energy]
\label{rmk:EH-comparison}
It follows from the decay of the first variation of the Hawking functional that
the Hawking energy of the leaves of the foliation can be estimated by that of
coordinate spheres. More precisely, one has
\begin{equation}
    \mathcal{E}(S_r(r\xi)) - \mathcal{E}(\Sigma(r)) = \mathcal{O}(r^{-2}) \quad \text{and} \quad  \mathcal{E}(S_r(0)) - \mathcal{E}(S_r(r\xi)) = \mathcal{O}(|\xi|),
\end{equation}
so that
\begin{equation}
    \mathcal{E}(\Sigma(r)) = \mathcal{E}(S_r(0)) + \mathcal{O}(|\xi|).
\end{equation}
Under the assumptions of Section~\ref{large foliations}, the translation
parameter $|\xi|$ is controlled by the decay of the scalar curvature and of
the tensor $k$.  In particular, under the stronger asymptotic conditions of
Section~\ref{sectioncenter}, one has $|\xi| = \mathcal{O}(r^{-1})$, and hence
   \begin{equation}
        \mathcal{E}(\Sigma(r)) = \mathcal{E}(S_r(0)) + \mathcal{O}(r^{-1}).
\end{equation}
\end{remark}
\section{Nonnegativity, monotonicity and rigidity}\label{rigifoligene}

 In this section, we will assume the dominant energy condition, which is  given by 
\begin{equation}
    \mu \geq |J|
\end{equation}
where 
\begin{equation}
     2 \mu:=\mathrm{Sc} + (\tr k)^2 - |k|^2 \quad \text{and} \quad  J:=\diver (k -(\tr k) g)
\end{equation}
are the energy density and the momentum density of the Einstein constraint equations.

\subsection{Nonnegativity along the foliation}

 In \cite{rigidiaz}, the following nonnegativity and rigidity result for Hawking surfaces was obtained. 
\begin{theorem}[{\cite[Theorem 3.6]{rigidiaz}}]\label{positivity0}
    Let $(M,g,k)$ be a $3$-dimensional initial data set satisfying the dominant energy condition and let $\Sigma$ be a Hawking surface with positive mean curvature, and such that for 
    \begin{equation}
        f:= \left( \frac{P}{H}\right)^2|k|^2+ \frac{1}{2 }(\tr k)^2    - \frac{3}{4} P^2- \frac{P}{H}( \nabla_\nu \tr k - \nabla_\nu k(\nu,\nu )) -  \frac{1}{2} |k|^2  -\frac{1}{2} |\mathring{B}|^2 -|J|
    \end{equation}
    the surface satisfies $\int_\Sigma f -\lambda d\mu \leq 0$. Then $\int_\Sigma H^2 -P^2 d\mu \leq 16\pi $, and  if $\int_\Sigma f -\lambda d\mu < 0$ then  $\int_\Sigma H^2 -P^2 d\mu < 16\pi $. In particular, the Hawking energy is nonnegative.

If additionally  $\Sigma $ is the boundary of a relatively compact domain  and there exists a constant $0\leq\beta <\frac{1}{2}$ such that $\int_\Sigma f_\beta -\lambda \, d\mu \leq 0$  for 
 \begin{equation}
        f_\beta:= \left( \frac{P}{H}\right)^2|k|^2+ \frac{1}{2 }(\tr k)^2   - \frac{3}{4} P^2- \frac{P}{H}( \nabla_\nu \tr k - \nabla_\nu k(\nu,\nu ))  -  \beta( |k|^2 + |\mathring{B}|^2 +2|J|).
    \end{equation}
Then if $\int_\Sigma H^2 -P^2 d\mu = 16\pi $, then   $\Omega$ is isometric to a spacelike hypersurface in Minkowski
spacetime with second fundamental form  $k$, $\Sigma$ is an umbilic round sphere and $k=0$ on $\Sigma$.
\end{theorem}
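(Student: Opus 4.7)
The plan is to test the Euler--Lagrange equation \eqref{eulag} against $1/H$ (admissible since $H>0$), integrate over the closed surface $\Sigma$, and combine the resulting identity with the Gauss equation, Gauss--Bonnet on a topological sphere, and the dominant energy condition. Division by $H$ and integration turn the Laplace term into the nonnegative quantity
\[
\int_\Sigma \frac{\Delta^\Sigma H}{H}\,d\mu \;=\; \int_\Sigma \frac{|\nabla^\Sigma H|^2}{H^2}\,d\mu,
\]
via $\Delta^\Sigma H/H=\Delta^\Sigma\log H+|\nabla^\Sigma H|^2/H^2$. The three-dimensional Gauss identity $2\,\mathrm{Ric}(\nu,\nu)=\mathrm{Sc}+\tfrac12 H^2-|\mathring{B}|^2-2K^\Sigma$ together with $\int_\Sigma K^\Sigma\,d\mu=4\pi$ converts $\int_\Sigma \mathrm{Ric}(\nu,\nu)\,d\mu$ into terms in $\int_\Sigma\mathrm{Sc}$, $\int_\Sigma H^2$, $\int_\Sigma|\mathring{B}|^2$, and the constant $-4\pi$. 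The $W_2/H$ contribution yields the pointwise piece $\int_\Sigma (P/H)(\nabla_\nu\tr k-\nabla_\nu k(\nu,\nu))\,d\mu+\tfrac12\int_\Sigma P^2\,d\mu$ and, after integrating the divergence by parts via $\int_\Sigma (1/H)\diver_\Sigma X\,d\mu=\int_\Sigma (1/H^2)\langle\nabla^\Sigma H,X\rangle\,d\mu$ with $X=Pk(\cdot,\nu)$, the cross term $-2\int_\Sigma (P/H^2)k(\nabla^\Sigma H,\nu)\,d\mu$.

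Next, I would solve for $\int_\Sigma H^2\,d\mu$, subtract $\int_\Sigma P^2\,d\mu$, and apply the DEC bound $\mathrm{Sc}\ge 2|J|+|k|^2-(\tr k)^2$ to control $-2\int_\Sigma\mathrm{Sc}\,d\mu$ from above. The leftover gradient pair $-4|\nabla^\Sigma H|^2/H^2+8(P/H^2)k(\nabla^\Sigma H,\nu)$ is absorbed by completing the square in $\nabla^\Sigma H/H$ against the tangential vector $w\in T\Sigma$ dual to $k(\cdot,\nu)|_{T\Sigma}$, which contributes an error bounded by $4(P/H)^2|w|^2\le 4(P/H)^2|k|^2$ since $|w|^2=|k(\cdot,\nu)|_\Sigma^2\le |k|^2$. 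Collecting all contributions produces the clean inequality
\[
\int_\Sigma (H^2-P^2)\,d\mu-16\pi \;\le\; 4\int_\Sigma (f-\lambda)\,d\mu,
\]
so the hypothesis $\int_\Sigma(f-\lambda)\,d\mu\le 0$ forces $\int_\Sigma (H^2-P^2)\,d\mu\le 16\pi$ and hence $\mathcal{E}(\Sigma)\ge 0$ via \eqref{hawkingmass2.intro}; strictness propagates through every step, giving the strict clause.

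For the rigidity statement, I would re-run the same argument with the $\beta$-weighted variant built from $f_\beta$: each of the bad pointwise terms $|k|^2$, $|\mathring{B}|^2$, $|J|$ is split into a $2\beta$-share (reabsorbed exactly as above) and a $(1-2\beta)$-share (retained as coercive pointwise slack). Saturation $\int_\Sigma(H^2-P^2)\,d\mu=16\pi$ then forces every intermediate inequality to be tight: $|\mathring{B}|\equiv 0$ on $\Sigma$ (so $\Sigma$ is umbilic); $\nabla^\Sigma H$ aligned with $w$ so the completion of squares closes; pointwise saturation of the DEC on $\Sigma$; and, using the $(1-2\beta)$-slack, the vanishing of $k|_\Sigma$, constancy of $H$, and roundness of $\Sigma$. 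The main obstacle, and the step that genuinely uses theory beyond the algebraic bookkeeping, is to propagate this boundary rigidity inward to $\Omega$: one invokes a spacetime positive-mass-type argument on the enclosed domain---either a Jang-graph reduction in the spirit of Schoen--Yau and Eichmair--Huang--Lee or a Bray--Miao style boundary comparison---using that $\Sigma$ now carries the geometry of a round sphere in a $k=0$ time slice of Minkowski and that the vanishing Hawking energy pins the spacetime quasi-local mass of $\Omega$ to zero, yielding the isometric embedding of $(\Omega,g,k)$ into Minkowski. The margin $\beta<1/2$ is exactly what provides the coercivity needed to upgrade the pointwise boundary conclusions to this global rigidity.
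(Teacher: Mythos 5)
Your positivity half is correct and is essentially the argument the paper itself uses (it is the same computation the paper writes out for Theorem \ref{monotinic0}): test the Euler--Lagrange equation \eqref{eulag} against $1/H$, turn $\int_\Sigma \Delta^\Sigma H/H\,d\mu$ into $\int_\Sigma|\nabla^\Sigma\log H|^2\,d\mu$, use the Gauss equation and Gauss--Bonnet, insert the dominant energy condition, and absorb the cross term $-2\int_\Sigma (P/H^{2})\,k(\nabla^\Sigma H,\nu)\,d\mu$ by completing a square whose error is exactly the $(P/H)^2|k|^2$ term carried inside $f$; this is the paper's observation that $g=-(P/H)^2|k|^2-|\nabla^\Sigma\log H|^2+\tfrac{2P}{H}k(\nabla^\Sigma\log H,\nu)\le 0$, and your resulting inequality $\int_\Sigma(H^2-P^2)\,d\mu-16\pi\le 4\int_\Sigma(f-\lambda)\,d\mu$, together with its strict version, gives the nonnegativity clause. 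The boundary consequences of saturation in the $\beta$-weighted version are also essentially right: since $f_\beta\ge f$, equality forces $(\tfrac12-\beta)\int_\Sigma(|k|^2+|\mathring{B}|^2+2|J|)\,d\mu=0$, $g\equiv0$, $\mu=|J|=0$ on $\Sigma$ and $\lambda=0$, whence $k|_\Sigma=0$, $\mathring{B}=0$, $H$ constant and (via the Euler--Lagrange equation and the Gauss equation) $\Sigma$ round.

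The genuine gap is the interior rigidity, which is the actual content of the second clause: that $(\Omega,g,k)$ is isometric to a domain in a spacelike hypersurface of Minkowski spacetime. Your proposal defers this to ``a Jang-graph reduction in the spirit of Schoen--Yau and Eichmair--Huang--Lee or a Bray--Miao style boundary comparison'' without selecting a tool, verifying its hypotheses, or explaining how its equality case produces the stated conclusion. Moreover, the assertion that ``the vanishing Hawking energy pins the spacetime quasi-local mass of $\Omega$ to zero'' is not an argument: $\mathcal E(\Sigma)=0$ is just $\int_\Sigma(H^2-P^2)\,d\mu=16\pi$ and does not by itself control any filling-type quasi-local mass. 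What is actually required is to use the boundary rigidity you derived ($\Sigma$ a round sphere of area radius $r$ with $H=2/r$ and $k|_\Sigma=0$, so that a Liu--Yau/Shi--Tam-type quasi-local mass of the compact filling vanishes) and then invoke the positivity-with-rigidity theorem for that mass on $(\Omega,g,k)$ under the dominant energy condition, checking its hypotheses (e.g.\ positivity of the boundary Gauss curvature and of the spacetime mean curvature) and that its equality case really yields the isometric embedding with second fundamental form $k$. That step is the crux of \cite[Theorem 3.6]{rigidiaz}, and as written your rigidity half is a plan rather than a proof.
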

As mentioned in the remark following \cite[Theorem 3.6]{rigidiaz}   one could define $f$ differently, 
\begin{equation}\label{ftil}
    \Tilde{f}:= \frac{2P}{H}  k(\nabla^\Sigma \log H, \nu)+ \frac{1}{2 }(\tr k)^2    - \frac{3}{4} P^2- \frac{P}{H}( \nabla_\nu \tr k - \nabla_\nu k(\nu,\nu )) -  \frac{1}{2} |k|^2  -\frac{1}{2} |\mathring{B}|^2 -|J|.
\end{equation}
and one would also obtain nonnegativity of the Hawking energy. The same argument applies if one replaces $ \Tilde{f}$ by an  analogous   $ \Tilde{f}_{\beta}$, yielding an identical rigidity conclusion. We consider $f$ for computation simplicity.

First, we note that the Hawking energy is positive both along the foliation of Theorem~\ref{exisfoli} and for the on–center family of Theorem~\ref{family}. This follows directly from \cite[Theorem~2.2]{diaz2023local} (which treats the foliation) or from a direct application of Theorem~\ref{positivity0}.
\begin{theorem}\label{positivity}
Let $(M,g,k)$ be $ \mathcal{C}^4$–asymptotic to Schwarzschild and satisfy the dominant energy condition. Then the Hawking energy is positive on every leaf of the foliation of Theorem~\ref{exisfoli} and on every surface in the on–center family of Theorem~\ref{family}.
\end{theorem}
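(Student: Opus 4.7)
The plan is to verify the two non-trivial hypotheses of Theorem~\ref{positivity0} for every sufficiently large leaf $\Sigma(\lambda)$ of the foliation of Theorem~\ref{exisfoli}, and for every surface $\Sigma_{\xi,r}$ in the on-center family of Theorem~\ref{family}. Both families consist of Hawking surfaces with Lagrange parameter $\lambda$ by construction, and the dominant energy condition is part of the assumptions; so the remaining conditions to check are \emph{(i)} $H>0$ on $\Sigma$, and \emph{(ii)} $\int_\Sigma (f-\lambda)\,d\mu \leq 0$. Moreover, if \emph{(ii)} is a strict inequality, then Theorem~\ref{positivity0} gives $\int_\Sigma H^{2}-P^{2}\,d\mu<16\pi$, which is exactly strict positivity of $\mathcal{E}(\Sigma)$.

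For \emph{(i)}, I would combine Proposition~\ref{17EK} and Lemma~\ref{expansquafoli}, which realise $\Sigma$ as the normal graph of $u_{\xi,r}$ with uniformly bounded $\mathcal{C}^{4}$-norm over a coordinate sphere $S_{\xi,r}$ having $|\xi|\leq 1-\tilde\delta$, together with the asymptotic Schwarzschild expansion, to conclude $H=2r^{-1}+\mathcal{O}(r^{-2})$ uniformly in $\xi$. In particular $H>0$ on $\Sigma$ once $r$ is large (equivalently $\lambda$ small).

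For \emph{(ii)}, the strategy is to estimate each term of $f$ pointwise from the decay in Definition~\ref{asymptoticsc}, namely $|k|=\mathcal{O}(r^{-2})$ and $|\nabla k|=\mathcal{O}(r^{-3})$. Using the expansion of $H$ above, the positive and sign-indefinite terms $(P/H)^{2}|k|^{2}$, $\tfrac12(\tr k)^{2}$, and $\left|(P/H)(\nabla_{\nu}\tr k - \nabla_{\nu}k(\nu,\nu))\right|$ are each at most $\mathcal{O}(r^{-4})$, and since the remaining contributions to $f$ are nonpositive, integration over $\Sigma$ (whose area is $4\pi r^{2}$) gives $\int_\Sigma f\,d\mu\leq \mathcal{O}(r^{-2})$. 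On the other hand, Lemma~\ref{expansquafoli} provides $\lambda=4r^{-3}+\mathcal{O}(r^{-4})$, whence $\int_\Sigma \lambda\,d\mu=16\pi r^{-1}+\mathcal{O}(r^{-2})$, which strictly dominates the previous upper bound for $r$ large. Hence $\int_\Sigma (f-\lambda)\,d\mu<0$, and therefore $\mathcal{E}(\Sigma)>0$, for all large $r$.

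The main difficulty is not deep but bookkeeping: one has to ensure the $\mathcal{O}$-estimates for $H$, $\lambda$, and the integrands in $f$ are uniform in $\xi$ over $|\xi|\leq 1-\tilde\delta$, so that the argument applies simultaneously to every leaf and to every member of the on-center family; this uniformity is already encoded in Proposition~\ref{17EK} and Lemma~\ref{expansquafoli}. Notably, no bound on $\int_\Sigma|\mathring{B}|^{2}\,d\mu$ is needed, because this term enters $f$ with the favourable sign. As an alternative to the direct argument above, one may appeal to \cite[Theorem~2.2]{diaz2023local} to handle the foliation case and then invoke Theorem~\ref{positivity0} only for the on-center family of Theorem~\ref{family}.
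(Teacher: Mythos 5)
Your proposal is correct and follows essentially the same route as the paper: both apply Theorem~\ref{positivity0}, using $H=2/r+\mathcal{O}(r^{-2})>0$ and $\lambda=2m/r^3+\mathcal{O}(r^{-4})>0$ from Lemma~\ref{expansquafoli} together with the decay of $k$ so that the $-\lambda$ term dominates the $\mathcal{O}(r^{-4})$ contributions to $f$, giving $\int_\Sigma (f-\lambda)\,d\mu<0$ (the paper even argues this pointwise). Your fallback via \cite[Theorem~2.2]{diaz2023local} for the foliation case is likewise exactly the alternative the paper mentions.
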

\begin{proof}
    We will apply Theorem \ref{positivity0}, therefore we need to see that $\int_\Sigma f-\lambda d\mu < 0$, which would imply positivity. By  Lemma \ref{expansquafoli}  we have $\lambda =\frac{4}{r^3} +\mathcal{O}(r^{-4}) $, in this lemma  $m$ was normalized to be equal to $2$, taking this into account we have  $0<\lambda =\frac{2m}{r^3} +\mathcal{O}(r^{-4})$. We also have that  $ H= \frac{2}{r} +\mathcal{O}(r^{-2})$ then by the decay conditions of $k$  the term $-\lambda$ dominates and it holds  $f-\lambda < 0$
\end{proof}
\subsection{Monotonicity}

Monotonicity lies at the very heart of what one expects from a physically meaningful quasi‐local energy: as a family of surfaces expands outward in an initial data set satisfying the dominant energy condition, its measured energy should not decrease. One of the most celebrated applications of monotonicity properties is in the proof of the Riemannian Penrose inequality by Huisken and Ilmanen \cite{huisken2001inverse}. In their  work, they employed the inverse mean curvature flow to show that the Hawking energy is nondecreasing along the flow under suitable conditions, such as nonnegative scalar curvature. This monotonicity, in the time-symmetric initial data setting, allowed them to rigorously relate the ADM mass of an asymptotically flat manifold to the area of its outermost apparent horizon.

Since then, there have been several generalizations of this monotonicity for the Hawking energy to different settings. For instance, Bray, Jauregui, and Mars in \cite{bray2015time,bray2016time} established monotonicity results for the spacetime setting for "time flat" surfaces, while more recently, Hirsch studied monotonicity in a general initial data set setting \cite{hirsch2022hawking}.

We aim to show that the Hawking energy is monotonically increasing along the previously constructed foliation. This property is already established in the totally geodesic case by the following result.
\begin{theorem}[{\cite[Theorem 4]{willflat}}]
    If \((M, g)\) satisfies \( \text{Sc} \geq 0 \) and if \( \Sigma \) is a compact spherical area-constrained Willmore surface  with \( H > 0 \), then \( m_H (\Sigma) \geq 0 \) if \( \lambda \geq 0 \). Furthermore, if \( F : \Sigma \times [0, \varepsilon) \rightarrow M \) is a variation with initial velocity \( \frac{\partial F}{\partial s} \Big|_{s=0} = \alpha \nu \) and $\int_{\Sigma} \alpha H \, d\mu \geq 0$,
then 
\[
\frac{d}{ds}  \mathcal{E}(F(\Sigma, s)) \geq 0.
\]
Note that the condition on \( \alpha \) means that the area is increasing along the variation. In particular the Hawking energy is monotonically increasing along the Willmore foliation.
\end{theorem}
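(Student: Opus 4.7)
Throughout, work in the time-symmetric setting ($k=0$, so $P=0$), where the Hawking energy reduces to $\mathcal E(\Sigma)=\sqrt{|\Sigma|/16\pi}\bigl(1-\mathcal W(\Sigma)/(4\pi)\bigr)$ with $\mathcal W(\Sigma)=\tfrac14\int_\Sigma H^2\,d\mu$, and the critical equation \eqref{eulag} reduces to the Willmore equation $\lambda H+W_1=0$ with $W_1=\Delta^\Sigma H+H|\mathring B|^2+H\,\mathrm{Ric}(\nu,\nu)$.

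\emph{Step 1: nonnegativity via an $H^{-1}$ identity.} Since $H>0$, I would divide the Euler--Lagrange equation by $H$ and integrate against $d\mu$, giving
\[
\lambda|\Sigma|+\int_\Sigma\!\Bigl(H^{-1}\Delta^\Sigma H+|\mathring B|^2+\mathrm{Ric}(\nu,\nu)\Bigr)\,d\mu=0.
\]
Integration by parts converts the first term to $\int |\nabla^\Sigma H|^2/H^2\,d\mu\ge0$. The Gauss equation then yields $\mathrm{Ric}(\nu,\nu)=\tfrac12\mathrm{Sc}-\tfrac12\mathrm{Sc}^\Sigma+\tfrac14 H^2-\tfrac12|\mathring B|^2$, and, since $\Sigma$ is a topological sphere, Gauss--Bonnet gives $\int_\Sigma \mathrm{Sc}^\Sigma\,d\mu=8\pi$. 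Combining these I obtain the clean identity
\[
4\pi-\tfrac14\int_\Sigma H^2\,d\mu\;=\;\lambda|\Sigma|+\int_\Sigma\!\Bigl(\tfrac{|\nabla^\Sigma H|^2}{H^2}+\tfrac12|\mathring B|^2+\tfrac12\mathrm{Sc}\Bigr)d\mu,
\]
whose right-hand side is nonnegative under $\mathrm{Sc}\ge0$ and $\lambda\ge0$. Hence $\int_\Sigma H^2\,d\mu\le 16\pi$, equivalently $\mathcal E(\Sigma)\ge 0$.

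\emph{Step 2: monotonicity along variations.} I would differentiate $\mathcal E$ directly. The standard first variations give $\frac{d}{ds}|\Sigma|=\int_\Sigma\alpha H\,d\mu$ and $\delta_\alpha\mathcal W=\int_\Sigma\alpha\, W_1\,d\mu$. Since $\Sigma$ is area-constrained Willmore, $W_1=-\lambda H$, so $\delta_\alpha\mathcal W=-\lambda\int_\Sigma\alpha H\,d\mu$. Writing $A=|\Sigma|$ for brevity,
\[
\frac{d}{ds}\mathcal E\;=\;\frac{1}{\sqrt{16\pi}}\!\left[\frac{\int_\Sigma\alpha H\,d\mu}{2\sqrt A}\Bigl(1-\tfrac{\mathcal W}{4\pi}\Bigr)+\sqrt A\cdot\tfrac{\lambda}{4\pi}\!\int_\Sigma\alpha H\,d\mu\right]=\frac{\int_\Sigma\alpha H\,d\mu}{\sqrt{16\pi}}\!\left[\frac{1-\mathcal W/(4\pi)}{2\sqrt A}+\frac{\sqrt A\,\lambda}{4\pi}\right]\!.
\]
Under the hypothesis $\int_\Sigma\alpha H\,d\mu\ge0$ the prefactor is nonnegative, and the bracket is nonnegative precisely because Step~1 gives $\mathcal W\le 4\pi$ and $\lambda\ge0$. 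This yields $\frac{d}{ds}\mathcal E\ge0$, completing the monotonicity statement; the monotonicity along the Willmore foliation then follows because the leaves have increasing area.

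\emph{Expected obstacle.} There is no deep analytic difficulty; the only subtlety is bookkeeping. One must (i) use the critical equation in the form $W_1=-\lambda H$ to rewrite $\delta_\alpha\mathcal W$, and (ii) ensure the sign of the bracketed coefficient is controlled, which forces the chain of implications ``$\lambda\ge0$ and $\mathrm{Sc}\ge0$ $\Rightarrow$ $\mathcal W\le 4\pi$ $\Rightarrow$ bracket $\ge0$''. Thus Step~1 is logically prior to Step~2: the nonnegativity proved through the $H^{-1}$-test function is exactly what makes the variational bracket nonnegative and lets monotonicity be read off from the Euler--Lagrange equation alone.
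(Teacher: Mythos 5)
Your Step 1 is correct and is exactly the mechanism the paper uses for its dynamical generalization (Theorem \ref{monotinic0}): divide the Euler--Lagrange equation by $H$, integrate by parts, and use the Gauss equation plus Gauss--Bonnet to obtain $\lambda|\Sigma|+\tfrac14\int_\Sigma H^2\,d\mu=4\pi-\int_\Sigma\bigl(|\nabla^\Sigma H|^2/H^2+\tfrac12|\mathring{B}|^2+\tfrac12\mathrm{Sc}\bigr)d\mu$. The gap is in Step 2, in the first variation of the Willmore energy. With the convention under which the constrained equation reads $\lambda H+W_1=0$ with $W_1=\Delta^\Sigma H+H|\mathring{B}|^2+H\,\mathrm{Ric}(\nu,\nu)$ --- the convention you use pointwise in Step 1 --- the normal variation of the mean curvature is $H'=-\Delta^\Sigma\alpha-(|B|^2+\mathrm{Ric}(\nu,\nu))\alpha$, and a short computation gives $\delta_\alpha\mathcal W=-\tfrac12\int_\Sigma\alpha\,W_1\,d\mu=+\tfrac{\lambda}{2}\int_\Sigma\alpha H\,d\mu$, not $\int_\Sigma\alpha\,W_1\,d\mu=-\lambda\int_\Sigma\alpha H\,d\mu$. (You cannot switch to an ``$L^2$-gradient'' normalization $\delta_\alpha\mathcal W=\int\alpha W_1$ in Step 2 without simultaneously changing the meaning of the multiplier $\lambda$ in the equation you divided by $H$ in Step 1; within one proof the two uses must be consistent.) Correcting sign and factor, $\frac{d}{ds}\mathcal E$ is a nonnegative multiple of $\bigl(\int_\Sigma\alpha H\,d\mu\bigr)\bigl(4\pi-\lambda|\Sigma|-\tfrac14\int_\Sigma H^2\,d\mu\bigr)$, which is the paper's bracket $16\pi-4\lambda|\Sigma|-\int_\Sigma(H^2-P^2)\,d\mu$ with $P=0$; your bracket carries the $\lambda$-term with the opposite sign.

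This matters in two ways. First, your closing logic ``bracket $\ge0$ because $\mathcal W\le4\pi$ and $\lambda\ge0$'' does not survive the correction: with the right sign one needs $\lambda|\Sigma|+\mathcal W\le4\pi$, which $\mathcal W\le4\pi$ and $\lambda\ge0$ alone do not give. Fortunately your Step 1 identity delivers exactly this stronger inequality, and under $\mathrm{Sc}\ge0$ alone, so after the repair the monotonicity clause requires no sign condition on $\lambda$ (consistent with Theorem \ref{monotinic0}); as written, your argument only proves a weaker statement. Second, your formula fails a basic test: for centered coordinate spheres in exact Schwarzschild one has $\mathrm{Sc}=0$, $\mathring{B}=0$ and $H$ constant, so the Step 1 identity gives $4\pi-\mathcal W=\lambda|\Sigma|>0$; your bracket then equals a positive multiple of $3\lambda|\Sigma|>0$ and predicts strictly increasing Hawking energy in the radial direction, whereas the Hawking energy of these spheres is identically $m$. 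The corrected bracket vanishes there, as it must.
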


We want to study what happens in the general case under the dominant energy condition.
\begin{theorem}\label{monotinic0}
   Let $(M,g,k)$ be an initial data set  satisfying the dominant energy condition and let \( \Sigma \) be a Hawking surface with positive mean curvature satisfying  $\int_\Sigma f \, d\mu \leq 0$ for
    \begin{equation}
        f:= \left( \frac{P}{H}\right)^2|k|^2+ \frac{1}{2 }(\tr k)^2    - \frac{3}{4} P^2- \frac{P}{H}( \nabla_\nu \tr k - \nabla_\nu k(\nu,\nu )) -  \frac{1}{2} |k|^2  -\frac{1}{2} |\mathring{B}|^2 -|J|.
    \end{equation}
     Then if \( F : \Sigma \times [0, \varepsilon) \rightarrow M \) is a variation with initial velocity \( \frac{\partial F}{\partial s} \Big|_{s=0} = \alpha \nu \) and $\int_{\Sigma} \alpha H \, d\mu \geq 0,$ then 
\[
\frac{d}{ds} \mathcal{E}(F(\Sigma, s)) \geq 0.
\]
\end{theorem}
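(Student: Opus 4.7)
The plan is to differentiate $\mathcal E$ explicitly along the variation, factor out the hypothesized nonnegative quantity $\int_\Sigma\alpha H\,d\mu$, and reduce the sign of the remaining factor to the sharp integrated inequality underlying Theorem~\ref{positivity0}.

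I would begin by recording first-variation formulas. Area varies as $\delta|\Sigma|=\int_\Sigma\alpha H\,d\mu$. Setting $Q:=\int_\Sigma(H^2-P^2)\,d\mu$, the standard identity $\delta H=-\Delta^\Sigma\alpha-(|B|^2+\mathrm{Ric}(\nu,\nu))\alpha$ combined with $|B|^2=|\mathring B|^2+H^2/2$ and integration by parts yields $\delta\int_\Sigma H^2\,d\mu=-2\int_\Sigma\alpha W_1\,d\mu$; formula \eqref{firstvary} gives $\delta\int_\Sigma P^2\,d\mu=2\int_\Sigma\alpha W_2\,d\mu$. Because $\Sigma$ is a Hawking surface, \eqref{eulag} rearranges as $W_1+W_2=-\lambda H$ on $\Sigma$, so
\begin{equation*}
\delta Q \;=\; -2\int_\Sigma\alpha(W_1+W_2)\,d\mu \;=\; 2\lambda\int_\Sigma\alpha H\,d\mu.
\end{equation*}
Substituting into $\mathcal E=\sqrt{|\Sigma|/(16\pi)}(1-Q/(16\pi))$, with $r=\sqrt{|\Sigma|/(4\pi)}$, then produces the clean identity
\begin{equation*}
\frac{d\mathcal E}{ds}\bigg|_{s=0} \;=\; \frac{\int_\Sigma\alpha H\,d\mu}{(16\pi)^{2}\,r}\,\Bigl[\,16\pi - Q - 4\lambda|\Sigma|\,\Bigr].
\end{equation*}
Since $\int_\Sigma\alpha H\,d\mu\ge 0$ by hypothesis, it suffices to show the bracket is nonnegative, i.e.\ $Q+4\lambda|\Sigma|\le 16\pi$. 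As a sanity check, centered spheres in Schwarzschild satisfy $Q=16\pi(1-2m/r)$ and $\lambda=2m/r^{3}$, making the bracket vanish identically—consistent with $\mathcal E\equiv m$ along the Schwarzschild foliation.

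For the bracket bound I would revisit the Christodoulou--Yau-type argument underlying \cite[Theorem~3.6]{rigidiaz} (our Theorem~\ref{positivity0}): pairing \eqref{eulag} with $H$, applying the Gauss equation together with Gauss--Bonnet $\int_\Sigma K_\Sigma\,d\mu=4\pi$ for the topological sphere $\Sigma$, using the Einstein constraint $2\mu=\mathrm{Sc}+(\tr k)^{2}-|k|^{2}$, and invoking the DEC $\mu\ge|J|$ in fact establishes the sharper integrated inequality
\begin{equation*}
Q - 16\pi \;\le\; 4\int_\Sigma(f-\lambda)\,d\mu,
\end{equation*}
of which the bound $Q\le 16\pi$ (under $\int_\Sigma(f-\lambda)\,d\mu\le 0$) stated in Theorem~\ref{positivity0} is the direct corollary. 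Applying the present hypothesis $\int_\Sigma f\,d\mu\le 0$ then yields $Q\le 16\pi-4\lambda|\Sigma|$, so the bracket is nonnegative and the derivative of $\mathcal E$ is nonnegative, as claimed.

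The main obstacle is tracking the precise coefficient of $\lambda|\Sigma|$ on the right-hand side of the sharp inequality: the coarser bound $Q\le 16\pi$ alone would leave the bracket bounded below only by $-4\lambda|\Sigma|$, which is negative when $\lambda>0$ and thus insufficient. The delicate point is that the $\lambda\int_\Sigma H^{2}\,d\mu$ contribution produced upon pairing \eqref{eulag} with $H$ must, after invoking Gauss--Bonnet and the cancellations among the $k$-dependent terms of \eqref{eulag}, simplify to exactly $4\lambda|\Sigma|$, matching the coefficient in the variation formula.
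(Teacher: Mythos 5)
Your proposal follows essentially the same route as the paper: the first-variation computation, the factoring out of $\int_\Sigma \alpha H\,d\mu$, and the reduction of monotonicity to the bound $\lambda|\Sigma|+\tfrac14\int_\Sigma (H^2-P^2)\,d\mu\le 4\pi$ coincide with the paper's first step, and the "sharper integrated inequality" $\int_\Sigma(H^2-P^2)\,d\mu-16\pi\le 4\int_\Sigma(f-\lambda)\,d\mu$ that you invoke is exactly what the paper derives, so the overall logic is sound. One correction to your last paragraph, which describes the crux inaccurately: the inequality is obtained by \emph{dividing} the Euler--Lagrange equation \eqref{eulag} by $H$ (this is precisely where the hypothesis $H>0$ enters) and integrating, so the multiplier term contributes $\lambda|\Sigma|$ on the nose; one then integrates $\Delta^\Sigma H/H$ by parts, applies the Gauss equation and Gauss--Bonnet, and collects the remainder as $\int_\Sigma\bigl(f+g-(\mu-|J|)\bigr)\,d\mu$ with $g:=-\bigl(\tfrac{P}{H}\bigr)^2|k|^2-|\nabla^\Sigma\log H|^2+\tfrac{2P}{H}\,k(\nabla^\Sigma\log H,\nu)\le 0$ by Cauchy--Schwarz, whence the DEC and $\int_\Sigma f\,d\mu\le0$ finish the argument. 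Pairing \eqref{eulag} with $H$, as you wrote, produces $\lambda\int_\Sigma H^2\,d\mu$, which does \emph{not} simplify to $4\lambda|\Sigma|$ in general, so the "delicate point" as you phrased it would not go through literally; the coefficient matching you worry about is automatic once the pairing is with $H^{-1}$ rather than $H$.
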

\begin{proof}
We proceed as in \cite[Theorem 4]{willflat}. We compute the variation of the Hawking energy
\begin{equation}
    \begin{split}
        (16 \pi)^{3/2} \frac{d}{ds}  \mathcal{E}(\Sigma_s)_{|s=0}&= \frac{1}{2|\Sigma|^{1/2} }\left(\int_\Sigma H \alpha d\mu \right) \left(16 \pi -  \int_\Sigma H^2-P^2 d\mu \right)-  2 |\Sigma|^{1/2} \int_\Sigma \lambda H \alpha d\mu\\
        &=\frac{1}{2|\Sigma|^{1/2} }\left(\int_\Sigma H \alpha d\mu \right) \left(16 \pi - 4 \lambda |\Sigma| - \int_\Sigma H^2-P^2 d\mu \right)\\
    \end{split}
\end{equation}
Then  we need to show that $\lambda |\Sigma| + \frac{1}{4}\int_\Sigma H^2-P^2 d\mu \leq 4 \pi$. 
We consider  equation (\ref{eulag}), divide it  by $H$, integrate by parts the term $\frac{\Delta H}{H}$ and use the Gauss equation $2\mathrm{Ric}(\nu, \nu) = \mathrm{Sc} -\mathrm{Sc}^{\Sigma_r} + H^2 -|B|^2  $ obtaining
\begin{equation*}
\begin{split}
   0=  \int_{\Sigma_r}&\lambda+ |\nabla^\Sigma  \log H|^2 + \frac{1}{2} |\mathring{B}|^2+  \frac{1}{2}(\mathrm{Sc} -\mathrm{Sc}^{\Sigma_r}) +\frac{P}{H}( \nabla_\nu \tr k - \nabla_\nu k(\nu,\nu ))\\& +\frac{1}{4}H^2 + \frac{1}{2} P^2 - \frac{2P}{H}  k(\nabla^\Sigma \log H, \nu)d\mu. 
    \end{split}
\end{equation*}
Now using  Gauss-Bonnet theorem to replace $\mathrm{Sc}^{\Sigma_r}$,  adding and subtracting $(\tr k)^2$, $|k|^2$ and $|J|$ we have 
\begin{equation*}
\begin{split}
   \lambda |\Sigma|+  \frac{1}{4}\int_\Sigma H^2-P^2 d\mu &= 4\pi +\int_\Sigma \frac{(\tr k)^2}{2 } - \frac{|k|^2}{2} -\frac{P}{H}( \nabla_\nu \tr k - \nabla_\nu k(\nu,\nu ))  - \frac{|\mathring{B}|^2}{2}   - \frac{3}{4} P^2 -|J| \\
   &   -|\nabla^\Sigma  \log H|^2 + \frac{2P}{H}  k(\nabla^\Sigma \log H, \nu) -\frac{1}{2}(\mathrm{Sc} + (\tr k)^2 - |k|^2-2|J|) d\mu.\\
    &=4\pi +\int_\Sigma  f+g -(\mu-|J|)\, d\mu
    \end{split}
\end{equation*}
where 
$$g= -\left( \frac{P}{H}\right)^2|k|^2 -|\nabla^\Sigma  \log H|^2 + \frac{2P}{H}  k(\nabla^\Sigma \log H, \nu). $$
It is direct to see that $g\leq 0$, then we have that the integral is nonpositive and the result follows.
\end{proof}
\begin{remark}
    Here again, just as in Theorem \ref{positivity0}, one may replace $f$ by 
    $ \Tilde{f}:= \frac{2P}{H}  k(\nabla^\Sigma \log H, \nu)+ \frac{1}{2 }(\tr k)^2    - \frac{3}{4} P^2- \frac{P}{H}( \nabla_\nu \tr k - \nabla_\nu k(\nu,\nu )) -  \frac{1}{2} |k|^2  -\frac{1}{2} |\mathring{B}|^2 -|J|$ and the same conclusion holds.
\end{remark}
Then with this result, we have the monotonicity of the foliation.
\begin{corollary}
    Let $(M,g,k)$ be an asymptotically flat  initial data set satisfying the dominant energy condition. If the initial data set possesses a foliation by  Hawking surfaces and   $\int_{\Sigma_{s}} f \, d\mu \leq 0$ for all the surfaces of the foliation, then the foliation is monotonically increasing.
\end{corollary}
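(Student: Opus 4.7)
The plan is to reduce the statement directly to Theorem~\ref{monotinic0} applied leaf-by-leaf. Since $\{\Sigma_s\}$ is a smooth foliation by Hawking surfaces, near any reference leaf $\Sigma_{s_0}$ the foliation can be represented as a one-parameter normal variation $F(\cdot, s): \Sigma_{s_0} \to M$ with velocity $\partial_s F = \alpha_s \nu_s$, where $\alpha_s$ is the lapse function of the foliation and $\nu_s$ is the outward unit normal. I would choose the parameter $s$ to increase as the leaves move outward, so that $\alpha_s \geq 0$ (positive generically, since distinct leaves of a foliation do not intersect).

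The verification then reduces to checking the three hypotheses of Theorem~\ref{monotinic0} at every leaf. First, $H_s > 0$ on each leaf: for the foliations of Theorem~\ref{exisfoli} (and Theorem~\ref{family}), the expansion $H = 2/r + \mathcal{O}(r^{-2})$ from Proposition~\ref{17EK} and Lemma~\ref{expansquafoli} gives this in the asymptotic range, and for a general on-center foliation we may restrict to the region where it holds. Second, the area monotonicity condition follows from the first variation of area,
\[
\tfrac{d}{ds}|\Sigma_s| = \int_{\Sigma_s} \alpha_s H_s \, d\mu,
\]
combined with the fact that an outward foliation has $|\Sigma_s|$ nondecreasing in $s$; hence $\int_{\Sigma_s}\alpha_s H_s\,d\mu \geq 0$. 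Third, $\int_{\Sigma_s} f\,d\mu \leq 0$ is the standing hypothesis of the corollary.

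With the three inputs in place, Theorem~\ref{monotinic0} gives $\tfrac{d}{ds}\mathcal{E}(\Sigma_s) \geq 0$ at every $s$, which is the claimed monotonicity. There is no substantive obstacle: the corollary is essentially a rephrasing of Theorem~\ref{monotinic0} in foliation language. The only mildly delicate point is fixing orientations and parameterization so that both $\alpha_s$ and $|\Sigma_s|$ are nondecreasing as $s$ increases; with the outward convention this is automatic. If instead one parameterizes in the opposite direction, the same argument shows $\mathcal{E}$ is nonincreasing in $s$, i.e.\ still nondecreasing as one moves outward.
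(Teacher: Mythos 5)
Your proposal is correct and is essentially the paper's own (implicit) argument: the corollary is Theorem~\ref{monotinic0} applied leaf-by-leaf, using that the outward lapse satisfies $\alpha_s\ge 0$ and $H_s>0$ on the leaves, so $\int_{\Sigma_s}\alpha_s H_s\,d\mu\ge 0$ holds at once. The only cosmetic remark is that your detour through ``$|\Sigma_s|$ is nondecreasing'' is redundant (indeed circular, since $\tfrac{d}{ds}|\Sigma_s|=\int_{\Sigma_s}\alpha_s H_s\,d\mu$ is exactly the quantity in question); the sign follows directly from $\alpha_s\ge0$ and $H_s>0$.
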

Note that  the condition $\int_{\Sigma_{s}} f \, d\mu \leq 0$ is not satisfied by every critical surface of the Hawking functional. However, as we will see next, this condition is quite general, as it holds for a large class of initial data sets.

\subsection*{Harmonic asymptotics}

An asymptotically flat initial data set \((M, g, k)\) has \emph{harmonic asymptotics} if, outside a compact set, the metric \( g \) and canonically conjugated momentum \( \pi \) can be expressed as:
\begin{equation}\label{metricharm}
g = u^4 \delta, \quad \pi = u^2 \mathcal{L} X,
\end{equation}
where \( u \) is a scalar function, \( X \) is a vector field, \( \delta \) is the Euclidean metric, and \( \mathcal{L} X \) is defined as:
\[
\mathcal{L} X = L_X \delta - \text{div}_\delta(X) \,\delta,
\]
with \( L_X \delta \) denoting the Lie derivative. Recall that \( \pi = k - \tr k \, g \). 

 Furthermore, the scalar function $u$ and the vector field $X$ then satisfy the asymptotics 
\begin{equation}\label{harmoasym}
    u(x) = 1 + \frac{m}{2|x|} + \mathcal{O} (|x|^{-2}) \quad \text{and} \quad X^i = -\frac{2p^i}{|x|} + \mathcal{O} (|x|^{-2})
\end{equation}
These are the so-called harmonic asymptotics. Substituting these expressions into the definition of \( \pi \), the leading-order term becomes:
\begin{equation}\label{momencor}
\pi = \frac{2}{|x|^2} \left( p \otimes \rho + \rho \otimes p - \delta( p, \rho) \,\delta \right) + \mathcal{O}(|x|^{-3}),
\end{equation}
where  $\rho = x/|x|$ is the radial direction. The ADM energy of the initial data set is given by the parameter $m$ (see Remark \ref{remarkm}), and its ADM momentum
\[
p_{ADM}(e_i) := \lim_{r \to \infty} \frac{1}{8\pi} \int_{S_r} \pi (e_i, \nu) \, d\mu_e,
\]
is given by $p$. Note that an asymptotically Schwarzschild metric has in particular harmonic asymptotics. 

 Initial data sets with harmonic asymptotics are among the most  significant in general relativity, since they are dense in the space of all solutions to the asymptotically flat constraint equations satisfying the dominant energy conditions. In Appendix \ref{appendix} we give precise statements of this result. Therefore, it is especially compelling to prove monotonicity within this class of initial data sets.

We assume that we have an initial data set with harmonic asymptotics that satisfies the dominant energy condition. Furthermore, assume that the manifold possesses a critical surface of the Hawking energy and that the surface can be put as graphs over a sphere centered at $r \xi$ with $\xi=O(\eta)$, for $\eta$ small, we will see later that $\eta$ must be smaller than the ADM momentum of the initial data set.  Such surfaces are present in the foliation constructed in Theorem \ref{exisfoli}. In particular, one can express the normal to the surfaces as $\nu= \rho +\mathcal{O}(\eta)$.

By (\ref{momencor}) and $ k= \pi - \frac{1}{2} \tr \pi \,g$, we see that the tensor $k$  has the form
\[
k = \frac{2}{|x|^2} (p \otimes \rho + \rho \otimes p -\frac{1}{2} \delta( p, \rho )\, \delta) + \mathcal{O} ( |x|^{-3}) 
\]
and we have the following quantities.
$$ \tr k = \frac{1}{|x|^2} \delta( p, \rho ) +\mathcal{O} (|x|^{-3}), \quad |k|^2 = \frac{4}{|x|^4} (2|p|^2 + \frac{3}{4} \delta( \rho, p )^2 ) +\mathcal{O} (|x|^{-5})$$
it also holds
$$\diver (\frac{2}{|x|^2} (p \otimes \rho + \rho \otimes p - \delta( p, \rho )\, \delta)) =0 $$
  and then 
$$|J|= \mathcal{O} (|x|^{-4}) $$
Note that since we are assuming  the dominant energy condition it holds  
$$\mathrm{Sc} + (\tr k)^2 - |k|^2= \mathrm{Sc} +\frac{1}{|x|^4}\delta( p,\rho )^2 - \frac{4}{|x|^4} (2|p|^2 + \frac{3}{4} \delta( \rho, p )^2 ) + \mathcal{O} ( |x|^{-5}) \geq 2 |J| $$
 Also taking $\nu= \rho +\mathcal{O}(\eta)$ to be the normal to the surface 
$$P=-\pi(\nu, \nu)=-\frac{2}{|x|^2} \delta( p, \rho ) +\mathcal{O} (\eta |x|^{-2})  $$
$$ \nabla_\nu \tr k = -\frac{2}{|x|^3} \delta( p, \rho ) + \mathcal{O} (\eta |x|^{-3}),\quad \nabla_\nu k(\nu, \nu)= -\frac{6}{|x|^3} \delta( p, \rho ) + \mathcal{O} ( \eta|x|^{-3}) $$
and 
$$\frac{P}{H}( \nabla_\nu \tr k - \nabla_\nu k(\nu,\nu )) = \frac{4}{|x|^4} \delta( p, \rho )^2 +  \mathcal{O} ( \eta |x|^{-4 }) .$$
Using the previous quantities and  taking $\eta \ll p$ then
\begin{equation}\label{monpiece}
\begin{split}
        f&=\left( \frac{P}{H}\right)^2|k|^2+ \frac{1}{2 }(\tr k)^2    - \frac{3}{4} P^2- \frac{P}{H}( \nabla_\nu \tr k - \nabla_\nu k(\nu,\nu )) -  \frac{1}{2} |k|^2  -\frac{1}{2} |\mathring{B}|^2 -|J| \\
        &\leq  \frac{1}{2 }(\tr k)^2 -  \frac{1}{2} |k|^2    - \frac{3}{4} P^2- \frac{P}{H}( \nabla_\nu \tr k - \nabla_\nu k(\nu,\nu )) +\mathcal{O} ( |x|^{-6})\\  
        &=   -\frac{4}{|x|^4}|p|^2 -\frac{8}{|x|^4}  \delta( p, \rho )^2 +\mathcal{O} (\eta |x|^{-4}) \leq 0    
\end{split}
\end{equation}
Then we have the following result, which implies the monotonicity of the Hawking energy along the foliation.
\begin{theorem}\label{monocosho}
    Let $(M,g,k)$ be an initial data set with harmonic  asymptotics, satisfying the dominant energy condition and with ADM linear momentum $p_{ADM} \neq 0$. Suppose \(\Sigma\) is a spherical Hawking surface with \(H>0\) that can be written as a graph over a coordinate sphere of radius \(r\) with center \(r\,\xi\), where \(\xi=O(\eta)\) and \(\eta\ll p_{\mathrm{ADM}}\). If \( F : \Sigma \times [0, \varepsilon) \rightarrow M \) is any variation with \( \frac{\partial F}{\partial s} \Big|_{s=0} = \alpha \nu \) and $\int_{\Sigma} \alpha H \, d\mu \geq 0,$ then 
\[
\frac{d}{ds} \mathcal{E}(F(\Sigma, s)) \geq 0.
\]
\end{theorem}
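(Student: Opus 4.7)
The strategy is to apply Theorem \ref{monotinic0} directly: that theorem reduces the desired monotonicity to the single integral inequality $\int_\Sigma f \, d\mu \leq 0$, so the entire argument amounts to verifying this condition on a Hawking surface of the type described. The heavy lifting, namely the pointwise expansion of $f$ under the harmonic asymptotic formulas for $g$ and $k$, is already carried out in the paragraphs preceding the theorem and culminates in the bound
\[
f \;\leq\; -\frac{4}{|x|^4}|p|^2 \;-\; \frac{8}{|x|^4}\delta(p,\rho)^2 \;+\; \mathcal{O}\!\left(\eta\,|x|^{-4}\right)
\]
displayed in (\ref{monpiece}). My task is therefore to transfer this bound to $\Sigma$ and integrate it.

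First I would record the uniform geometric estimates that let me apply (\ref{monpiece}) on $\Sigma$ rather than on a coordinate sphere. Because $\Sigma$ is a graph over $S_r(r\xi)$ with $|\xi|=\mathcal{O}(\eta)$, one has $|x|\asymp r$ on $\Sigma$, the outward unit normal satisfies $\nu = \rho + \mathcal{O}(\eta)$ uniformly, and $H=\tfrac{2}{r}+\mathcal{O}(r^{-2})$. These are precisely the inputs used in the derivation of (\ref{monpiece}) (together with $|J|=\mathcal{O}(|x|^{-4})$, which follows from the fact that the leading piece of $\pi$ in (\ref{momencor}) is divergence-free). Thus the pointwise inequality holds on $\Sigma$ itself, with the implicit constants depending only on the asymptotics and not on the particular surface in the family.

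Integrating over $\Sigma$, whose area is $4\pi r^2 + \mathcal{O}(1)$ and on which $|x|\asymp r$, the leading negative contribution is of order $-|p|^2 r^{-2}$ while the error term is $\mathcal{O}(\eta\, r^{-2})$. Under the hypothesis $\eta \ll |p_{\mathrm{ADM}}| = |p|$, the negative leading term dominates and we obtain $\int_\Sigma f \, d\mu \leq 0$. Theorem \ref{monotinic0} then applies and yields $\frac{d}{ds}\mathcal{E}(F(\Sigma,s))|_{s=0} \geq 0$ for every variation with $\int_\Sigma \alpha H\, d\mu \geq 0$, which is the claim. The only delicate point, and hence the main obstacle, lies precisely in the interplay between $\eta$ and $p$: the sign-definite piece $-4|p|^2/|x|^4 - 8\delta(p,\rho)^2/|x|^4$ is what drives the inequality, so without the standing assumptions $p_{\mathrm{ADM}}\neq 0$ and $\eta \ll |p_{\mathrm{ADM}}|$ the error $\mathcal{O}(\eta|x|^{-4})$ could swamp the leading term and the integral condition of Theorem \ref{monotinic0} would no longer be available.
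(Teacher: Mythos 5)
Your proposal is correct and follows essentially the same route as the paper: the paper's own argument consists precisely of the harmonic-asymptotics expansion culminating in the pointwise bound (\ref{monpiece}), which gives $f\le 0$ on $\Sigma$ (hence $\int_\Sigma f\,d\mu\le 0$) for $\eta\ll |p_{\mathrm{ADM}}|$, followed by an application of Theorem \ref{monotinic0}. Your additional remarks about transferring the bound to $\Sigma$ and integrating are consistent with how the paper's computation is set up (the expansion is carried out with $\nu=\rho+\mathcal{O}(\eta)$ on the surface itself), so there is no substantive difference.
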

\begin{remark}
Since initial data sets with harmonic asymptotics are dense among solutions of the Einstein constraint equations under the dominant energy condition (see Theorem~\ref{densitytheo} and Appendix~\ref{appendix}), it is tempting to prove monotonicity by approximation: verify the sign of the integrand for the harmonic model and pass to the limit. Recall that
\[
f(k,\nabla k)
:=\Bigl(\frac{P}{H}\Bigr)^{\!2}|k|^2
+\tfrac12(\tr k)^2
-\tfrac34 P^2
-\frac{P}{H}\bigl(\nabla_\nu \tr k-\nabla_\nu k(\nu,\nu)\bigr)
-\tfrac12|k|^2
-\tfrac12|\mathring B|^2
-|J|.
\]
If \(f\le0\) on \(\Sigma\), then the Hawking energy is nondecreasing along the foliation. In \eqref{monpiece} we showed that for harmonically asymptotic data one indeed has \(f<0\).

Let us explain why density alone does not propagate this sign. By Theorem~\ref{densitytheo} and the weighted Sobolev embedding(Theorem~\ref{sobolevemb}, using the decays \(k=\mathcal O(|x|^{-2})\) and \(g=\mathcal O(|x|^{-1})\) and taking \(p\) large), for every \(\varepsilon>0\) and every \(q\in(1/2,1)\) there exists \((\tilde g_{\varepsilon,q},\tilde k_{\varepsilon,q})\) with harmonic asymptotics such that
\begin{equation}\label{decayreg}
\|g-\tilde g_{\varepsilon,q}\|_{\mathcal C^{2,\alpha}_{-q}}<\varepsilon,
\qquad
\|k-\tilde k_{\varepsilon,q}\|_{\mathcal C^{1,\alpha}_{-1-q}}<\varepsilon,
\end{equation}
and \(|E-\tilde E_{\varepsilon,q}|<\varepsilon\), \(|p_{ADM}-\tilde p_{\varepsilon,q,ADM}|<\varepsilon\). From \eqref{decayreg} one obtains, for some \(C>0\),
\[
\bigl|f(k,\nabla k)-f(\tilde k_{\varepsilon,q},\nabla\tilde k_{\varepsilon,q})\bigr|
\;\le\; \frac{C\varepsilon}{|x|^{3+q}}
\qquad\text{for } \tfrac12<q<1.
\]
Assuming for simplicity that \(\nu=\rho+\mathcal{O}(|x|^{-1})\), the harmonic expansion \eqref{monpiece} yields
\begin{equation}
    \begin{split}
        f(k,\nabla k) &=f(\tilde{k}_{\epsilon,q},\nabla \tilde{k}_{\epsilon,q})  + \mathcal{O}(\frac{  \epsilon}{|x|^{3+q}})\\
        &=  -\frac{4}{|x|^4}|p_{\epsilon,q}|^2 -\frac{8}{|x|^4}  \delta( p_{\epsilon,q}, \rho )^2 -\frac{1}{2} |\mathring{B}|^2 +\mathcal{O}_{\epsilon,q} ( |x|^{-5}) + \mathcal{O}(\frac{  \epsilon}{|x|^{3+q}})
    \end{split}
\end{equation}
Here \(\mathcal O_{\varepsilon,q}\) indicates dependence of the constant on \(\varepsilon,q\). The leading negative contribution is of order \(|x|^{-4}\), whereas the approximation error is of order \(\varepsilon\,|x|^{-3-q}\), which is \emph{larger} in magnitude for \(q<1\); hence it can dominate the negative term. Moreover, trying to suppress this error by taking \(\varepsilon\) small and \(q\to1^{-}\) may enlarge the hidden constant in \(\mathcal O_{\varepsilon,q}(|x|^{-5})\), potentially promoting it to an \(\mathcal O(|x|^{-4})\) contribution. One could further expand the term   $f(\tilde{k}_{\epsilon,q},\nabla \tilde{k}_{\epsilon,q})$ to make the next term of the expansion explicit, but one would have the same problem with the error term of the new expansion. Consequently, the sign of \(f(k,\nabla k)\) cannot be concluded from density alone, and a separate monotonicity argument is required.
\end{remark}
\subsection*{York asymptotics}
In the early development of the conformal method for constructing solutions to the Einstein constraint equations, York studied the asymptotically flat case in \cite{york1979kinematics} and proposed the following model for the extrinsic curvature:
\[
k = \frac{3}{2|x|^2} \left( \rho \otimes p + p \otimes \rho - \delta( p, \rho ) (\delta - \rho \otimes \rho) \right) + \mathcal{O}(|x|^{-3}),
\]
where \( p \in \mathbb{R}^3 \) is a fixed vector representing the ADM linear momentum and \( \rho = \frac{x}{r} \) is the radial direction.

This model was derived under two key conditions:
\begin{enumerate}
    \item The ADM linear momentum of the manifold is given by \( p \).
    \item The tensor \( k \) is asymptotically maximal, meaning \( \operatorname{tr} k = \mathcal{O}(|x|^{-3}) \).
\end{enumerate}
Both conditions are satisfied, as the leading term of \( k \) is trace-free with respect to both the Euclidean and Schwarzschild metrics. 
\begin{remark}
    When deriving this model in Chapter 6 of York's paper \cite{york1979kinematics}, the transverse-traceless (TT) part of the tensor \( A \), denoted by \( A^* \), is not explicitly determined during the construction of the model for asymptotically flat manifolds. Instead, York focuses on the decomposition of \( A \) into its TT part and a divergence-free component derived from the vector potential \( W \):
\[
A_{ij} = A^*_{ij} + (LW)_{ij},
\]
where \( (LW)_{ij} = \nabla_i W_j + \nabla_j W_i - \frac{2}{3} g_{ij} \nabla^k W_k \).

The TT part \( A^*_{ij} \) is implicitly treated as prescribed data that satisfies the following conditions:
\[
\nabla^j A^*_{ij} = 0 \quad \text{and} \quad g^{ij} A^*_{ij} = 0,
\]
ensuring it is both divergence-free and trace-free. This part of the decomposition must also adhere to the asymptotic conditions required for asymptotically flat spacetimes, such as appropriate fall-off rates at spatial infinity.

York does not provide an explicit method for fixing \( A^*_{ij} \) within this chapter, as its determination depends on the specific context or physical scenario being modeled. Instead, the main focus lies on solving for the scalar \( \phi \) (the conformal factor) and the vector \( W \), while \( A^* \) acts as a prescribed input satisfying the necessary divergence-free and asymptotic conditions.
\end{remark}
We assume  an initial data set $(M,g,k)$ with $g=\delta + \mathcal{O}(|x|^{-1}) $, $k$ with York asymptotics  and  which satisfies the dominant energy condition. Furthermore, assume as before that the manifold possesses a critical surface of the Hawking energy, and that the surface can be put as graphs over a sphere centered at $r \xi$ with $\xi=O(\eta)$, for $\eta$ small. In particular one can express the normal to the surfaces as $\nu= \rho +O(\eta)$.

Then we have 
$$\tr k = \mathcal{O}(|x|^{-3}), \quad \pi = k +\mathcal{O}(|x|^{-3})  $$
also 
$$ \diver (\frac{3}{2|x|^2} \left( \rho \otimes p + p \otimes \rho - \delta( p, \rho ) (\delta - \rho \otimes \rho) \right))=0$$
then
$$|J|= | \diver (\pi)|= \mathcal{O} ( |x|^{-4})$$
and
$$|k|^2= \frac{9}{2 |x|^4} (|p|^2 +2 \delta( p, \rho )^2) + \mathcal{O} (|x|^{-5}).$$
Note that  the dominant energy condition  is given by 
$$\mathrm{Sc} + (\tr k)^2 - |k|^2= \mathrm{Sc}- \frac{9}{2 |x|^4} (|p|^2 +2 \delta( p, \rho )^2) + \mathcal{O} (|x|^{-5}) \geq  2|J|. $$
 Also taking $\nu= \rho +O(\eta)$ to be the normal to the surface  we have 
$$P=-k(\nu,\nu)+ \mathcal{O}(|x|^{-3})=-\frac{3}{|x|^2} \delta( p, \rho ) +  \mathcal{O}(\eta |x|^{-2})$$
$$ \nabla_\nu k(\nu,\nu ) =\nabla_\rho k(\rho,\rho ) +  \mathcal{O}(\eta |x|^{-2})= -\frac{6}{|x|^3}  \delta( p, \rho ) +  \mathcal{O}(\eta|x|^{-3}) $$
and 
$$ \frac{P}{H}( \nabla_\nu \tr k - \nabla_\nu k(\nu,\nu )) =- \frac{9}{|x|^4} \delta( p, \rho )^2 +  \mathcal{O}(\eta|x|^{-4})$$
Using the previous quantities and  taking $\eta \ll p$ then
\begin{equation}
\begin{split}
        f&= \left( \frac{P}{H}\right)^2|k|^2+ \frac{1}{2 }(\tr k)^2    - \frac{3}{4} P^2- \frac{P}{H}( \nabla_\nu \tr k - \nabla_\nu k(\nu,\nu )) -  \frac{1}{2} |k|^2  -\frac{1}{2} |\mathring{B}|^2 -|J| \\
        &\leq   -  \frac{1}{2} |k|^2    - \frac{3}{4} P^2- \frac{P}{H}( \nabla_\nu \tr k - \nabla_\nu k(\nu,\nu )) +\mathcal{O} ( |x|^{-6})\\  
        &=   -\frac{9}{4|x|^4}(|p|^2 + \delta( p, \rho )^2 ) +\mathcal{O} (\eta |x|^{-4}) \leq 0    
\end{split}
\end{equation}
and then we have the following result similar to Theorem \ref{monocosho}  which implies the monotonicity of the Hawking energy along the foliation in the case of York asymptotics.
\begin{theorem}\label{monoyork}
    Let $(M,g,k)$ be an initial data set satisfying the dominant energy condition,  with $g=\delta + \mathcal{O}(|x|^{-1}) $, $k$ with York asymptotics and with ADM linear momentum $p_{ADM} \neq 0$. If \( \Sigma \) is a spherical Hawking surface  with \( H > 0 \) and which can be put as a graph over a sphere centered at $r \xi$ with $\xi=O(\eta)$, and $\eta \ll p_{ADM}$. Then if \( F : \Sigma \times [0, \varepsilon) \rightarrow M \) is a variation with initial velocity \( \frac{\partial F}{\partial s} \Big|_{s=0} = \alpha \nu \) and $\int_{\Sigma} \alpha H \, d\mu \geq 0,$ then 
$$ \frac{d}{ds} \mathcal{E} (F(\Sigma, s)) \geq 0. $$
\end{theorem}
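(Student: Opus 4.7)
The plan is to reduce monotonicity to the pointwise sign of the integrand $f$ by invoking Theorem~\ref{monotinic0}: once we verify $\int_\Sigma f\,d\mu \leq 0$, the hypothesis $\int_\Sigma \alpha H\,d\mu \geq 0$ together with $H>0$ is enough to conclude $\frac{d}{ds}\mathcal{E}(F(\Sigma,s))|_{s=0}\geq 0$. Because the York ansatz fixes the leading asymptotics of $k$ and its derivatives in terms of the ADM momentum $p$, and because $\Sigma$ lies at radius $r$ with $\nu = \rho + \mathcal{O}(\eta)$, every term of $f$ admits an explicit leading expansion. This is precisely the calculation carried out in the paragraphs immediately preceding the theorem statement, and the proof simply packages those expansions into the hypothesis of Theorem~\ref{monotinic0}.

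First I would collect the York expansions already derived: $\tr k = \mathcal{O}(|x|^{-3})$ from asymptotic maximality, so $\tfrac{1}{2}(\tr k)^2 = \mathcal{O}(|x|^{-6})$ is strictly subleading; $|k|^2 = \tfrac{9}{2|x|^4}(|p|^2 + 2\,\delta(p,\rho)^2) + \mathcal{O}(|x|^{-5})$; $P = -k(\nu,\nu) + \mathcal{O}(|x|^{-3}) = -\tfrac{3}{|x|^2}\delta(p,\rho) + \mathcal{O}(\eta|x|^{-2})$; and $\nabla_\nu k(\nu,\nu) = -\tfrac{6}{|x|^3}\delta(p,\rho) + \mathcal{O}(\eta|x|^{-3})$. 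The leading York tensor is divergence-free by construction, so $|J| = \mathcal{O}(|x|^{-4})$, and it enters $f$ with the favorable sign $-|J|$.

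Next I would substitute into
\[
f = \Bigl(\tfrac{P}{H}\Bigr)^{2}|k|^2 + \tfrac{1}{2}(\tr k)^2 - \tfrac{3}{4}P^2 - \tfrac{P}{H}\bigl(\nabla_\nu \tr k - \nabla_\nu k(\nu,\nu)\bigr) - \tfrac{1}{2}|k|^2 - \tfrac{1}{2}|\mathring{B}|^2 - |J|,
\]
discard the manifestly nonpositive pieces $-\tfrac{1}{2}|\mathring{B}|^2$ and $-|J|$, and use $H = \tfrac{2}{r}+\mathcal{O}(r^{-2})$ to evaluate $\tfrac{P}{H}\bigl(\nabla_\nu \tr k - \nabla_\nu k(\nu,\nu)\bigr) = -\tfrac{9}{|x|^4}\delta(p,\rho)^2 + \mathcal{O}(\eta|x|^{-4})$. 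Collecting leading $|x|^{-4}$ contributions yields
\[
f \leq -\tfrac{9}{4|x|^4}\bigl(|p|^2 + \delta(p,\rho)^2\bigr) + \mathcal{O}(\eta|x|^{-4}),
\]
which is strictly negative pointwise for $\eta$ small compared to $|p_{ADM}|$. Integration and Theorem~\ref{monotinic0} then close the argument, exactly mirroring the structure of the proof of Theorem~\ref{monocosho} in the harmonic case.

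The main obstacle is keeping the $\eta$-error strictly smaller than the leading negative term uniformly on $\Sigma$: every perturbation entering through $\nu = \rho + \mathcal{O}(\eta)$ contributes at order $\eta|x|^{-4}$, while the negative leading term is of size $|x|^{-4}|p|^2$, so absorbing the error only requires $\eta < c|p_{ADM}|$ for an explicit universal constant $c$. A secondary subtlety worth flagging is that the sign of $f$ depends crucially on the asymptotic maximality of York data: if $\tr k$ were only $\mathcal{O}(|x|^{-2})$, the positive term $\tfrac{1}{2}(\tr k)^2$ could compete at order $|x|^{-4}$ and spoil the estimate; the York form precisely avoids this, which is why this asymptotic class, like the harmonic one, is well suited to the monotonicity argument.
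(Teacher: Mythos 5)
Your proposal is correct and follows essentially the same route as the paper: expand each term of $f$ using the York ansatz with $\nu=\rho+\mathcal{O}(\eta)$, drop the nonpositive pieces $-\tfrac12|\mathring B|^2$, $-|J|$ and the $\mathcal{O}(|x|^{-6})$ terms, obtain $f\le -\tfrac{9}{4|x|^4}\bigl(|p|^2+\delta(p,\rho)^2\bigr)+\mathcal{O}(\eta|x|^{-4})\le 0$ for $\eta\ll|p_{ADM}|$, and conclude via Theorem~\ref{monotinic0}. This is precisely the computation the paper carries out in the paragraphs preceding the theorem.
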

\subsection{Rigidity on the foliation}

In \cite{rigidiaz}, the following rigidity result for the Wilmore foliation was obtained.
\begin{theorem} [{\cite[Theorem 2.16]{rigidiaz}}]  \label{rigidityfoli}
    Let \((M, g)\) be a $3$-dimensional asymptotically flat  Riemannian manifold with nonnegative  scalar curvature. Then the Hawking  and  Brown-York energies of all the  Willmore surfaces of the canonical Willmore foliation  are positive unless $(M,g)$ is isometric to Euclidean space.
\end{theorem}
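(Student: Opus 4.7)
The plan is to specialize the positivity–and–rigidity result of Theorem \ref{positivity0} (from \cite{rigidiaz}) to the time-symmetric case $k=0$ for the Hawking functional, to invoke Shi-Tam's positive mass theorem with boundary for the Brown-York functional, and then to upgrade the resulting local rigidity on a single leaf to the statement that all of $(M,g)$ is Euclidean. For positivity, observe that when $k=0$ one has $P=\tr k=|J|=0$, so $f$ collapses to $-\tfrac{1}{2}|\mathring B|^2$ and the integral hypothesis $\int_\Sigma f-\lambda\,d\mu\le 0$ becomes $-\tfrac{1}{2}\int_\Sigma|\mathring B|^2\,d\mu-\lambda|\Sigma|\le 0$. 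On the canonical Willmore foliation this is immediate and strict, because its construction (together with Lemma \ref{expansquafoli}) forces $\lambda(\Sigma_s)>0$ on every leaf. Thus Theorem \ref{positivity0} yields $\mathcal E(\Sigma_s)>0$ on each leaf. For the Brown-York functional, each leaf is a small graphical perturbation of a large coordinate sphere, hence has positive Gauss and mean curvature; Shi-Tam's theorem then gives $\mathcal E_{BY}(\Sigma_s)\ge 0$, with equality only if the enclosed domain $\Omega_s$ is isometric to a Euclidean domain.

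Suppose next that some leaf $\Sigma_0$ satisfies $\mathcal E(\Sigma_0)\le 0$ (respectively $\mathcal E_{BY}(\Sigma_0)\le 0$). By positivity, equality holds, i.e.\ $\int_{\Sigma_0}H^2\,d\mu=16\pi$. In the Hawking case the rigidity clause of Theorem \ref{positivity0}, applied with any fixed $\beta\in[0,\tfrac{1}{2})$ (the required inequality $\int_{\Sigma_0} f_\beta-\lambda\,d\mu\le 0$ remains valid since $\lambda>0$), identifies the compact region $\Omega_0$ bounded by $\Sigma_0$ with a bounded Euclidean domain and $\Sigma_0$ with a round umbilic sphere of some radius $R$ and constant $H\equiv 2/R$; in particular $\mathrm{Sc}\equiv 0$ on $\Sigma_0$. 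In the Brown-York case the same conclusion follows directly from the rigidity clause of Shi-Tam. Either way the compact interior region $\Omega_0$ (which contains the interior region $K$ from Definition \ref{asymptoticsc}) is flat.

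It then remains to upgrade this interior flatness to $(M,g)\cong(\mathbb R^3,\delta)$. The exterior piece $M_{\mathrm{ext}}=M\setminus\mathrm{int}(\Omega_0)$ is asymptotically flat with nonnegative scalar curvature and boundary $\Sigma_0$, a round sphere of radius $R$ with $H\equiv 2/R$ and positive Gauss curvature. Since $\Sigma_0$ embeds isometrically into $\mathbb R^3$ as a round sphere with matching mean curvature, its Brown-York mass vanishes. Shi-Tam's positive mass theorem with boundary then yields $m_{\mathrm{ADM}}\ge m_{BY}(\Sigma_0)=0$, with saturation forcing $M_{\mathrm{ext}}\cong\mathbb R^3\setminus B_R$; the Euclidean gluing with $\Omega_0\cong B_R$ then delivers $(M,g)\cong(\mathbb R^3,\delta)$. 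The \emph{main obstacle} is to force equality in Shi-Tam, i.e.\ to show $m_{\mathrm{ADM}}=0$: the Lamm–Metzger–Schulze monotonicity \cite{willflat} of the Hawking energy along the Willmore foliation gives $m_{\mathrm{ADM}}=\lim_{s\to\infty}\mathcal E(\Sigma_s)\ge\mathcal E(\Sigma_0)=0$, but upgrading this to equality requires propagating the Christodoulou–Yau equality case through neighbouring leaves and verifying that the glued manifold satisfies $\mathrm{Sc}\ge 0$ across $\Sigma_0$ in a sense strong enough to invoke PMT rigidity.
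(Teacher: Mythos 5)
Your proposal is fine up to and including the interior step (positivity on the leaves via Theorem \ref{positivity0} with $k=0$ and $\lambda>0$, and the identification of the enclosed region $\Omega_0$ with a flat ball bounded by a round umbilic sphere), but the final step is a genuine gap, and you have flagged it yourself: you need $m_{\mathrm{ADM}}=0$ to force equality in your exterior argument, and monotonicity along the foliation only gives $m_{\mathrm{ADM}}\ge \mathcal E(\Sigma_0)=0$, which is no more than the positive mass theorem. Moreover, the inequality you invoke, $m_{\mathrm{ADM}}\ge m_{BY}(\Sigma_0)$ with a rigidity clause for the \emph{exterior} region, is not the literal Shi--Tam theorem (which concerns compact fill-ins of a convex surface), so even that comparison would require a separate justification. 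As written, the argument cannot conclude that $M\setminus\Omega_0$ is flat, which is exactly the content that distinguishes the rigidity statement from mere nonnegativity.

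The paper (following \cite{rigidiaz}, and as carried out explicitly in the dynamical analogue, Theorem \ref{rigigeneralfoli}) closes this step by a different mechanism that avoids any a priori knowledge of the ADM mass: since the interior rigidity gives $\mathring B=0$, $H\equiv 2/r$, and vanishing of the relevant curvature data along $\Sigma_0$, one can glue a flat round ball $B_r(0)$ to $M\setminus\Omega_0$ across $\Sigma_0$, obtaining a complete $\mathcal C^2$ asymptotically flat manifold with nonnegative scalar curvature in which $\Sigma_0$ bounds a Euclidean ball realizing the sharp isoperimetric ratio; Shi's isoperimetric rigidity (Theorem \ref{isoperig}, \cite{shi2016isoperimetric}) then forces the glued manifold to be isometric to $\mathbb R^3$, hence the exterior is flat and $(M,g)\cong(\mathbb R^3,\delta)$. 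So the missing ingredient in your write-up is precisely this gluing-plus-isoperimetric-rigidity argument (together with the fact that the leaf is isoperimetric), rather than an equality case of a Shi--Tam type mass comparison; if you want to keep your route, you would have to prove $m_{\mathrm{ADM}}=0$ independently, which is not supplied by the Lamm--Metzger--Schulze monotonicity \cite{willflat}.
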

This  result relies on the following result of Shi  for isoperimetric surfaces.
\begin{theorem}[{\cite[Theorem 3]{shi2016isoperimetric}}]\label{isoperig}
    Suppose \((M, g)\) is a complete  asymptotically flat manifold with nonnegative scalar curvature. Then for any  \(V > 0\),
\begin{equation}
I(V) \leq (36\pi)^{\frac{1}{3}} V^{\frac{2}{3}}. 
\end{equation}
There exists a  \(V_0 > 0\) such that
\begin{equation}
I(V_0) = (36\pi)^{\frac{1}{3}} V_0^{\frac{2}{3}} 
\end{equation}
if and only if \((M, g)\) is isometric to \(\mathbb{R}^3\). Here
\[
I(v) = \inf \left\{ \mathcal{H}^2(\partial^* \Omega) : \Omega \subset M \text{ is a Borel set with finite perimeter, and } \mathcal{L}^3(\Omega) = v \right\},
\]
is the isoperimetric profile, where \(\mathcal{H}^2\) is the 2-dimensional Hausdorff measure of the reduced boundary \(\partial^* \Omega\), and \(\mathcal{L}^3(\Omega)\) is the Lebesgue measure of \(\Omega\) with respect to the metric \(g\).
\end{theorem}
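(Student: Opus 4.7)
The strategy has two separable parts: (i) the inequality \(I(V) \le (36\pi)^{1/3} V^{2/3}\) for all \(V>0\), and (ii) the rigidity characterization of equality. The two parts use quite different ingredients: part (i) needs only the asymptotic geometry, whereas part (ii) is where the scalar curvature hypothesis and the positive mass theorem enter.

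For part (i), the plan is to exploit the asymptotic structure at infinity: although \((M,g)\) may carry positive mass, the metric approaches the Euclidean one outside any compact set, so the Euclidean isoperimetric profile can be approximated arbitrarily closely by placing a comparison region far enough out. Concretely, fix \(V>0\) and, in the asymptotic chart, consider Euclidean balls \(B_\rho(p)\) with \(|p|\) large and \(\rho\) chosen so that the \(g\)-volume of \(B_\rho(p)\) equals \(V\); by asymptotic flatness and the intermediate value theorem such a \(\rho\) exists for \(|p|\) large, and \(\rho \to (3V/(4\pi))^{1/3}\) as \(|p|\to\infty\). The \(g\)-area then satisfies \(\mathcal{H}^2(\partial B_\rho(p)) \to 4\pi \rho^2 = (36\pi)^{1/3} V^{2/3}\), using only \(g = \delta + o(1)\) at infinity; no scalar curvature assumption is needed here. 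Taking the infimum over such candidate regions yields the claimed bound.

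For part (ii), assume \(I(V_0) = (36\pi)^{1/3} V_0^{2/3}\) and argue that \((M,g) \cong \mathbb{R}^3\). First, invoke the existence theory for isoperimetric minimizers in asymptotically flat manifolds with \(\mathrm{Sc}\ge 0\) (Eichmair--Metzger) to obtain a smooth region \(\Omega_0\) of volume \(V_0\) whose boundary \(\Sigma_0\) has area \(I(V_0)\) and is a CMC sphere; a preliminary step is to rule out that the infimum is attained only ``at infinity''. Second, compute \(m_H(\Sigma_0)\) directly: the CMC condition gives \(H\equiv H_0\), and the first variation formula applied to the function \(f(V):=I(V)-(36\pi)^{1/3}V^{2/3}\) (which satisfies \(f\le 0\) with equality at \(V_0\)) identifies \(H_0 = I'(V_0) = \tfrac{2}{3}(36\pi)^{1/3} V_0^{-1/3}\); together with \(|\Sigma_0|=(36\pi)^{1/3}V_0^{2/3}\) this gives \(\int_{\Sigma_0} H^2\, d\mu = |\Sigma_0|\, H_0^2 = 16\pi\), hence \(m_H(\Sigma_0)=0\). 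Third, run the weak IMCF of Huisken--Ilmanen from \(\Sigma_0\): Geroch monotonicity gives \(0 = m_H(\Sigma_0) \le m_H(\Sigma_t) \le m_{\mathrm{ADM}}\), and the rigidity clause of the Geroch formula forces \(\mathrm{Sc}\equiv 0\), \(\Sigma_t\) umbilic, and \(\nabla^{\Sigma_t} H\equiv 0\) wherever strict monotonicity fails; combining this with the rigidity statement of the positive mass theorem delivers \(m_{\mathrm{ADM}}=0\) and a global isometry \((M,g)\cong \mathbb{R}^3\). The converse (equality on \(\mathbb{R}^3\)) is the classical Euclidean isoperimetric inequality.

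The main obstacle is the third step: weak IMCF can jump due to outward minimization, so one must carefully use that \(\Sigma_0\), as an isoperimetric boundary, is already outward minimizing in order to control the flow near \(t=0\), and then propagate the rigidity conclusion past possible later jumps. An alternative, and perhaps cleaner, route is to reformulate the theorem in terms of Huisken's \emph{isoperimetric mass} \(m_{\mathrm{iso}}(V) := \tfrac{2}{I(V)}\bigl(V - I(V)^{3/2}/\sqrt{36\pi}\bigr)\): the inequality then reads \(m_{\mathrm{iso}}(V)\ge 0\), and equality at some \(V_0\) combined with the identification \(\lim_{V\to\infty} m_{\mathrm{iso}}(V) = m_{\mathrm{ADM}}\) and suitable monotonicity forces \(m_{\mathrm{ADM}}=0\), whence \((M,g)=\mathbb{R}^3\) by the positive mass theorem, sidestepping the need to run IMCF from \(\Sigma_0\) explicitly.
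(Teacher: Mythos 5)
This statement is not proved in the paper at all: it is quoted as an external black box from Shi \cite{shi2016isoperimetric} and used only as an ingredient in the proof of Theorem \ref{rigigeneralfoli}, so there is no internal proof to compare with and your proposal has to be judged against Shi's actual argument. Your part (i) is fine and is indeed the standard soft argument: coordinate balls drifting to infinity, with the radius adjusted so that the $g$-volume equals $V$, have $g$-area converging to $(36\pi)^{1/3}V^{2/3}$, and no curvature hypothesis is needed there.

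The rigidity part, however, has genuine gaps. The step you defer as a ``preliminary'' --- ruling out that the infimum is attained only at infinity --- is precisely the crux in the equality case: the competitors from part (i) show that balls escaping to infinity realize the value $(36\pi)^{1/3}V_0^{2/3}$ in the limit, and generalized existence (Ritor\'e--Rosales, Nardulli) only yields a minimizer of some volume $V_1\le V_0$ plus volume lost to infinity; excluding $V_1<V_0$ is where the scalar curvature and ultimately the mass must enter (unconditional existence for all volumes under $\mathrm{Sc}\ge 0$ is due to Carlotto--Chodosh--Eichmair, not Eichmair--Metzger, and its proof belongs to the same circle of ideas, so it cannot be invoked as a harmless preliminary). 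Next, the identification $H_0=I'(V_0)=\tfrac{2}{3}(36\pi)^{1/3}V_0^{-1/3}$ is not justified: $I$ need not be differentiable at $V_0$, and the two facts available --- the squeeze $I'_+(V_0)\le H_0\le I'_-(V_0)$ from normal deformations of $\Omega_0$, and the one-sided bounds $I'_+(V_0)\le I_0'(V_0)\le I'_-(V_0)$ from $I\le I_0$ with equality at $V_0$, where $I_0(V)=(36\pi)^{1/3}V^{2/3}$ --- do not force $H_0=I_0'(V_0)$. Christodoulou--Yau stability gives only $\int_{\Sigma_0}H^2\,d\mu\le 16\pi$, i.e.\ $m_H(\Sigma_0)\ge 0$; the reverse inequality, which is what you need for $m_H(\Sigma_0)=0$, is exactly the missing content.

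Most seriously, even granting $m_H(\Sigma_0)=0$, your third step proves nothing: Geroch monotonicity gives $0=m_H(\Sigma_0)\le m_H(\Sigma_t)\to m_{\mathrm{ADM}}$, which is perfectly consistent with $m_{\mathrm{ADM}}>0$, and the rigidity clause of the Geroch computation is only available if the Hawking mass is \emph{constant} along the flow, which you have not shown. In Schwarzschild of mass $m>0$ there are many surfaces with vanishing Hawking mass, and weak IMCF started from them simply increases the Hawking mass towards $m$; so ``one surface with $m_H=0$ plus $\mathrm{Sc}\ge 0$ plus the positive mass theorem'' cannot imply flatness. The same objection applies to your alternative route: $m_{\mathrm{iso}}(V_0)=0$ together with $\lim_{V\to\infty}m_{\mathrm{iso}}(V)=m_{\mathrm{ADM}}\ge 0$ yields no contradiction unless one has monotonicity of $m_{\mathrm{iso}}$ in the \emph{decreasing} direction, for which no result is available. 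A correct proof must exploit the isoperimetric equality globally --- for instance by propagating it to large volumes, where the profile detects the ADM mass, before applying the positive mass theorem --- rather than relying on the local data of the single surface $\Sigma_0$.
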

We now state a dynamical generalization of Theorem \ref{rigidityfoli}. Unlike our previous foliation theorems, this result does \emph{not} assume an asymptotically Schwarzschild end; it holds on any asymptotically flat initial data set admitting a foliation by Hawking spheres.
\begin{theorem}\label{rigigeneralfoli}
   Let $(M,g,k)$ be an asymptotically flat  initial data set satisfying the dominant energy condition. If the initial data set possesses an on-center foliation by Hawking surfaces   and one  surface $\Sigma$ of the foliation satisfies:

    \begin{enumerate}[label=(\roman*)]
    
    \item The Hawking energy of the surface is zero.
    
    \item Outside of $\Sigma$  $(M,g)$ has nonnegative scalar curvature. 

    \item There exists a constant $0 \leq\beta <\frac{1}{2}$ such that $\int_\Sigma f_\beta -\lambda \, d\mu \leq 0$  for 
 \begin{equation}
       f_\beta:= \left( \frac{P}{H}\right)^2|k|^2+ \frac{1}{2 }(\tr k)^2   - \frac{3}{4} P^2- \frac{P}{H}( \nabla_\nu \tr k - \nabla_\nu k(\nu,\nu ))  -  \beta( |k|^2 + |\mathring{B}|^2 +2|J|).
    \end{equation}
\end{enumerate}    
Then $(M,g,k)$ is isometric to a spacelike hypersurface in  Minkowski spacetime.
\end{theorem}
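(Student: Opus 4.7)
The strategy proceeds in three stages: interior rigidity via Theorem~\ref{positivity0}, a gluing construction with a standard Euclidean ball, and a final application of Shi's isoperimetric theorem (Theorem~\ref{isoperig}) together with the rigidity of the spacetime positive mass theorem.

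First, since the foliation is on-center, $\Sigma$ bounds a relatively compact domain $\Omega \subset M$ with $K \subset \Omega$. The hypothesis $\mathcal{E}(\Sigma) = 0$ is equivalent to $\int_\Sigma (H^2 - P^2)\,d\mu = 16\pi$, and hypothesis (iii) supplies a $\beta \in [0, 1/2)$ with $\int_\Sigma (f_\beta - \lambda)\,d\mu \leq 0$. Applying the rigidity clause of Theorem~\ref{positivity0} yields that $(\Omega, g|_\Omega, k|_\Omega)$ is isometric to a spacelike hypersurface in Minkowski spacetime, $\Sigma$ is an umbilic round sphere of some radius $r_0 > 0$, and $k$ vanishes identically on $\Sigma$ (as a tensor on $M$ restricted to $\Sigma$).

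Next, I would replace $\Omega$ by a standard Euclidean ball $B_{r_0} \subset \mathbb{R}^3$ of radius $r_0$, forming an auxiliary initial data set $(\tilde M, \tilde g, \tilde k)$ with $\tilde k \equiv 0$ on $B_{r_0}$ and $(\tilde g, \tilde k) = (g, k)$ on $M \setminus \Omega$. The induced metrics on $\Sigma$ agree (round of radius $r_0$ from both sides) and the mean curvatures match at $2/r_0$, so by Miao-type corner results $\tilde g$ carries a well-defined distributional nonnegative scalar curvature, while $\tilde k$ is continuous across $\Sigma$ (zero on both sides). Hence $(\tilde M, \tilde g)$ is complete, asymptotically flat with $\mathrm{Sc}(\tilde g) \geq 0$, and contains the Euclidean ball $B_{r_0}$ saturating the isoperimetric profile $|\partial B_{r_0}| = (36\pi)^{1/3} |B_{r_0}|^{2/3}$. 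The rigidity of Theorem~\ref{isoperig} then forces $(\tilde M, \tilde g) \cong \mathbb{R}^3$, so $(M \setminus \Omega, g)$ is Riemannian-isometric to the standard exterior of $B_{r_0}$ in $\mathbb{R}^3$, and in particular $E_{\mathrm{ADM}}(\tilde g) = E_{\mathrm{ADM}}(g) = 0$.

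To conclude that $(M, g, k)$ is itself a spacelike slice in Minkowski, I would invoke the rigidity clause of the spacetime positive mass theorem (Eichmair--Huang--Lee--Schoen) applied to $(\tilde M, \tilde g, \tilde k)$: the dominant energy condition together with vanishing ADM energy implies that $(\tilde M, \tilde g, \tilde k)$ is a spacelike slice in Minkowski spacetime. Combined with the Minkowski realization of $\Omega$ produced in the first step, and the matching boundary data at $\Sigma$ ($\Sigma$ round umbilic, $k|_\Sigma = 0$), the two Minkowski embeddings fit together into a single one, so $(M, g, k)$ itself is a spacelike slice in Minkowski, as claimed.

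The main obstacle is the last step: applying the rigidity of the spacetime PMT to $(\tilde M, \tilde g, \tilde k)$, whose metric is only $C^{1,1}$ across the corner at $\Sigma$ and whose $\tilde k$ is only $C^0$ there. A smoothing/approximation argument in the spirit of Miao is needed to reduce to the smooth PMT rigidity in the limit, and one must also verify the compatibility of the two Minkowski embeddings across $\Sigma$, which should follow from $k|_\Sigma = 0$, the round umbilic geometry of $\Sigma$, and the Codazzi--Mainardi equations. Modulo these regularity and matching issues, the argument closes.
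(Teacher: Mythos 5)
Your proposal follows essentially the same route as the paper: interior rigidity via Theorem \ref{positivity0}, gluing a round ball onto $M\setminus\Omega$, Shi's isoperimetric rigidity (Theorem \ref{isoperig}) to flatten the exterior, and a positive-mass-theorem rigidity argument to get $k=0$ outside and assemble the Minkowski slice. The only substantive difference is at the corner: the paper notes that the Gauss equation in Minkowski together with $k|_\Sigma=0$ forces the full Riemann tensor of $g$ to vanish along $\Sigma$, so the glued metric is genuinely $\mathcal{C}^2$ and Theorem \ref{isoperig} applies directly, whereas your Miao-type $C^{1,1}$ gluing with only distributional nonnegative scalar curvature needs the smoothing argument you flag before Shi's theorem (as stated) and the PMT rigidity can be invoked.
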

\begin{proof}
Denoting by $\Omega$ interior of $\Sigma$, we can directly apply Theorem \ref{positivity0} obtaining that $\Omega$  is isometric to a spacelike hypersurface in Minkowski spacetime,  also we have that $\Sigma$ is a round sphere with $|k|_{\Sigma} =\lambda=|\mathring{B}|= \mathrm{Sc}_{| \Sigma} =\mathrm{Ric}_{| \Sigma}(\nu, \nu)=0$ and $H=\frac{2}{r^2}$ is constant, where $r$ is the area radius of $\Sigma$ (for more details see \cite[Theorem 3.6]{rigidiaz}). Then what it remains to see is that $M \setminus \Omega$ is also isometric to a hypersurface in Minkowski space.

Now by considering $\Omega$ as a spacelike hypersurface in Minkowski spacetime,  using that its second fundamental form  $ k$ vanishes at $\partial \Omega = \Sigma$, that the Riemann tensor in Minkowski is zero and the Gauss equation for the curvature tensor, we find that the Riemann curvature tensor $ \mathrm{Rm}^M=0$ vanishes along $\partial \Omega$.  Then with this and the previously mentioned properties of $\Sigma$, we can glue $ M \setminus \Omega$ to a round ball of radius $r$, with this we have an asymptotically flat $ \mathcal{C}^2$ manifold  $B_r(0)\cup (M \setminus \Omega) $ with nonnegative scalar curvature. As $\Sigma$ is an isoperimetric surface, we can apply Theorem \ref{isoperig}  obtaining that $B_r(0)\cup (M \setminus \Omega) $ is isometric to Euclidean space. In particular,  $M \setminus \Omega $ is isometric to Euclidean space minus a ball. By the positive mass theorem $M \setminus \Omega $ has zero ADM energy and ADM momentum, and it is direct to see that $k=0$, since the only hypersurfaces in Minkowski spacetime isometric to Euclidean space are hyperplanes.  With this, we have  $M$ isometric to a spacelike hypersurface in Minkowski spacetime with $k=0$ outside of $\Omega$. 
\end{proof}
\begin{remark}\label{remarkrigidi}
$i)$    Note that by the dominant energy condition, the condition $\mathrm{Sc} \geq 0$ outside of $\Sigma$ is implied if $\tr k=0$ outside of $\Sigma$.

$ii)$ We only require asymptotic flatness here, a strictly weaker condition than asymptotically Schwarzschild. In the latter setting, Theorem \ref{positivity} already shows that the Hawking energy along our foliation can vanish only if the ADM mass is zero.
\end{remark}
 Finally, we will see that the conditions of this theorem are satisfied by the two models we saw. 
 \begin{corollary}\label{rigiharmoyor}
    Let $(M,g,k)$ be an asymptotically flat initial data set satisfying the dominant energy condition, with $k$ having either harmonic or York asymptotics and with ADM linear momentum $p_{ADM} \neq 0$.   If the initial data set possesses a large-scale foliation by  Hawking surfaces, and each leaf of the foliation can be put as a graph over a sphere centered at $r \xi$ with $\xi=O(\eta)$ and $\eta \ll p_{ADM}$. Then the  Hawking energy along the foliation cannot be zero.
\end{corollary}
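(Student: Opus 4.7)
The plan is to argue by contradiction using Theorem~\ref{rigigeneralfoli}. Suppose some leaf $\Sigma$ of the foliation satisfies $\mathcal{E}(\Sigma)=0$. If we can verify the three conditions of that theorem, we conclude that $(M,g,k)$ is isometric to a spacelike hypersurface of Minkowski spacetime with $k\equiv 0$ outside the interior $\Omega$ of $\Sigma$. Since $p_{ADM}$ is computed from the asymptotic region (where $k\equiv 0$), this forces $p_{ADM}=0$, contradicting the hypothesis $p_{ADM}\neq 0$. Hypothesis (i) is the contradiction assumption, so the task is to verify (ii) and (iii).

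For condition (iii), I would repeat the pointwise harmonic/York expansions carried out in the proofs of Theorems~\ref{monocosho} and \ref{monoyork}, now applied to $f_\beta$ in place of $f$. Using the identity
\[
f_\beta - f = \bigl(\tfrac12-\beta\bigr)|k|^2 + \bigl(\tfrac12-\beta\bigr)|\mathring{B}|^2 + (1-2\beta)|J|
\]
together with the harmonic-asymptotic expansion $|k|^2 = (8|p|^2+3\langle p,\rho\rangle^2)/|x|^4+O(|x|^{-5})$ on the leaf (and the analogous York expression), the same bookkeeping of cancellations that yielded $f\le -4|p|^2/|x|^4+O(\eta|x|^{-4})$ produces $f_\beta \le -c(\beta)|p|^2/|x|^4 + O(\eta|x|^{-4})$ with $c(\beta)>0$, or at least $\int_\Sigma f_\beta\,d\mu<0$, for some $\beta\in(0,1/2)$. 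Here the hypothesis $\xi=O(\eta)$ with $\eta\ll|p_{ADM}|$ is crucial, keeping the leading negative coefficient dominant over the error terms. Combined with $\lambda>0$ (from the asymptotic form $\lambda\sim 2m/r^3$ of the Euler--Lagrange multiplier, derived exactly as in Lemma~\ref{expansquafoli}), this yields $\int_\Sigma(f_\beta-\lambda)\,d\mu<0$, so condition (iii) holds.

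Condition (ii), $\mathrm{Sc}\geq 0$ on $M\setminus\Omega$, follows in the asymptotic regime from the Hamiltonian constraint $\mathrm{Sc}=2\mu + |k|^2-(\tr k)^2$, the DEC $\mu\geq|J|\geq 0$, and the harmonic/York identity $|k|^2-(\tr k)^2 = (8|p|^2+2\langle p,\rho\rangle^2)/|x|^4+O(|x|^{-5})\geq 0$, giving $\mathrm{Sc}\geq 0$ outside a fixed compact set $K_0$. This is the main obstacle, because the conclusion is a \emph{global} statement on $M\setminus\Omega$ whereas the expansion only controls the asymptotic region. I would handle it either by restricting to leaves large enough that $\Omega\supset K_0$, so that $M\setminus\Omega$ lies entirely in the asymptotic region, or more cleanly by bypassing Theorem~\ref{rigigeneralfoli} in favor of the nonnegativity clause of Theorem~\ref{positivity0}: the strict estimate $\int_\Sigma(f-\lambda)\,d\mu<0$ already established in Theorems~\ref{monocosho}/\ref{monoyork} forces $\mathcal{E}(\Sigma)>0$ strictly, which contradicts $\mathcal{E}(\Sigma)=0$ directly without needing (ii) at all.
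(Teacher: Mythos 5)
Your overall strategy coincides with the paper's: assume some leaf $\Sigma$ has $\mathcal{E}(\Sigma)=0$, verify the hypotheses of Theorem \ref{rigigeneralfoli} by means of the harmonic/York expansions already worked out in the monotonicity section, and contradict $p_{ADM}\neq 0$ (the paper concludes via zero ADM energy, hence zero ADM momentum; your variant through $k\equiv 0$ outside $\Omega$ is an equally valid reading of the rigidity conclusion). Your handling of condition (iii) via $f_\beta-f=(\tfrac12-\beta)\bigl(|k|^2+|\mathring{B}|^2\bigr)+(1-2\beta)|J|$ and a choice of $\beta$ close to $\tfrac12$ is in fact more explicit than the paper, which only records the estimate for $f$; and your observation that condition (ii) need only be checked on $M\setminus\Omega$, which lies in the asymptotic region for large leaves, is exactly how the paper's appeal to the dominant energy condition (which with $p_{ADM}\neq0$ forces $\mathrm{Sc}>0$ there) is meant to be read.

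Two of your supporting claims, however, do not hold as stated. First, $\lambda>0$ cannot be obtained ``exactly as in Lemma \ref{expansquafoli}'': that expansion is proved for the leaves constructed in Theorem \ref{exisfoli} in an asymptotically Schwarzschild end, whereas the corollary assumes only asymptotic flatness and an arbitrary foliation by Hawking surfaces, so neither the sign nor the size of the Lagrange multiplier of a given leaf follows from it. Second, Theorems \ref{monocosho} and \ref{monoyork} establish $f\le 0$ pointwise (hence $\int_\Sigma f\,d\mu\le 0$), not the strict inequality $\int_\Sigma (f-\lambda)\,d\mu<0$; the $\lambda$-term never appears in those arguments because the monotonicity criterion of Theorem \ref{monotinic0} does not involve it. Hence your proposed bypass through the strict-positivity clause of Theorem \ref{positivity0} is not ``already established'' and still hinges on controlling $\lambda$ --- precisely what your first claim was meant to supply. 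To be fair, the paper's own proof verifies $f\le0$ and the positivity of $\mathrm{Sc}$ and then applies Theorem \ref{rigigeneralfoli}, leaving the $-\lambda$ contribution in its condition (iii) and the choice of $\beta$ implicit; but if you make these points explicit, you must either justify the required sign of $\lambda$ for the leaves under consideration or refrain from citing Lemma \ref{expansquafoli} and Theorems \ref{monocosho}--\ref{monoyork} for statements they do not contain.
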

\begin{proof}
Recall that $k$ has harmonic asymptotics if 
\[
k = \frac{2}{|x|^2} (p \otimes \rho + \rho \otimes p -\frac{1}{2} \delta( p, \rho ) \delta) + \mathcal{O} ( |x|^{-3}) 
\]
and that the   dominant energy condition tells us that  
$$\mathrm{Sc} + (\tr k)^2 - |k|^2= \mathrm{Sc} +\frac{1}{|x|^4}\delta( p,\rho )^2 - \frac{4}{|x|^4} (2|p|^2 + \frac{3}{4} \delta( \rho, p )^2 ) + \mathcal{O} ( |x|^{-5}) \geq 0 $$
which implies that if the ADM momentum of $k$ is nonzero then $\mathrm{Sc} >0$. Furthermore,  as seen when studying the monotonicity 
\begin{equation}
\begin{split}
        f&=\left( \frac{P}{H}\right)^2|k|^2+ \frac{1}{2 }(\tr k)^2    - \frac{3}{4} P^2- \frac{P}{H}( \nabla_\nu \tr k - \nabla_\nu k(\nu,\nu )) -  \frac{1}{2} |k|^2  -\frac{1}{2} |\mathring{B}|^2 -|J|\\
        &\leq  \frac{1}{2 }(\tr k)^2 -  \frac{1}{2} |k|^2    - \frac{3}{4} P^2- \frac{P}{H}( \nabla_\nu \tr k - \nabla_\nu k(\nu,\nu )) +\mathcal{O} ( |x|^{-6})\\  
        &=   -\frac{4}{|x|^4}|p|^2 -\frac{8}{|x|^4}  \delta( p, \rho )^2 +\mathcal{O} (\eta |x|^{-4}) \leq 0    
\end{split}
\end{equation}
Similarly recall that $k$ has York asymptotics if 
$$k = \frac{3}{2|x|^2} \left( \rho \otimes p + p \otimes \rho - \delta( p, \rho ) (\delta - \rho \otimes \rho) \right) +\mathcal{O}(|x|^{-3}) $$
and  the dominant energy condition  is given by 
$$\mathrm{Sc} + (\tr k)^2 - |k|^2= \mathrm{Sc}- \frac{9}{2 |x|^4} (|p|^2 +2 \delta( p, \rho )^2) + \mathcal{O} (|x|^{-5}) \geq  0. $$
This implies that if the ADM momentum of $k$ is nonzero then $\mathrm{Sc} >0$.
Also using that $\eta \ll p$ we have for the york asymptotics 
\begin{equation}
\begin{split}
        f&=\left( \frac{P}{H}\right)^2|k|^2+ \frac{1}{2 }(\tr k)^2    - \frac{3}{4} P^2- \frac{P}{H}( \nabla_\nu \tr k - \nabla_\nu k(\nu,\nu )) -  \frac{1}{2} |k|^2  -\frac{1}{2} |\mathring{B}|^2 -|J|\\
        &\leq   -  \frac{1}{2} |k|^2    - \frac{3}{4} P^2- \frac{P}{H}( \nabla_\nu \tr k - \nabla_\nu k(\nu,\nu )) +\mathcal{O} ( |x|^{-6})\\  
        &=   -\frac{9}{4|x|^4}(|p|^2 + \delta( p, \rho )^2 ) +\mathcal{O} (\eta |x|^{-4}) \leq 0    
\end{split}
\end{equation}
Then we can apply  Theorem \ref{rigigeneralfoli} for both cases and if the Hawking energy on a leaf of the foliation is zero then the initial data set is a hypersurface in Minkowski space implying that it has zero ADM energy and therefore zero ADM momentum, a contradiction.
\end{proof}

 \appendix

\section{Density of Harmonic asymptotics}\label{appendix}

 Initial data sets with harmonic asymptotics form a dense subset within the space of solutions to the asymptotically flat constraint equations. To state this property precisely, we will now recall some related results and definitions. 
\begin{definition}
    Let $B$ be a closed ball in $\mathbb{R}^n$ with center at the origin. For every $k \in \{0, 1, \dots\}$, $p \geq 1$, and $q \in \mathbb{R}$, we define the weighted Sobolev space $W^{k,p}_{-q}(\mathbb{R}^n \setminus B)$ as the collection of those $f \in W^{k,p}_{\text{loc}}(\mathbb{R}^n \setminus B)$ such that
\[
\| f \|_{W^{k,p}_{-q}(\mathbb{R}^n \setminus B)} :=
\left(
\int_{\mathbb{R}^n \setminus B}
\sum_{|I| \leq k}
\left( |\partial^I f(x)| |x|^{|I|+q} \right)^p |x|^{-n} \, dx
\right)^{\frac{1}{p}}
< \infty.
\]
Where $ W^{k,p}_{\text{loc}} $ denotes the set of local Sobolev functions.
\end{definition}
Suppose now that $M$ is a $\mathcal{C}^k$ manifold such that there is a compact set $K \subset M$ and a diffeomorphism $M \setminus K \cong \mathbb{R}^n \setminus B$. The $W^{k,p}_{-q}$ norm on $M$ is defined by choosing an atlas for $M$ that consists of the diffeomorphism $M \setminus K \cong \mathbb{R}^n \setminus B$ and finitely many precompact charts, and then summing the $W^{k,p}_{-q}(\mathbb{R}^n \setminus B)$ norm on the noncompact chart and the $W^{k,p}$ norms on the precompact charts. The resulting space $W^{k,p}_{-q}(M)$ and its topology only depend on the diffeomorphism $M \setminus K \cong \mathbb{R}^n \setminus B$.

Similarly, we can define a Hölder norm.
\begin{definition}
    \noindent Let $B$ be a closed ball in $\mathbb{R}^n$ with center at the origin. For every $k \in \{0, 1, \dots \}$, $\alpha \in (0, 1)$, and $q \in \mathbb{R}$, we define the weighted Hölder space $\mathcal{C}^{k,\alpha}_{-q}(\mathbb{R}^n \setminus B)$ as the collection of those $f \in \mathcal{C}^{k,\alpha}_{\text{loc}}(\mathbb{R}^n \setminus B)$ such that
\[
\|f\|_{\mathcal{C}^{k,\alpha}_{-q}(\mathbb{R}^n \setminus B)} := \sum_{|I| \leq k} \sup_{x} \left( |x|^{|I|+q} |\partial^I f(x)| \right) + \sum_{|I| = k} \left[ |x|^{\alpha+|I|+q} \partial^I f(x) \right]_\alpha < \infty,
\]
where $[\cdot]_\alpha$ denotes the Hölder seminorm and $\mathcal{C}^{k,\alpha}_{\text{loc}} $ the set of locally Hölder continuous functions.
\end{definition}
Suppose now that $M$ is a $\mathcal{C}^k$ manifold such that there is a compact set $K \subset M$ and a diffeomorphism $M \setminus K \cong \mathbb{R}^n \setminus B$. The space $\mathcal{C}^{k,\alpha}_{-q}(M)$ can then be defined just as we did for $W^{k,p}_{-q}(M)$ in the preceding definition.
\begin{definition}\label{AFtype}
     Let $ l\geq 3$ be an integer, we say that an initial data set $(M, g, K)$ is \emph{asymptotically flat} of type $(l,p, q, q_0, \alpha)$ if $g \in \mathcal{C}^{l,\alpha}_{\text{loc}}(M)$, $k \in \mathcal{C}^{l-1,\alpha}_{\text{loc}}(M)$, and if there is a compact set $K \subset M$ and a $\mathcal{C}^{l+1,\alpha}$  diffeomorphism $M \setminus K \cong \mathbb{R}^n \setminus B$ for some closed. ball $B \subset \mathbb{R}^n$ such that
\[
(g - \delta, k) \in W^{l,p}_{-q}(M) \times W^{l-1,p}_{-1-q}(M)
\]
where $\delta$ is a smooth symmetric $(0,2)$-tensor that coincides with the Euclidean inner product on $M \setminus K \cong \mathbb{R}^n \setminus B$, and such that
\[
\mu, J \in \mathcal{C}^{0,\alpha}_{-n-q_0}(M).
\]
\end{definition}
Corvino and Schoen showed in  \cite[Theorem 1]{CorvinoSchoen} that initial data sets with harmonic asymptotics are dense in the set of solutions of the set of asymptotically flat  vacuum  solutions of the constrained equations. This result was later generalized to solutions of the Einstein constraint equations under the dominant energy condition by  Eichmair,  Huang and Lee in \cite[Theorem 18]{eichmair2015spacetime}, this result can be stated assuming higher regularity like in Definition \ref{AFtype} (the proof of the result is basically unchanged).
\begin{theorem}\label{densitytheo}
    Let $(M, g, \pi)$ be an $n$-dimensional asymptotically flat initial data set of type $(l, p, q, q_0, \alpha)$ with $n\geq3$, $p>n$, $q_0>0$,  $\alpha \in (0 ,1-n/p]  $ and $q \in ((n-2)/2,n-2)$  such that the dominant energy condition $\mu \geq |J|_g$ holds. Let $\epsilon > 0$. There are asymptotically flat initial data $(\bar{g}, \bar{\pi})$ with harmonic asymptotics and of type $(l,p, q, q_0, \alpha)$ on $M$ such that
\[
\|g - \bar{g}\|_{W^{l,p}_{-q}} < \epsilon, \quad \|\pi - \bar{\pi}\|_{W^{l-1,p}_{-1-q}} < \epsilon, \quad |E - \bar{E}| < \epsilon, \quad |p_{ADM} - \bar{p}_{ADM}| < \epsilon,
\]
 where $E$ and $p_{ADM}$ denote the ADM energy and momentum,  and such that the strict dominant energy condition holds: $\bar{\mu} > |\bar{J}|_{\bar{g}}$.
\end{theorem}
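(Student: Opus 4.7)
The plan is to adapt the conformal deformation strategy of Corvino--Schoen \cite{CorvinoSchoen} to the non-vacuum constraints satisfying the DEC, following Eichmair--Huang--Lee \cite{eichmair2015spacetime}. The proof proceeds in three stages: (i) a cutoff-and-glue that replaces $(g,\pi)$ outside a large ball $B_R$ by a harmonic-asymptotics model constructed from its own ADM expansion; (ii) a coupled conformal/vector-field correction that restores the constraints via an implicit function theorem; (iii) a small compactly supported perturbation of the prescribed energy density that upgrades DEC to strict DEC. Overall smallness of the approximation is controlled by taking $R$ large and the perturbation in step (iii) small.

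First I would fix $R \gg 1$ and a smooth cutoff $\chi_R$ with $\chi_R = 0$ on $B_R$ and $\chi_R = 1$ outside $B_{2R}$, and set $u_R := 1 + \tfrac{E}{2|x|}$, $X_R^i := -2\,p_{ADM}^i/|x|$. Define
\begin{equation*}
\tilde g_R := (1-\chi_R)\, g + \chi_R\, u_R^4\, \delta,
\qquad
\tilde \pi_R := (1-\chi_R)\, \pi + \chi_R\, u_R^2\, \mathcal{L} X_R.
\end{equation*}
Then $(\tilde g_R, \tilde \pi_R)$ has harmonic asymptotics by construction, lies $W^{l,p}_{-q}\times W^{l-1,p}_{-1-q}$-close to $(g,\pi)$ since both pieces are controlled by the common asymptotic expansion, and produces constraint violations $(\tilde \mu_R - \mu, \tilde J_R - J)$ supported in the annulus $\{R < |x| < 2R\}$ with $\mathcal{C}^{0,\alpha}_{-n-q_0}$-norm vanishing as $R \to \infty$.

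Next, seek the corrected data in the conformal form
\begin{equation*}
\bar g = \phi^{4/(n-2)}\, \tilde g_R, \qquad \bar \pi = \phi^{-2/(n-2)}\bigl(\tilde \pi_R + \mathcal{L}_{\tilde g_R} Y\bigr),
\end{equation*}
with $\phi \to 1$ and $Y \to 0$ at infinity, an ansatz which preserves the form of harmonic asymptotics. Substituting into the Einstein constraints yields a coupled Lichnerowicz equation for $\phi$ and a momentum constraint for $Y$ whose target energy--momentum $(\bar \mu, \bar J)$ is prescribed close to $(\mu, J)$. Linearizing at $(\phi,Y) = (1,0)$ gives, to leading order, the weighted Laplacian on $\phi$ and the conformal Killing Laplacian on $Y$; both are isomorphisms $W^{l,p}_{-q} \to W^{l-2,p}_{-2-q}$ for $q \in ((n-2)/2, n-2)$ by standard weighted elliptic theory on asymptotically flat manifolds. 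The off-diagonal coupling (through $\tr \tilde \pi_R$ and $|\tilde \pi_R|^2$) has small weighted norm thanks to the decay of $\tilde \pi_R$, so an implicit-function/contraction argument delivers a unique small $(\phi, Y)$. The resulting $(\bar g, \bar \pi)$ satisfies the constraints, retains harmonic asymptotics, and its ADM energy and momentum differ from $(E, p_{ADM})$ by $\mathcal{O}(R^{-q'})$ for some $q'>0$.

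The hard part is precisely the uniform invertibility of this coupled linearized system in the weighted spaces: the $\tr \tilde \pi_R$ term prevents the textbook decoupling available for maximal slices, and one must use the smallness of $\tilde \pi_R$ in the off-diagonal entries to close the fixed-point argument uniformly in $R$; this is the technical core of \cite{eichmair2015spacetime}. To upgrade DEC to \emph{strict} DEC, I would rerun step (ii) with the prescribed $\bar \mu$ replaced by $\bar \mu + \delta \rho$ for a fixed positive, compactly supported $\rho$ and a small $\delta > 0$: then $\bar \mu - |\bar J|_{\bar g} \geq \delta \rho - \mathcal{O}(\delta^2) > 0$ on $\mathrm{supp}\,\rho$ and equals the original nonnegative difference elsewhere, yielding strictness globally. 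Choosing first $R$ large and then $\delta$ small gives all four $\varepsilon$-bounds in the statement.
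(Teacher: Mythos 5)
The paper does not actually prove Theorem \ref{densitytheo}: it is imported from the literature, namely Corvino--Schoen \cite{CorvinoSchoen} in the vacuum case and \cite[Theorem 18]{eichmair2015spacetime} under the dominant energy condition, with only the remark that restating it with the regularity of Definition \ref{AFtype} leaves the proof essentially unchanged. Your sketch mirrors the broad strategy of that cited proof (cut off at a large scale, correct the constraints by a conformal factor and a vector field via an implicit function theorem in weighted spaces, then modify to gain strictness), and gluing to the explicit model $u_R^4\delta$, $u_R^2\mathcal{L}X_R$ rather than to flat data is a plausible variant; but as a self-contained argument it has genuine gaps.

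The step that fails outright is the strict-DEC upgrade. Prescribing $\bar\mu+\delta\rho$ with $\rho\ge 0$ compactly supported cannot give $\bar\mu>|\bar J|_{\bar g}$ globally: if, say, the original data are vacuum outside a compact set, then off $\operatorname{supp}\rho$ the corrected data have prescribed densities equal to the old ones, so $\bar\mu-|\bar J|_{\bar g}=0$ near infinity and strictness fails no matter how $\delta$ and $\rho$ are chosen; moreover your claim that the difference ``equals the original nonnegative difference elsewhere'' ignores that $|\bar J|_{\bar g}$ is computed in the deformed metric, so where $\mu=|J|_g$ with $J\neq 0$ the quantity can even become negative. In \cite{eichmair2015spacetime} strictness is achieved by a global modification built into the prescribed targets, with decay compatible with $\mathcal{C}^{0,\alpha}_{-n-q_0}$ and with the metric-induced change of $|J|$ absorbed quantitatively; a compact bump cannot do this. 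The second substantive gap is the assertion that the ADM quantities of the corrected data differ from $(E,p_{ADM})$ by $\mathcal{O}(R^{-q'})$. For $q<n-2$ the ADM energy--momentum is not continuous with respect to the $W^{l,p}_{-q}\times W^{l-1,p}_{-1-q}$ topology in which all of your smallness is measured, so this does not follow from the implicit function theorem; one must extract the $|x|^{2-n}$ coefficients of $\phi-1$ and $Y$ from the equations themselves (via the divergence structure of the constraint operator and integrability of the error terms), and this is one of the delicate points of the Corvino--Schoen/Eichmair--Huang--Lee argument, not a remark. Finally, a smaller inaccuracy: the constraint violation of the glued data is not small in $\mathcal{C}^{0,\alpha}_{-n-q_0}$ (on the gluing annulus the error is of size roughly $R^{-q-2}$, so after weighting by $|x|^{n+q_0}$ it is of order $R^{\,n+q_0-q-2}\to\infty$); it is small only in the weaker $W^{l-2,p}_{-2-q}$-type norms used in the fixed-point argument, and one must separately arrange that the final $(\bar\mu,\bar J)$ has the $\mathcal{C}^{0,\alpha}_{-n-q_0}$ decay required for the data to be of type $(l,p,q,q_0,\alpha)$.
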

Note that in particular, we have also a density result in the Hölder norm by the following result which can be found in \cite[Theorem A.25]{lee2021geometric}:
\begin{theorem}\label{sobolevemb}(Weighted Sobolev embeddings)
    Let $(M, g)$ be a $n$-dimensional  asymptotically flat manifold. Let $j$ be a positive integer, $p, q \geq 1$, and $s \in \mathbb{R}$. If $j - \frac{n}{p} < 0$ and $q \leq \frac{np}{n - jp}$, then $W^{j,p}_s(M) \subset L^q_s(M)$, and there exists a constant $C$ such that for all $u \in W^{j,p}_s$,
\[
\|u\|_{L^q_s} \leq C \|u\|_{W^{j,p}_s}.
\]
If $j - \frac{n}{p} \geq \alpha$ and $\alpha \in (0, 1)$, then $W^{j,p}_s(M) \subset \mathcal{C}^{0,\alpha}_s(M)$, and there exists a constant $C$ such that for all $u \in W^{j,p}_s$,
\[
\|u\|_{\mathcal{C}^{0,\alpha}_s} \leq C \|u\|_{W^{j,p}_s}.
\]
\end{theorem}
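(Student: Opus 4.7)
The plan is to reduce both weighted embeddings to their classical (unweighted) counterparts on a unit annulus by a standard dyadic--rescaling argument, a technique that is by now folklore for weighted Sobolev spaces on asymptotically Euclidean manifolds. The only things one has to track carefully are how the weights scale and how local estimates on annuli assemble into a global estimate.

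The first step is to choose an exhaustion of the asymptotic end by dyadic annuli
\[
A_k \;=\; \{x \in \mathbb{R}^n \setminus B : 2^k \le |x| \le 2^{k+1}\},
\]
for all sufficiently large $k$, together with a fixed precompact chart cover of the compact piece of $M$. On the compact piece the weighted and unweighted norms are equivalent and the classical Sobolev/Morrey embedding gives the desired local estimate without effort, so all the work lies on the end. On each $A_k$ I would pull back via the homothety $y = 2^{-k}x$, setting $\tilde f(y) := f(2^k y)$ on the fixed unit annulus $\tilde A := A_0$. A direct change of variables shows
\[
\|f\|_{W^{j,p}_{-q}(A_k)} \;=\; 2^{-k q}\,\|\tilde f\|_{W^{j,p}(\tilde A)},
\qquad
\|f\|_{L^{r}_{-q}(A_k)} \;=\; 2^{-k q}\,\|\tilde f\|_{L^{r}(\tilde A)},
\]
and an analogous scale identity for the weighted Hölder seminorm, where the weight power is designed precisely to absorb the Jacobian $2^{kn}$ and the derivative factors $2^{k|I|}$.

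Given this scaling, the second step is to invoke the classical unweighted Sobolev embedding $W^{j,p}(\tilde A) \hookrightarrow L^{r}(\tilde A)$ (resp.\ the Morrey embedding $W^{j,p}(\tilde A) \hookrightarrow \mathcal C^{0,\alpha}(\tilde A)$) on the fixed unit annulus with a constant independent of $k$, and then transport the estimate back to $A_k$. The weight powers on the two sides match exactly, so one obtains
\[
\|f\|_{L^{r}_{-q}(A_k)} \;\le\; C\,\|f\|_{W^{j,p}_{-q}(A_k)},
\qquad
\|f\|_{\mathcal C^{0,\alpha}_{-q}(A_k)} \;\le\; C\,\|f\|_{W^{j,p}_{-q}(A_k)},
\]
with a constant $C$ that is uniform in $k$.

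The third step is to pass from the annulus-wise estimates to a global one; this is where the only real bookkeeping lies. For the $L^{r}$ statement, when $r \ge p$ I use the continuous inclusion $\ell^{p} \hookrightarrow \ell^{r}$ on the sequence $\bigl(\|f\|_{L^{r}_{-q}(A_k)}\bigr)_k$ to get
\[
\|f\|_{L^{r}_{-q}(M)} \;\le\; \Bigl(\sum_k \|f\|_{L^{r}_{-q}(A_k)}^p\Bigr)^{\!1/p}
\;\le\; C\,\Bigl(\sum_k \|f\|_{W^{j,p}_{-q}(A_k)}^p\Bigr)^{\!1/p}
\;=\; C\,\|f\|_{W^{j,p}_{-q}(M)},
\]
and then add the (trivial) contribution from the compact part. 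For the Hölder estimate, the derivative portion follows the same pattern, but I will need an extra argument for the seminorm on pairs of points $x_1, x_2$ lying in different annuli: if $|x_1 - x_2|$ is small compared with $\min(|x_1|,|x_2|)$, then $x_1$ and $x_2$ belong to adjacent dyadic shells and the local estimate above suffices; otherwise one simply bounds $|f(x_1) - f(x_2)|$ by $|f(x_1)| + |f(x_2)|$, each term controlled by the weighted $\mathcal C^{0}$ norm obtained in the previous step, and absorbs $|x_1 - x_2|^{-\alpha}$ into the weights. This cross-annulus case is the only genuine technical nuisance, but it is handled by the triangle inequality and the fact that $|x_1|$, $|x_2|$, $|x_1 - x_2|$ are all mutually comparable in that regime. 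With these three steps in place, both inclusions and the claimed continuity follow.
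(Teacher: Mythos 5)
First, a point of comparison: the paper does not prove this statement at all — Theorem \ref{sobolevemb} is imported verbatim, with citation, from Lee's book (\cite[Theorem A.25]{lee2021geometric}) and used as a black box. So there is no in-paper proof to measure yours against; what you have written is, in essence, the standard dyadic-rescaling proof of Bartnik-type weighted embeddings, and its skeleton (scale-invariant measure $|x|^{-n}\,dx$ makes the annulus constants uniform, classical Sobolev/Morrey on a fixed unit annulus, reassembly over shells plus the compact core) is the right one. Two small inaccuracies are harmless: with the paper's convention the weight in $W^{j,p}_{-q}$ is $|x|^{|I|+q}$, so the scaling factor on the $k$-th annulus is $2^{+kq}$, not $2^{-kq}$ (it cancels anyway since the same factor appears on both sides); and the cross-annulus H\"older pairs are most cleanly treated by running the local estimate on overlapping shells $\{2^{k-1}\le|x|\le 2^{k+2}\}$, which is what your ``adjacent shells'' remark implicitly requires, since the single-shell estimate does not by itself cover pairs straddling a shell boundary.

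The one substantive gap is in your third step for the Lebesgue embedding: your reassembly uses $\ell^{p}\hookrightarrow\ell^{r}$ and therefore genuinely needs the target exponent to satisfy $r\ge p$, whereas the statement as reproduced in the paper only assumes $1\le q\le \frac{np}{n-jp}$ and thus formally allows $q<p$. This is not a removable technicality: with the \emph{same} weight $s$ on both sides the embedding fails for $q<p$. Indeed, take $f=\sum_{k=1}^{N}2^{ks}\chi(2^{-k}x)$ with $\chi$ a fixed bump supported in the unit annulus; each summand has weighted $W^{j,p}_s$- and $L^q_s$-norms comparable to $1$, so $\|f\|_{L^q_s}\approx N^{1/q}$ while $\|f\|_{W^{j,p}_s}\approx N^{1/p}$, and no uniform constant exists when $q<p$. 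So the hypothesis $p\le q$ is implicitly part of the correct statement (it appears in the standard references), and your proof should state it explicitly; once it is added, your argument closes. The H\"older half and the compact-core contribution are fine as sketched.
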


\vspace{1.5 cm}

 \paragraph*{\emph{Acknowledgements.}} The Author would like to thank Jan Metzger, Marc Mars, Rodrigo Avalos and Thomas Koerber for the helpful comments and discussions about this work. The author also thanks the anonymous referee for  helpful and constructive comments that improved this paper. This research received partial support from the DFG as part of the SPP 2026 Geometry at infinity, project ME 3816/3-1.

\bibliographystyle{amsplain}
\bibliography{Lit_new}

@incollection {Chriyau,
    AUTHOR = {Christodoulou, Demetrios and Yau, Shing-Tung},
     TITLE = {Some remarks on the quasi-local mass},
 BOOKTITLE = {Mathematics and general relativity ({S}anta {C}ruz, {CA},
              1986)},
    SERIES = {Contemp. Math.},
    VOLUME = {71},
     PAGES = {9--14},
 PUBLISHER = {Amer. Math. Soc., Providence, RI},
      YEAR = {1988},
   MRCLASS = {83C99 (58E12)},
  MRNUMBER = {954405},
MRREVIEWER = {K. P. Tod},
       DOI = {10.1090/conm/071/954405},
       URL = {https://doi.org/10.1090/conm/071/954405},
}

@article {willflat,
    AUTHOR = {Lamm, Tobias and Metzger, Jan and Schulze, Felix},
     TITLE = {Foliations of asymptotically flat manifolds by surfaces of
              {W}illmore type},
   JOURNAL = {Math. Ann.},
  FJOURNAL = {Mathematische Annalen},
    VOLUME = {350},
      YEAR = {2011},
    NUMBER = {1},
     PAGES = {1--78},
      ISSN = {0025-5831},
   MRCLASS = {53C12 (53C24)},
  MRNUMBER = {2785762},
MRREVIEWER = {Jesse Ratzkin},
       DOI = {10.1007/s00208-010-0550-2},
       URL = {https://doi.org/10.1007/s00208-010-0550-2},
}

@article {Friedrich2020,
title={Minimizers of generalized Willmore energies and applications in general relativity},
author={Friedrich, Alexander},
JOURNAL = {Doctoral Thesis Universität Potsdam},
  doi       = {10.25932/publishup-48142},
  year        = {2020},
}

@article{Alex,
title = {Concentration of small Hawking type surfaces},
journal = {Differential Geometry and its Applications},
volume = {85},
pages = {101927},
year = {2022},
issn = {0926-2245},
doi = {https://doi.org/10.1016/j.difgeo.2022.101927},
url = {https://www.sciencedirect.com/science/article/pii/S0926224522000808},
author={Friedrich, Alexander},
}

@article {Living,
    AUTHOR = {Szabados,   László Benő},
     TITLE = {Quasi-Local Energy-Momentum and Angular Momentum in GR},
   JOURNAL = {Living Rev. Relativity},
    VOLUME = {7},
      YEAR = {2004},
    NUMBER = {4},
}

@article {STCMC,
    AUTHOR = {Cederbaum, Carla and Sakovich, Anna},
     TITLE = {On center of mass and foliations by constant spacetime mean
              curvature surfaces for isolated systems in {G}eneral
              {R}elativity},
   JOURNAL = {Calc. Var. Partial Differential Equations},
  FJOURNAL = {Calculus of Variations and Partial Differential Equations},
    VOLUME = {60},
      YEAR = {2021},
    NUMBER = {6},
     PAGES = {Paper No. 214},
      ISSN = {0944-2669},
   MRCLASS = {53C21 (58J37 83C05)},
  MRNUMBER = {4305436},
       DOI = {10.1007/s00526-021-02060-z},
       URL = {https://doi.org/10.1007/s00526-021-02060-z},
}

@article {Nerz,
    AUTHOR = {Nerz, Christopher},
     TITLE = {Foliations by spheres with constant expansion for isolated
              systems without asymptotic symmetry},
   JOURNAL = {J. Differential Geom.},
  FJOURNAL = {Journal of Differential Geometry},
    VOLUME = {109},
      YEAR = {2018},
    NUMBER = {2},
     PAGES = {257--289},
      ISSN = {0022-040X},
   MRCLASS = {53C12 (53C20 53C42 83C05)},
  MRNUMBER = {3807320},
MRREVIEWER = {A. Burtscher},
       DOI = {10.4310/jdg/1527040873},
       URL = {https://doi.org/10.4310/jdg/1527040873},
}

@article {Nerz2,
    AUTHOR = {Nerz, Christopher},
     TITLE = {Foliations by stable spheres with constant mean curvature for
              isolated systems without asymptotic symmetry},
   JOURNAL = {Calc. Var. Partial Differential Equations},
  FJOURNAL = {Calculus of Variations and Partial Differential Equations},
    VOLUME = {54},
      YEAR = {2015},
    NUMBER = {2},
     PAGES = {1911--1946},
      ISSN = {0944-2669},
   MRCLASS = {53C42 (53C12 53C20)},
  MRNUMBER = {3396437},
MRREVIEWER = {Adela-Gabriela Mihai},
       DOI = {10.1007/s00526-015-0849-7},
       URL = {https://doi.org/10.1007/s00526-015-0849-7},
}

@article {HY,
    AUTHOR = {Huisken, Gerhard and Yau, Shing-Tung},
     TITLE = {Definition of center of mass for isolated physical systems and
              unique foliations by stable spheres with constant mean
              curvature},
   JOURNAL = {Invent. Math.},
  FJOURNAL = {Inventiones Mathematicae},
    VOLUME = {124},
      YEAR = {1996},
    NUMBER = {1-3},
     PAGES = {281--311},
      ISSN = {0020-9910},
   MRCLASS = {53C20 (83C99)},
  MRNUMBER = {1369419},
MRREVIEWER = {Alan D. Rendall},
       DOI = {10.1007/s002220050054},
       URL = {https://doi.org/10.1007/s002220050054},
}

@article {Thomas,
    AUTHOR = {Koerber, Thomas},
     TITLE = {The area preserving {W}illmore flow and local maximizers of
              the {H}awking mass in asymptotically {S}chwarzschild
              manifolds},
   JOURNAL = {J. Geom. Anal.},
  FJOURNAL = {Journal of Geometric Analysis},
    VOLUME = {31},
      YEAR = {2021},
    NUMBER = {4},
     PAGES = {3455--3497},
      ISSN = {1050-6926},
   MRCLASS = {53E10 (35K25 53C80)},
  MRNUMBER = {4236532},
MRREVIEWER = {Yannan Liu},
       DOI = {10.1007/s12220-020-00401-6},
       URL = {https://doi.org/10.1007/s12220-020-00401-6},
}

@article{Haw,
	year = 1982,
	volume = {381},
	number = {1780 },
	pages = {215–224},
	author = {Horowitz, Gary T.
and  Schmidt, Bernd Gerhard},
	title = {Note on gravitational energy},
	journal = {Proceedings of the Royal Society of London. Series A, Mathematical and Physical Sciences },
}

@article {Hawma,
    AUTHOR = {Hawking, Stephen W.},
     TITLE = {Gravitational radiation in an expanding universe},
   JOURNAL = {J. Mathematical Phys.},
  FJOURNAL = {Journal of Mathematical Physics},
    VOLUME = {9},
      YEAR = {1968},
    NUMBER = {4},
     PAGES = {598--604},
      ISSN = {0022-2488},
   MRCLASS = {83C30},
  MRNUMBER = {3960907},
       DOI = {10.1063/1.1664615},
       URL = {https://doi.org/10.1063/1.1664615},
}

@article {willcen,
    AUTHOR = {Eichmair, Michael and Koerber, Thomas},
     TITLE = {The {W}illmore {C}enter of {M}ass of {I}nitial {D}ata {S}ets},
   JOURNAL = {Comm. Math. Phys.},
  FJOURNAL = {Communications in Mathematical Physics},
    VOLUME = {392},
      YEAR = {2022},
    NUMBER = {2},
     PAGES = {483--516},
      ISSN = {0010-3616},
   MRCLASS = {53C42 (83C30 83C40)},
  MRNUMBER = {4426319},
       DOI = {10.1007/s00220-022-04349-2},
       URL = {https://doi.org/10.1007/s00220-022-04349-2},
}

@article{eichko,
  title={Large area-constrained Willmore surfaces in asymptotically Schwarzschild $3 $-manifolds},
  author={Eichmair, Michael and Koerber, Thomas},
  journal={Journal of Differential Geometry},
  volume={127},
  number={1},
  pages={105--160},
  year={2024},
  publisher={Lehigh University}
}

@article{beigmurch,
title = {The {P}oincaré group as the symmetry group of canonical general relativity},
journal = {Annals of Physics},
volume = {174},
number = {2},
pages = {463-498},
year = {1987},
issn = {0003-4916},
doi = {https://doi.org/10.1016/0003-4916(87)90037-6},
url = {https://www.sciencedirect.com/science/article/pii/0003491687900376},
author = { Beig, Robert and  Ó Murchadha,  Niall},
}

@article{main1local,
title = {Local space time constant mean curvature and constant expansion foliations},
journal = {Journal of Geometry and Physics},
volume = {188},
pages = {104823},
year = {2023},
issn = {0393-0440},
doi = {https://doi.org/10.1016/j.geomphys.2023.104823},
url = {https://www.sciencedirect.com/science/article/pii/S039304402300075X},
author = {Metzger, Jan and Peñuela Diaz, Alejandro },
}

@article{diaz2023local,
  title={Local foliations by critical surfaces of the Hawking energy and small sphere limit},
  author={Peñuela Diaz, Alejandro },
  journal={Classical and Quantum Gravity},
  volume={40},
  number={3},
  pages={035002},
  year={2023},
  publisher={IOP Publishing}
}

@article{CorvinoSchoen,
author = {Corvino, Justin and Schoen, Richard M. },
title = {{On the Asymptotics for the Vacuum Einstein Constraint Equations}},
volume = {73},
journal = {Journal of Differential Geometry},
number = {2},
publisher = {Lehigh University},
pages = {185 -- 217},
year = {2006},
doi = {10.4310/jdg/1146169910},
URL = {https://doi.org/10.4310/jdg/1146169910}
}

@article{eichmair2015spacetime,
  title={The spacetime positive mass theorem in dimensions less than eight},
  author={Eichmair, Michael and Huang, Lan-Hsuan and Lee, Dan A and Schoen, Richard},
  journal={Journal of the European Mathematical Society},
  volume={18},
  number={1},
  pages={83--121},
  year={2015}
}

@book{lee2021geometric,
  title={Geometric relativity},
  author={Lee, Dan A},
  volume={201},
  year={2021},
  publisher={American Mathematical Society}
}

@article{shi2016isoperimetric,
  title={The isoperimetric inequality on asymptotically flat manifolds with nonnegative scalar curvature},
  author={Shi, Yuguang},
  journal={International Mathematics Research Notices},
  volume={2016},
  number={22},
  pages={7038--7050},
  year={2016},
  publisher={Oxford University Press}
}

@inproceedings{york1979kinematics,
  title={Kinematics and dynamics of general relativity},
  author={York Jr, James W },
  booktitle={Sources of gravitational radiation  (L. Smarr, ed.)},
  pages={83--126},
  year={1978},
  publisher={Cambridge University
Press}
}

@article{kuwert2001willmore,
  title={The Willmore flow with small initial energy},
  author={Kuwert, Ernst and Sch{\"a}tzle, Reiner},
  journal={Journal of Differential Geometry},
  volume={57},
  number={3},
  pages={409--441},
  year={2001},
  publisher={Lehigh University}
}

@article{regge1974role,
  title={Role of surface integrals in the Hamiltonian formulation of general relativity},
  author={Regge, Tullio and Teitelboim, Claudio},
  journal={Annals of physics},
  volume={88},
  number={1},
  pages={286--318},
  year={1974},
  publisher={Elsevier}
}

@article{bray2015time,
  title={Time flat surfaces and the monotonicity of the spacetime Hawking mass},
  author={Bray, Hubert L and Jauregui, Jeffrey L},
  journal={Communications in Mathematical Physics},
  volume={335},
  pages={285--307},
  year={2015},
  publisher={Springer}
}

@inproceedings{bray2016time,
  title={Time flat surfaces and the monotonicity of the spacetime Hawking mass II},
  author={Bray, Hubert L and Jauregui, Jeffrey L. and Mars, Marc},
  booktitle={Annales Henri Poincar{\'e}},
  volume={17},
  number={6},
  pages={1457--1475},
  year={2016},
  organization={Springer}
}

@article{hirsch2022hawking,
  title={Hawking mass monotonicity for initial data sets},
  author={Hirsch, Sven},
  journal={arXiv preprint arXiv:2210.12237},
  year={2022}
}

@article{huisken2001inverse,
  title={The inverse mean curvature flow and the Riemannian Penrose inequality},
  author={Huisken, Gerhard and Ilmanen, Tom},
  journal={Journal of Differential Geometry},
  volume={59},
  number={3},
  pages={353--437},
  year={2001},
  publisher={Lehigh University}
}

@article{miao2016evaluation,
  title={Evaluation of the ADM mass and center of mass via the Ricci tensor},
  author={Miao, Pengzi and Tam, Luen-Fai},
  journal={Proceedings of the American Mathematical Society},
  volume={144},
  number={2},
  pages={753--761},
  year={2016}
}

@inproceedings{herzlich2016computing,
  title={Computing asymptotic invariants with the Ricci tensor on asymptotically flat and asymptotically hyperbolic manifolds},
  author={Herzlich, Marc},
  booktitle={Annales Henri Poincar{\'e}},
  volume={17},
  number={12},
  pages={3605--3617},
  year={2016},
  organization={Springer}
}

@phdthesis{PenuelaThesis,
  author       = {Peñuela Diaz, Alejandro},
  title        = {Geometrically defined surfaces in general
relativity and their relation to physical
invariants},
  school       = {Universit{\"a}t Potsdam},
  year         = {2025},  
}

@article{rigidiaz,
  title={Rigidity and positivity of Hawking quasi-local energy on area-constrained critical surfaces},
  author={Pe{\~n}uela Diaz, Alejandro},
  journal={arXiv preprint arXiv:2507.16588},
  year={2025}
}

@article{de2005optimal,
  title={Optimal rigidity estimates for nearly umbilical surfaces},
  author={De Lellis, Camillo and M{\"u}ller, Stefan},
  journal={Journal of Differential Geometry},
  volume={69},
  number={1},
  pages={075--110},
  year={2005},
  publisher={Lehigh University}
}

@article{de2006ac,
  title={AC 0 estimate for nearly umbilical surfaces},
  author={De Lellis, Camillo and M{\"u}ller, Stefan},
  journal={Calculus of Variations and Partial Differential Equations},
  volume={26},
  number={3},
  pages={283--296},
  year={2006},
  publisher={Springer}
}

@article{eichmair2024huisken,
  title={Huisken--Yau-type uniqueness for area-constrained Willmore spheres},
  author={Eichmair, Michael and Koerber, Thomas and Metzger, Jan and Schulze, Felix},
  journal={Duke Mathematical Journal},
  volume={173},
  number={9},
  pages={1677--1730},
  year={2024},
  publisher={Duke University Press}
}

@article{ashtekar1978unified,
  title={A unified treatment of null and spatial infinity in general relativity. I. Universal structure, asymptotic symmetries, and conserved quantities at spatial infinity},
  author={Ashtekar, Abhay and Hansen, Richard O},
  journal={Journal of Mathematical Physics},
  volume={19},
  number={7},
  pages={1542--1566},
  year={1978},
  publisher={American Institute of Physics}
}

@article{chrusciel1986remark,
  title={A remark on the positive-energy theorem},
  author={Chrusciel, Piotr T},
  journal={Classical and Quantum Gravity},
  volume={3},
  number={6},
  pages={L115},
  year={1986},
  publisher={IOP Publishing}
}
\end{document}